\def\eqalign#1{\null\vcenter{\def\\{\cr}\openup\jot\m@th
  \ialign{\strut$\displaystyle{##}$\hfil&$\displaystyle{{}##}$\hfil
      \crcr#1\crcr}}\,}
\newcommand{\be}{\begin{equation}}
\newcommand{\ee}{\end{equation}}
\newcommand{\beq}{\begin{eqnarray}}
\newcommand{\eeq}{\end{eqnarray}}
\newcommand{\bt}{\beta}
\newcommand{\bl}{\begin{lemma}}
\newcommand{\el}{\end{lemma}}
\newcommand{\bm}{\begin{pmatrix}}
\renewcommand{\em}{\end{pmatrix}}
\newcommand{\bml}{\begin{multline}}
\newcommand{\eml}{\end{multline}}
\newcommand{\ba}{\begin{array}}
\newcommand{\ea}{\end{array}}
\newcommand{\la}{\label}
\newcommand{\ci}{\cite}
\newcommand{\de}{\delta}
\newcommand{\De}{\Delta}
\newcommand{\al}{\alpha}
\newcommand{\ga}{\gamma}
\newcommand{\Ga}{\Gamma}
\newcommand{\si}{\sigma}
\newcommand{\Si}{\Sigma}
\newcommand{\om}{\omega}
\newcommand{\Om}{\Omega}
\newcommand{\lb}{\lambda}
\newcommand{\ze}{\zeta}
\renewcommand{\th}{\theta}
\newcommand{\ep}{\varepsilon }
\newcommand{\bi}{\bibitem}
\newfont{\msbm}{msbm10 scaled\magstep1}
\newfont{\msbms}{msbm7 scaled\magstep1} 
\newcommand{\bbr}{\mbox{$\mbox{\msbm R}$}}
\newcommand{\bbc}{\mbox{$\mbox{\msbm C}$}}
\newcommand{\bbz}{\mbox{$\mbox{\msbm Z}$}}
\newtheorem{theorem}{Theorem}[section]
\newtheorem{lemma}[theorem]{Lemma}
\newtheorem{proposition}[theorem]{Proposition}
\theoremstyle{definition}
\newtheorem{example}[theorem]{Example}
\theoremstyle{remark}
\newtheorem{remark}[theorem]{Remark}
\numberwithin{equation}{section}
\begin{document}
\def\wt{\widetilde}
\title[Toeplitz determinants]{Asymptotics of Toeplitz, Hankel, and Toeplitz+Hankel determinants
with Fisher-Hartwig singularities}
\author{P. Deift}
\address{Courant Institute of Mathematical Sciences, New York, NY, USA}
\author{A. Its}
\address{
Indiana University -- Purdue University  Indianapolis,
Indianapolis, IN, USA}
%
\author{I. Krasovsky}
\address{
Brunel University West London, Uxbridge, United Kingdom, and
Imperial College, London, United Kingdom}
%


\begin{abstract}
We study the asymptotics in $n$ for $n$-dimensional Toeplitz determinants
whose symbols possess Fisher-Hartwig singularities on a smooth background.
We prove the general non-degenerate asymptotic behavior as conjectured by Basor and Tracy.
We also obtain asymptotics of Hankel determinants on a finite interval
as well as determinants of Toeplitz+Hankel type.
Our analysis is based on a study of the related
system of orthogonal polynomials on the unit circle using the Riemann-Hilbert approach.
\end{abstract}

\maketitle

\section{Introduction}
Let $f(z)$ be a complex-valued function integrable over the unit circle with Fourier coefficients
\[
f_j={1\over 2\pi}\int_0^{2\pi}f(e^{i\theta})e^{-i j\theta}d\theta,\qquad j=0,\pm1,\pm2,\dots
\]
We are interested in the $n$-dimensional Toeplitz determinant with symbol $f(z)$,
\be\la{TD}
D_n(f(z))=\det(f_{j-k})_{j,k=0}^{n-1}.
\ee

In this paper we consider the asymptotics of $D_n(f(z))$ as $n\to\infty$
and of the related orthogonal polynomials as well as the asymptotics of
Hankel, and Toeplitz+Hankel determinants
in the case when the symbol $f(e^{i\theta})$ has a fixed number of
Fisher-Hartwig singularities \ci{FH,L}, i.e.,
when $f(e^{i\theta})$ has the following form on the unit circle $C$:
\be\la{fFH}
f(z)=e^{V(z)} z^{\sum_{j=0}^m \bt_j}
\prod_{j=0}^m  |z-z_j|^{2\al_j}g_{z_j,\bt_j}(z)z_j^{-\bt_j},\qquad z=e^{i\th},\qquad
\theta\in[0,2\pi),
\ee
for some $m=0,1,\dots$,
where
\begin{eqnarray}
&z_j=e^{i\th_j},\quad j=0,\dots,m,\qquad
0=\th_0<\th_1<\cdots<\th_m<2\pi;&\la{z}\\
&g_{z_j,\bt_j}(z)\equiv g_{\bt_j}(z)=
\begin{cases}
e^{i\pi\bt_j}& 0\le\arg z<\th_j\cr
e^{-i\pi\bt_j}& \th_j\le\arg z<2\pi
\end{cases},&\la{g}\\
&\Re\al_j>-1/2,\quad \bt_j\in\bbc,\quad j=0,\dots,m,&
\end{eqnarray}
and $V(e^{i\theta})$ is a sufficiently smooth function on the unit
circle (see below). Here the condition on $\al_j$ insures
integrability. Note that a single Fisher-Hartwig singularity at
$z_j$ consists of a root-type singularity \be\la{za}
|z-z_j|^{2\al_j}=\left|2\sin\frac{\th-\th_j}{2}\right|^{2\al_j} \ee
and a jump $g_{\bt_j}(z)$. A point $z_j$, $j=1,\dots,m$ is included
in (\ref{z}) if and only if either $\al_j\neq 0$ or $\bt_j\neq 0$
(or both); in contrast, we always fix $z_0=1$ even if
$\al_0=\bt_0=0$ (note that $g_{\bt_0}(z)=e^{-i\pi\bt_0})$. Observe
that for each $j=1,\dots,m$, $z^{\beta_j} g_{\beta_j}(z)$ is
continuous at $z=1$, and so for each $j$ each ``beta'' singularity
produces a jump only at the point $z_j$. The factors $z_j^{-\bt_j}$
are singled out to simplify comparisons with existing literature.
Indeed, (\ref{fFH}) with the notation $b(\th)=e^{V(e^{i\theta})}$ is
exactly the symbol considered in
\ci{FH,B,B2,BE1,BEsym,BEnonsym,BE4,BS,BS2,BS3,ES,Ehr,W}. We write
the symbol, however, in a form with $z^{\sum_{j=0}^m \bt_j}$
factored out. The present way of writing $f(z)$ is more natural for
our analysis.

A simple example of a symbol of type (\ref{fFH}) is given by
(\ref{example},\ref{tildef}) below. Note that finite order zeros
also give rise to Fisher-Hartwig singularities: for example, if a
sufficiently smooth $f(z)$ has two simple zeros at $0 < \th_1 <
\th_2< 2\pi$, then
$f(z)=e^{V(z)}|z-z_1||z-z_2|g_{z_1,1/2}(z)g_{z_2,-1/2}(z)\left(\frac{z_1}{z_2}
\right)^{-1/2}$ for a suitable $V(z)$.

On the unit circle, $V(z)$ is represented by its Fourier expansion:
\be\la{fourier}
V(z)=\sum_{k=-\infty}^\infty V_k z^k,\qquad
V_k={1\over 2\pi}\int_0^{2\pi}V(e^{i\th})e^{-ki\th}d\th.
\ee

The canonical Wiener-Hopf factorization of $e^{V(z)}$ is
\be\la{WienH}
e^{V(z)}=b_+(z) e^{V_0} b_-(z),\qquad b_+(z)=e^{\sum_{k=1}^\infty V_k z^k},
\qquad b_-(z)=e^{\sum_{k=-\infty}^{-1} V_k z^k}.
\ee

In the case that $\al_j=\bt_j=0$, $f(z)=e^{V(z)}$, the classical strong limit theorem
of Szeg\H o (in its most general form, see, e.g., \cite{Simon}) asserts that as $n\to\infty$,
\be
D_n(f)=\exp\left\{nV_0+\sum_{k=1}^\infty k V_k V_{-k}\right\}(1+o(1)),
\ee
provided $V(z)\in H^{1/2}=\{V=\sum_{k=-\infty}^\infty V_k z^k: \sum_{k=-\infty}^\infty |k||V_k|^2<\infty\}$.

Fisher and Hartwig \cite{FH} were led to single out symbols of type (\ref{fFH})
based on the solution of a variety of specific problems from statistical mechanics,
in particular, the solution of the spontaneous magnetisation problem for the Ising model.
Indeed the square of the magnetisation can be expressed as the limit as $n\to\infty$ of a
Toeplitz determinant $D_n(f)$  (which represents
a 2-spin correlation function at distance $n$ between spins) where the symbol
$f$ is a particular example of (\ref{fFH}) and
has the following properties depending on whether temperature $T$ is lower, equal or higher than
the critical temperature $T_c$:
\begin{itemize}
\item for $T<T_c$, $f$ has no Fisher-Hartwig singularities;
\item for $T=T_c$, $f$ has one singularity at $z_0=1$ with $\al_0=0$, $\bt_0=-1/2$;
\item for $T>T_c$, $f$ has one singularity at $z_0=1$ with $\al_0=0$, $\bt_0=-1$.
\end{itemize}

For $f$ of type (\ref{fFH}), Fisher and Hartwig made a general conjecture in \cite{FH}
about the asymptotic form of $D_n(f)$,
\be\label{FHconj}
D_n(f)\sim E n^\si e^{nV_0},\qquad n\to\infty,
\ee
where $\si=\sum_{j=0}^m (\al_j^2-\bt_j^2)$, and $E$ is a constant depending on $f$.
Considerable effort has been expended in the mathematics and physics communities
in verifying (\ref{FHconj}).

Introduce the seminorm: \be\label{seminorm}
|||\bt|||=\max_{j,k}|\Re\bt_j-\Re\bt_k|, \ee where $1\le j,k\le m$
if $\al_0=\bt_0=0$, and $0\le j,k\le m$ otherwise. If $m=0$, set
$|||\bt|||=0$. Note that in the case of a single singularity, we
always have $|||\bt|||=0$.

The case when $|||\bt|||<1$, i.e., when all $\Re\bt_j$
lie in a single open interval of length 1,
namely $\Re\bt_j\in(q-1/2,q+1/2)$, for an appropriate $q\in\bbr$, has now been essentially settled
(see, however, Remark \ref{smoothness1} below):
In \ci{W}, Widom proved the conjecture when $\Re\al_j>-1/2$, and all
$\bt_j=0$. In \ci{B}, Basor then verified the conjecture when
$\Re\al_j>-1/2$, and $\Re\bt_j=0$.
In \ci{BS},
B\"ottcher and Silbermann established the result in the case that
$|\Re\al_j|<1/2$, $|\Re\bt_j|<1/2$. Finally, in \ci{Ehr},
Ehrhardt verified the conjecture for
$\Re\al_j>-1/2$, $|||\bt|||<1$.
In these papers, the explicit form of $E$ was also established
(see \ci{Ehr} for a review of these and other related results).

\begin{theorem}\la{asTop} (Ehrhardt  \ci{Ehr}).
Let $f(e^{i\theta})$ be defined in (\ref{fFH}), $V(z)$ be $C^\infty$ on the unit circle,
$|||\bt|||<1$, $\Re\al_j>-1/2$, and $\al_j\pm\bt_j\neq -1,-2,\dots$
for $j,k=0,1,\dots,m$. Then as $n\to\infty$,
\begin{multline}\la{asD}
D_n(f)=\exp\left[nV_0+\sum_{k=1}^\infty k V_k V_{-k}\right]
\prod_{j=0}^m b_+(z_j)^{-\al_j+\bt_j}b_-(z_j)^{-\al_j-\bt_j}\\
\times
n^{\sum_{j=0}^m(\al_j^2-\bt_j^2)}\prod_{0\le j<k\le m}
|z_j-z_k|^{2(\bt_j\bt_k-\al_j\al_k)}\left({z_k\over z_j e^{i\pi}}
\right)^{\al_j\bt_k-\al_k\bt_j}\\
\times
\prod_{j=0}^m\frac{G(1+\al_j+\bt_j) G(1+\al_j-\bt_j)}{G(1+2\al_j)}
\left(1+o(1)\right),
\end{multline}
where
$G(x)$ is Barnes' $G$-function. The double product over $j<k$ is set to $1$
if $m=0$.
\end{theorem}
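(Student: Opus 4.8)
The plan is to reduce the determinant to the orthogonal polynomials on the unit circle (OPUC) for the weight $f$ and to analyze those polynomials by the Deift--Zhou steepest descent applied to the Fokas--Its--Kitaev Riemann--Hilbert (RH) problem. Let $\chi_k$ be the leading coefficient of the $k$-th orthonormal polynomial for $f$; then $\chi_k^{-2}=D_{k+1}(f)/D_k(f)$, so
\be
D_n(f)=\prod_{k=0}^{n-1}\chi_k^{-2}.
\ee
Summing these ratios would require asymptotics uniform down to small $k$, which is awkward, so instead I would extract the constant $E$ by a differential identity. Embed $f$ in the family $f_t$, $t\in[0,1]$, with $\al_j(t)=t\al_j$, $\bt_j(t)=t\bt_j$ and $V$ fixed, so that $f_0=e^{V}$ is the smooth Szeg\H o case and $f_1=f$. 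One writes $\partial_t\log D_n(f_t)$ as a contour integral of $\partial_t\log f_t$ against a bilinear expression in the entries of the RH solution $Y$ (a Christoffel--Darboux-type identity, obtained from $\partial_t\log\det=\tr(M^{-1}\dot M)$ and the Gohberg--Semencul form of the inverse Toeplitz matrix); since $V$ does not move, this isolates exactly the singular contributions and reduces everything to the large-$n$ behavior of $Y$ near the $z_j$, followed by integration in $t$ from $0$ (where Szeg\H o's theorem, quoted in the excerpt, supplies the prefactor $\exp[nV_0+\sum_k kV_kV_{-k}]$) to $1$.

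For the steepest descent, the RH problem for $Y$ has jump $\bm 1 & z^{-n}f(z)\\ 0 & 1\em$ on $C$ and normalization $Y(z)z^{-n\si_3}\to I$ at $\infty$. I would carry out the usual chain $Y\mapsto T\mapsto S$: renormalize off the circle, split the jump using the Wiener--Hopf factorization $e^{V}=b_+e^{V_0}b_-$ together with the explicit factorizations of $|z-z_j|^{2\al_j}$ and of $g_{\bt_j}$, and open lenses so that on the lips the jumps are exponentially close to $I$ uniformly away from the $z_j$. Away from all singularities the problem is solved by a global parametrix $N$ built from the Szeg\H o function of $f$: a piecewise-analytic scalar times a constant matrix, whose boundary values at $z_j$ produce the factors $b_+(z_j)^{-\al_j+\bt_j}b_-(z_j)^{-\al_j-\bt_j}$ and, through the interaction of the scalar factors attached to distinct $z_j$, the double product $\prod_{0\le j<k\le m}|z_j-z_k|^{2(\bt_j\bt_k-\al_j\al_k)}(z_k/z_je^{i\pi})^{\al_j\bt_k-\al_k\bt_j}$. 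In a fixed disc around each $z_j$ the jump is genuinely singular; there I would build a local parametrix $P^{(z_j)}$ explicitly out of confluent hypergeometric functions $\psi(a,c;\zeta)$ with parameters governed by $\al_j\pm\bt_j$, after mapping a neighborhood of $z_j$ to a neighborhood of the origin by $\zeta\sim n(\th-\th_j)$; invertibility of this model solution is exactly where the non-degeneracy $\al_j\pm\bt_j\neq-1,-2,\dots$ is used.

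Setting $R=SN^{-1}$ outside the discs and $R=S(P^{(z_j)})^{-1}$ inside each disc, $R$ has no jump across the singular points and only small jumps elsewhere: exponentially small on the lens boundaries, and $I+o(1)$ on the matching circles $|z-z_j|=\de$. Standard small-norm RH theory then yields $R=I+o(1)$ uniformly, with a full asymptotic series in descending powers of $n$. \textbf{This matching is the crux and the main obstacle.} After the common power $z^{\sum\bt_j}$ is factored out, the discrepancy between the confluent hypergeometric parametrices at neighboring singularities is governed by the \textit{differences} $\bt_j-\bt_k$: on the circles it is of order $n^{|\Re\bt_j-\Re\bt_k|-1}$, so the error is $O(n^{|||\bt|||-1})$, which is $o(1)$ precisely because of the hypothesis $|||\bt|||<1$. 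Verifying the matching to the needed order, together with uniformity of all estimates in the parameters, is the technical heart; when $|||\bt|||\ge1$ competing Fisher--Hartwig representations appear and this single-parametrix scheme breaks down.

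Finally I would feed these asymptotics back into the differential identity. Tracing $Y=RN$ (respectively $RP^{(z_j)}$) through the contour integral for $\partial_t\log D_n(f_t)$, the contributions localize at the $z_j$. The leading local term reproduces $\frac{d}{dt}\big[t^2\sum_{j}(\al_j^2-\bt_j^2)\big]\log n$, whose integral over $t\in[0,1]$ gives the power $n^{\sum_j(\al_j^2-\bt_j^2)}$; the boundary-value factors of $N$ integrate to $\prod_j b_+(z_j)^{-\al_j+\bt_j}b_-(z_j)^{-\al_j-\bt_j}$ and to the $j<k$ double product. The genuinely new constant comes from the subleading coefficient of the confluent hypergeometric parametrix, which matches the logarithmic $t$-derivative of $\prod_j G(1+t\al_j+t\bt_j)G(1+t\al_j-t\bt_j)/G(1+2t\al_j)$; integrating from $t=0$ (where $G(1)=1$) to $t=1$ produces exactly $\prod_j G(1+\al_j+\bt_j)G(1+\al_j-\bt_j)/G(1+2\al_j)$. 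Combining all pieces yields (\ref{asD}).
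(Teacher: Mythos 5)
You should first note that this paper does not actually prove Theorem \ref{asTop}: it is quoted as Ehrhardt's result \ci{Ehr}, whose proof is operator-theoretic (Banach-algebra localization techniques in the line of Widom, Basor, and B\"ottcher--Silbermann), and the authors explicitly defer their own independent proof to the companion paper \ci{DIKadd}. That said, your sketch is essentially the Riemann--Hilbert strategy the authors announce there, and all of its ingredients are built in Sections \ref{RHsection}--\ref{RHa} of the present paper: the Fokas--Its--Kitaev problem for $Y$, the outer parametrix $N$ from the Szeg\H o function (\ref{Dl1})--(\ref{Dg1}), the confluent hypergeometric local parametrices of Proposition \ref{Param}, and the $\wt R$-analysis of Section \ref{RRHP}, where the smallness of the jump on $\partial U_{z_j}$ is exactly of order $n^{2\max_j|\Re\bt_j-\om|-1}$ --- your $O(n^{|||\bt|||-1})$ --- so that $|||\bt|||<1$ enters precisely where you say it does. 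Two caveats you should address if you carry this out. First, the linear deformation $\al_j(t)=t\al_j$, $\bt_j(t)=t\bt_j$ can pass through degenerate points even when the endpoint is admissible (e.g.\ $\al_j=0$, $\bt_j=-3/2$ gives $t(\al_j+\bt_j)=-1$ at $t=2/3$), so the path in parameter space must be bent to avoid the sets $\al_j(t)\pm\bt_j(t)\in\{-1,-2,\dots\}$; this is legitimate because the error terms are uniform and differentiable away from those sets (cf.\ Remark \ref{17}), but it is not automatic. Second, the RH analysis requires $V$ analytic in an annulus, whereas the theorem is stated for $C^\infty$ (indeed for the weaker condition (\ref{s-main0})); the passage from analytic to merely smooth $V$ needs a separate approximation argument of the kind carried out in Section \ref{secBTb}, which your sketch omits. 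What your route buys over Ehrhardt's is uniformity and differentiability in the parameters and access to higher-order corrections; what it costs is exactly these two technical layers.
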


\begin{remark}
The branches in (\ref{asD}) are determined in the natural way as follows:
$b_\pm(z_j)^{-\al_j\pm\bt_j}=\exp\{(-\al_j\pm\bt_j)\sum_{k=1}^\infty V_{\pm k}z^{\pm k}\}$,
$(z_k z_j^{-1} e^{-i\pi})^{\al_j\bt_k-\al_k\bt_j}=\exp\{i(\th_k-\th_j-\pi)(\al_j\bt_k-\al_k\bt_j)\}$,
and the remaining branches are principal.
\end{remark}

\begin{remark}
In the case of  a single singularity,
i.e. when $m=0$ or $m=1$, $\al_0=\bt_0=0$,
the theorem implies that the asymptotics (\ref{asD}) hold for
\be
\Re\al_m>-{1\over 2},\qquad \bt_m\in\bbc,\qquad \al_m\pm\bt_m\neq -1,-2,\dots
\ee
In fact, if there is only one singularity and $V\equiv 0$, an explicit formula is known \ci{BS}
for $D_n(f)$ in terms of the G-functions.
\end{remark}

\begin{remark}
If all $\Re\bt_j\in(-1/2,1/2]$ or all $\Re\bt_j\in[-1/2,1/2)$,
the conditions $\al_j\pm\bt_j\neq -1,-2,\dots$ are satisfied automatically
as $\Re\al_j>-1/2$.
\end{remark}

\begin{remark}\la{degen1}
Since $G(-k)=0$, $k=0,1,\dots$, the formula (\ref{asD}) no longer
represents the leading asymptotics
if $\al_j+\bt_j$ or $\al_j-\bt_j$ is a negative integer for some $j$.
A similar situation arises in Theorem \ref{BT} below if
some representations in $\mathcal{M}$ are degenerate.
These cases can be approached using Lemma \ref{Chr} below, but we do not address
them in the paper.
\end{remark}

\begin{remark}\la{error}
Assume that the function $V(z)$ is analytic. Then the following
can be said about the remainder term (for more results and
details on the error terms see \cite{DIKadd}).
If all $\bt_j=0$, the error term in (\ref{asD}) is of order
$O(n^{-1})$. If there is only one singularity
the error term is also $O(n^{-1})$. In the general case,
the error term depends on the differences $\bt_j-\bt_k$.
Our methods allow us to calculate several asymptotic
terms rather than just the main one presented in
(\ref{asD}) (and also in (\ref{asDgen}) below).
In \cite{DIKadd}, we show that the expansion (\ref{asD}) with analytic $V(z)$
is uniform  in all $\al_j$, $\bt_j$
for $\bt_j$ in compact subsets of the strip $|\Re\bt_j-\Re\bt_k|<1$,
for $\al_j$ in compact
subsets of the half-plane $\Re\al_j>-1/2$, and outside a neighborhood
of the sets $\al_j\pm\bt_j=-1,-2,\dots$. It will be clear below that
given this uniformity, Theorems \ref{asHankel}, \ref{T+H}
also hold uniformly in the same sense, while for  Theorem \ref{BT}
one should replace $\bt_j$ with $\wt\bt_j$ (see below) in the condition
of uniformity.
\end{remark}

\begin{remark}\la{smoothness1}
Theorem \ref{asTop} as proved by Ehrhardt (and as a consequence,
Theorems \ref{BT}, \ref{asHankel}, \ref{T+H} that we proved
below) hold for $C^\infty$ functions $V(z)$ on the unit circle.
In \cite{DIKadd}, we extend Theorem \ref{asTop} to less smooth $V(z)$.
Namely, it is sufficient that the condition
\be\la{Vcond}
\sum_{k=-\infty}^\infty |k|^s |V_k|<\infty
\ee
holds for some $s$ (and hence for all values in $(0,s)$) such that
\be\la{s-main0}
s>
\frac{1+\sum_{j=0}^m\left[(\Im\al_j)^2+(\Re\bt_j)^2\right]}{1-|||\bt|||}.
\ee
In the present work, we show that given Theorem \ref{asTop}
with the condition (\ref{s-main0}) on $V(z)$, Theorems \ref{asHankel}, \ref{T+H}
hold for $V(z)$ under a similar condition with $m$ replaced by $r+1$ and
contributions from $\al_0$, $\al_{r+1}$ appropriately changed,
while Theorem \ref{BT}
holds under the condition (\ref{s-main}) of Remark \ref{smoothness2} below.
The uniformity in $\al$-, $\bt$-parameters will also hold
provided $s$ is taken large enough.
\end{remark}

In \cite{DIKadd}, we give
an independent proof of  Theorem \ref{asTop}, in the spirit
of \cite{D,IK,Kduke}, using a connection of $D_n(f)$ with
the system of polynomials orthogonal with weight $f(z)$ (\ref{fFH}) on the unit circle.
These polynomials also play a central role in the proofs presented here.

It follows, in particular, from Theorem \ref{asTop} that all $D_k(f)\neq 0$, $k=k_0,k_0+1\dots$,
for some sufficiently large $k_0$ if  $\al_j\pm\bt_j\neq -1,-2,\dots$.
Then the polynomials
$\phi_k(z)=\chi_k z^k+\cdots$, $\widehat\phi_k(z)=\chi_k z^{k}+\cdots$
of degree $k$, $k=k_0,k_0+1,\dots$, satisfying
\begin{multline}\la{or0}
{1\over 2\pi}\int_0^{2\pi}\phi_k(z)z^{-j}f(z)d\theta=\chi_k^{-1}\de_{jk},\qquad
{1\over 2\pi}\int_0^{2\pi}\widehat\phi_k(z^{-1})z^j f(z)d\theta=
\chi_k^{-1}\de_{jk},\\
z=e^{i\theta},\qquad j=0,1,\dots,k,
\end{multline}
exist. It is easy to see that they are given by the following expressions:
\be\la{ef1}
\phi_k(z)={1\over\sqrt{D_k D_{k+1}}}
\left|
\begin{matrix}
f_{00}& f_{01}& \cdots & f_{0k}\cr
f_{10}& f_{11}& \cdots & f_{1k}\cr
\vdots & \vdots &  & \vdots \cr
f_{k-1\,0} & f_{k-1\,1} & \cdots & f_{k-1\,k} \cr
1& z& \cdots & z^k
\end{matrix}
\right|,
\ee
\be\la{ef2}
\widehat\phi_k(z^{-1})={1\over\sqrt{D_k D_{k+1}}}
\left|
\begin{matrix}
f_{00}& f_{01}& \cdots & f_{0\,k-1}& 1\cr
f_{10}& f_{11}& \cdots & f_{1\,k-1}& z^{-1}\cr
\vdots & \vdots &  & \vdots & \vdots\cr
f_{k0} & f_{k1} & \cdots & f_{k\,k-1}& z^{-k}
\end{matrix}
\right|,
\ee
where
\[
f_{st}={1\over 2\pi}\int_0^{2\pi}f(z)z^{-(s-t)}d\theta,\quad
s,t=0,1,\dots,k.
\]
We obviously have
\be\la{chiD}
\chi_k=\sqrt{D_k\over D_{k+1}}.
\ee
These polynomials
satisfy a Riemann-Hilbert problem. In Section \ref{RHa}, we solve the problem asymptotically for large $n$ in case of the weight given by (\ref{fFH}) with analytic $V(z)$, thus obtaining the large $n$ asymptotics of the orthogonal polynomials. The main new feature of the
solution is a construction of the local parametrix at the points $z_j$ of Fisher-Hartwig singularities. This parametrix is given in terms of the confluent hypergeometric function (see Proposition \ref{Param}).
A study of the asymptotic behavior of the polynomials
orthogonal on the unit circle was initiated by Szeg\H o \ci{Szego}.
Riemann-Hilbert methods developed within the last 20 years allow us to
find asymptotics of orthogonal polynomials in all regions
of the complex plane (see \ci{Dstrong} and many subsequent works
by many authors).
Such an analysis of the polynomials with an analytic weight on the unit circle
was carried out in \ci{MMS1}, and for the case of a weight with
$\al_j$-singularities but without jumps, in \ci{MMS2}.
We provide, therefore, a generalization of these results.
Here we present only the following statement we will need below
for the analysis of determinants.

\begin{theorem}\la{poly}
Let $f(e^{i\theta})$ be defined in (\ref{fFH}),
$V(z)$ be analytic in a neighborhood of the unit circle,
and $\phi_k(z)=\chi_k z^k+\cdots$, $\widehat\phi_k(z)=\chi_k z^{k}+\cdots$ be
the corresponding polynomials satisfying (\ref{or0}).
Assume that
$|||\bt|||<1$, $\al_j\pm\bt_j\neq -1,-2,\dots$, $j,k=0,1,\dots, m$.
Let
\be\la{de}
\delta=\max_{j,k} n^{2\Re(\bt_j-\bt_k-1)}=n^{2|||\beta|||-2},
\ee
where the indices $j,k=0$ are omitted if $\al_0=\bt_0=0$.

Then
as $n\to\infty$,
\begin{multline}\la{aschi}
\chi_{n-1}^2=\exp\left[ -\int_0^{2\pi}V(e^{i\th}){d\th\over2\pi}\right]\left(
1-{1\over n}\sum_{k=0}^m (\al_k^2-\bt_k^2)\right.\\
+\left.
\sum_{j=0}^m\sum_{k\neq j}{z_k\over z_j-z_k}
\left({z_j\over z_k}\right)^n n^{2(\bt_k-\bt_j-1)}{\nu_j\over\nu_k}
{\Ga(1+\al_j+\bt_j)\Ga(1+\al_k-\bt_k)\over
\Ga(\al_j-\bt_j)\Ga(\al_k+\bt_k)}\frac{b_+(z_j)b_-(z_k)}{b_-(z_j)b_+(z_k)}
\right.\\
\left.
+ O(\de^2)+O(\de/n)\right),
\end{multline}
where
\be\la{nu}
\nu_j=\exp\left\{-i\pi\left(\sum_{p=0}^{j-1}\al_p-
\sum_{p=j+1}^m\al_p\right)\right\}
\prod_{p\neq j}\left({z_j\over z_p}\right)^{\al_p}
|z_j-z_p|^{2\bt_p}.
\ee
Under the same conditions,
\be\la{asphi}
\phi_n(0)=\chi_n \left(
\sum_{j=0}^m n^{-2\bt_j-1} z_j^n \nu_j
{\Gamma(1+\al_j+\bt_j)\over \Gamma(\al_j-\bt_j)}
\frac{b_+(z_j)}{b_-(z_j)}+
O\left(\left[\de+{1\over n}\right]\max_k{n^{-2\Re\bt_k}\over n}
\right)\right),
\ee
\be\la{ashatphi}
\widehat\phi_n(0)=\chi_n
\left(
\sum_{j=0}^m n^{2\bt_j-1} z_j^{-n} \nu_j^{-1}
{\Gamma(1+\al_j-\bt_j)\over \Gamma(\al_j+\bt_j)}
\frac{b_-(z_j)}{b_+(z_j)}+
O\left(\left[\de+{1\over n}\right]\max_k{n^{2\Re\bt_k}\over n}
\right)\right).
\ee
\end{theorem}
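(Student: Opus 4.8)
The plan is to extract the three quantities from the $2\times2$ Riemann-Hilbert (RH) problem for the polynomials orthogonal with weight $f$ on $C$, and to analyze that problem by Deift-Zhou steepest descent. Recall the Fokas-Its-Kitaev matrix $Y(z)$, analytic off $C$, with jump $Y_+=Y_-\begin{pmatrix}1&z^{-n}f(z)\\0&1\end{pmatrix}$ and normalization $Y(z)=(I+Y_1z^{-1}+\cdots)z^{n\sigma_3}$ as $z\to\infty$; its first row is built from $\phi_n$ and its second from $\widehat\phi_{n-1}$. The explicit RH solution then yields the connection formulas $\phi_n(0)=\chi_n\,Y_{11}(0)$ and $\chi_{n-1}^2=-Y_{21}(0)$, so that (\ref{asphi}) and (\ref{aschi}) reduce to the asymptotics of $Y(0)$. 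I would not prove (\ref{ashatphi}) for $\widehat\phi_n(0)$ separately: it follows from (\ref{asphi}) by the symmetry $z\mapsto 1/z$ (equivalently $\theta\mapsto-\theta$) of the configuration, which interchanges $\phi_n\leftrightarrow\widehat\phi_n$ and sends $\beta_j\mapsto-\beta_j$, $z_j^n\mapsto z_j^{-n}$, $b_\pm\mapsto b_\mp$ and $\nu_j\mapsto\nu_j^{-1}$ in (\ref{nu}), precisely the replacements relating (\ref{asphi}) to (\ref{ashatphi}).

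Next I would run the steepest descent. Since the background symbol is smooth and the relevant equilibrium measure is uniform, the first transformation $Y\mapsto T$ is the trivial normalization by $z^{n\sigma_3}$, after which $T$ carries the oscillatory jump $z^{-n}f$ on $C$. Using analyticity of $V$ (assumed here), I open lenses $T\mapsto S$ so that the jump becomes exponentially close to the identity off $C$, leaving only the fixed contour through the $z_j$. I then build the global parametrix $N$ from the Wiener-Hopf factors $b_\pm$ of (\ref{WienH}) together with the Szeg\H o-type factors carrying the root singularities $|z-z_j|^{2\alpha_j}$ and the jumps $g_{\beta_j}$; at each singularity $z_j$ I insert the local parametrix $P_j$ of Proposition \ref{Param}, constructed from the confluent hypergeometric function and matching $N$ on $\partial U_{z_j}$ to leading order.

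The heart of the argument is the final small-norm problem $R=SN^{-1}$ off the disks and $R=SP_j^{-1}$ inside them, whose only jump is on $\bigcup_j\partial U_{z_j}$, of the form $I+\Delta$ with $\Delta=P_jN^{-1}-I$. I would expand $\Delta$ from the large-argument behavior of the confluent hypergeometric parametrix: its diagonal part is $O(1/n)$ and, collected over $j$, produces the $-\frac1n\sum_j(\alpha_j^2-\beta_j^2)$ term of (\ref{aschi}); its off-diagonal part contains the growing and decaying entries $n^{\mp2\beta_j}$ dressed by the phases $z_j^{\pm n}$, the ratios $\nu_j$, the factors $b_\pm(z_j)$, and the $\Gamma$-quotients appearing in (\ref{asphi})-(\ref{aschi}). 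Solving $R=I+R^{(1)}+\cdots$ by the Cauchy operator, the term $R^{(1)}(z)=\frac1{2\pi i}\sum_j\oint_{\partial U_{z_j}}\frac{\Delta(s)}{s-z}\,ds$ transports the growing entry seated at $z_k$ to the disk at $z_j$, where it multiplies the decaying local behavior $n^{-2\beta_j}$; this single ``bounce'' between a pair $(j,k)$ is what generates the interaction sum, the Cauchy kernel at the centers giving the coefficient $\frac{z_k}{z_j-z_k}$, the phases giving $(z_j/z_k)^n$, and the powers combining to $n^{2(\beta_k-\beta_j-1)}$. Tracing $R\to S\to T\to Y$ and evaluating $Y_{11}(0)$, $Y_{21}(0)$ then yields (\ref{asphi}) and (\ref{aschi}).

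The main obstacle is the bookkeeping of these growing-versus-decaying off-diagonal contributions together with sharp control of the remainder. Because $|\Re\beta_j-\Re\beta_k|<1$, the individual entries of $\Delta$ are only $o(1)$ rather than $O(1/n)$, so to reach the stated error $O(\delta^2)+O(\delta/n)$ with $\delta$ as in (\ref{de}) I must expand $R$ to second order and show that the dangerous products $n^{-2\beta_j}\cdot n^{2\beta_k}$ reassemble exactly into the $O(\delta)$ interaction term, with everything beyond it genuinely $O(\delta^2)+O(\delta/n)$; a convenient centering parameter $\omega$ (as in Remark \ref{smoothness1}) keeps all relevant powers below threshold and makes the small-norm estimate uniform. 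Verifying that the phases, branch choices and $\nu_j$-factors from the several local parametrices combine consistently, so that the $\Gamma$-quotients and the factors $(z_j/z_k)^n$ and $b_\pm(z_j)/b_\mp(z_j)$ appear precisely as written in (\ref{aschi})-(\ref{asphi}), is the most delicate part of the computation.
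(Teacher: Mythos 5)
Your proposal follows essentially the same route as the paper (Sections 3--5): the Fokas--Its--Kitaev RH problem, the identities $\chi_{n-1}^2=-Y^{(n)}_{21}(0)$ and $\phi_n(0)=\chi_nY^{(n)}_{11}(0)$, lens opening, the Szeg\H o-function outer parametrix, confluent hypergeometric local parametrices, and the conjugation by $n^{\om\si_3}$ to keep the small-norm estimate valid when the $\Re\bt_j$ only satisfy $|||\bt|||<1$. Two remarks. First, be careful that the interaction double sum in (\ref{aschi}) does \emph{not} come from $R_1$: evaluated at $z=0$, the first Cauchy iterate contributes only the diagonal $-\frac1n\sum_k(\al_k^2-\bt_k^2)$ and the single off-diagonal sum of (\ref{asphi}); the pair terms $n^{2(\bt_k-\bt_j-1)}$ arise from the second iterate (\ref{R2}) through the products $(A_kA_j)_{22}$ of the residue matrices (\ref{A}), with the Cauchy kernel supplying $z_k/(z_j-z_k)$. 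Your middle paragraph attributes this to $R^{(1)}$, but your final paragraph correctly insists on expanding to second order, so this is a wording issue rather than a gap. Second, your derivation of (\ref{ashatphi}) from (\ref{asphi}) via $z\mapsto z^{-1}$ is a genuine (and legitimate) shortcut the paper does not take: the paper instead reads $\widehat\phi_{n-1}(0)$ directly off $\lim_{z\to\infty}zR_{1,21}(z)$, which costs one more short computation but avoids having to track how the reversal of the angular ordering $0=\th_0<\th_1<\cdots<\th_m$ and the branch conventions in (\ref{abszzj}) act on the factors $\nu_j$ in (\ref{nu}); if you use the symmetry, you must verify explicitly that $\nu_j\mapsto\nu_j^{-1}$ under the induced relabeling, which is true but not automatic.
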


\begin{remark}\la{17}
The error terms here are uniform and differentiable in all $\al_j$, $\bt_j$
for $\bt_j$ in compact subsets of the strip $|\Re\bt_j-\Re\bt_k|<1$,
for $\al_j$ in compact
subsets of the half-plane $\Re\al_j>-1/2$, and outside a neighborhood
of the sets $\al_j\pm\bt_j=-1,-2,\dots$. If $\al_j+\bt_j=0$ or $\al_j-\bt_j=0$
for some $j$, the corresponding terms in the above formulae vanish.
\end{remark}
\begin{remark}
Note that the terms with $n^{2(\bt_k-\bt_j-1)}$ in (\ref{aschi}) become
larger in absolute value than the $1/n$ term for $|||\bt|||>1/2$.
\end{remark}
\begin{remark}
With changes to the error estimates,
this theorem can be generalized to sufficiently smooth $V(z)$ using
(\ref{chiD}), a well-known representation for orthogonal polynomials
as multiple integrals, and
similar arguments to those we give in Section \ref{secBTb} below.
\end{remark}

Our first task in this paper is to extend the asymptotic formula for $D_n(f)$
to arbitrary $\bt_j\in\bbc$,
i.e. for the case when not all $\Re\bt_j$'s lie in a single interval of
length less than 1, in other words for
\[
|||\bt|||\ge 1.
\]
We know from examples
(see, e.g., \ci{BS,BT,Ehr}) that in general, the formula (\ref{asD}) breaks down.
Obviously, the general case can be reduced to $\Re\bt_j\in(q-1/2,q+1/2)$ by adding
integers to $\bt_j$. Then, apart from a constant factor, the only change in $f(z)$ is
multiplication with $z^\ell$, $\ell\in\bbz$. However,
as we show in Lemma \ref{Chr}, the determinants $D_n(f(z))$ and $D_n(z^\ell f(z))$ are
simply related.
They differ just by a factor which involves $\chi_k$, $\phi_k(0)$, $\widehat\phi_k(0)$ for
large $k$ (these quantities are given by Theorem \ref{poly}), as well as the
derivatives of the orthogonal polynomials at $0$.
The derivatives can be calculated similarly to $\phi_k(0)$, $\widehat\phi_k(0)$.
Thus it is easy to obtain the general asymptotic formula for
$D_n(f(z))$.
However, this formula is implicit in the sense that one still
needs to separate the main asymptotic term from the others: e.g.,
if the dimension $\ell$ of $F_n$ in (\ref{213}) is larger than the
number of the leading-order terms in (\ref{asphi}), the obvious
candidate for the leading order in $F_n$ vanishes (this
is not the case in the simplest situation given by Theorem
\ref{BT1}). We resolve this problem below.

Following \ci{BT,Ehr}, we define a {\it FH-representation}
of a symbol. Namely, for $f(z)$ given by (\ref{fFH}) replace
$\bt_j$ by $\bt_j+n_j$, $n_j\in\bbz$
if $z_j$ is a singularity (i.e., if either $\bt_j\neq 0$ or
$\al_j\neq 0$ or both).
The integers $n_j$ are arbitrary subject to the condition
$\sum_{j=0}^m n_j=0$.
In a slightly different notation from \ci{BT,Ehr}, we call the
resulting function $f(z;n_0,\dots,n_m)$ a FH-representation of $f(z)$.
(The original $f(z)$ is also a FH-representation corresponding to
$n_0=\cdots=n_m=0$.)
Obviously, all FH-representations of $f(z)$ differ only by
multiplicative constants.
We have
\be
f(z)=\prod_{j=0}^m z_j^{n_j}\times f(z;n_0,\dots,n_m).
\ee
We are interested in the FH-representations (characterized by
$(n_j)_{j=0}^m$) of $f$ such that
$\sum_{j=0}^m (\Re\bt_j+n_j)^2$ is minimal. There is a finite number
of such FH-representations and we provide an algorithm for finding
them explicitly (see the proof of Lemma \ref{repr} below). We
denote the set of such FH-representations by
$\mathcal{M}$. Furthermore, we call a FH-representation
degenerate if $\al_j+(\bt_j+n_j)$ or $\al_j-(\bt_j+n_j)$ is a negative
integer for some $j$. We call $\mathcal{M}$ non-degenerate if it contains
no degenerate FH-representations.
The set $\mathcal{M}$ can be characterized as follows. For a given
$\bt=(\bt_0,\dots,\bt_m)$ let us call
\[
O_\bt=\{\widehat\bt: \widehat\bt_j=\bt_j+n_j, \sum_{j=0}^m n_j=0\}
\]
the {\it orbit} of $\bt$ (if $\al_0=\bt_0=0$ then we always fix
$n_0=0$). In other words, it is the set of $\widehat\bt$
corresponding to all the FH-representations of $f$. For any
$\widehat\bt$ in $O_\bt$, we define $|||\widehat\bt|||=
\max_{j,k}|\Re\widehat\bt_j-\Re\widehat\bt_k|$ with the range of
indicies $j$, $k$ fixed to be the same as for $|||\bt|||$
(\ref{seminorm}). We have

\begin{lemma}\la{repr}
There exist only the following 2 mutually exclusive possibilities.
\begin{itemize}
\item $\exists \widehat\bt\in O_\bt$ such that $|||\widehat\bt|||<1$. Then such $\widehat\bt$
is unique and it is the unique element of
$\mathcal{M}=\{\widehat\bt\}$.
\item $\exists \widehat\bt\in O_\bt$ such that $|||\widehat\bt|||=1$. Then there are at least 2
such $\widehat\bt$'s and all of them are obtained from each other by
a repeated application of the following rule:
add $1$ to a $\widehat\bt_j$ with the smallest real part
and subtract $1$ from a $\widehat\bt_j$ with the largest. Moreover,
$\mathcal{M}=\{\widehat\bt\in O_\bt:|||\widehat\bt|||=1\}$.
\end{itemize}
\end{lemma}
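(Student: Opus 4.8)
The plan is to forget the imaginary parts of the $\bt_j$ (they enter neither $|||\cdot|||$ nor the quantity $\sum_j(\Re\bt_j+n_j)^2$ defining $\mathcal{M}$) and to read the whole statement as a lattice optimization problem. Writing $x_j=\Re\bt_j$ and $y_j=x_j+n_j$, the orbit $O_\bt$ becomes the coset $\{y:\ y_j\equiv x_j\ (\mathrm{mod}\ 1),\ \sum_{j=0}^m y_j=S\}$ of the root lattice $A_m$, with $S=\sum_j x_j$ fixed; here $|||\widehat\bt|||=\max_j y_j-\min_j y_j=:\mathrm{spr}(y)$, and $\mathcal{M}$ is the set of minimizers of $Q(y)=\sum_{j=0}^m y_j^2$. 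The elementary move $y\mapsto y+(e_j-e_k)$ underlying the rule (add $1$ to $y_j$, subtract $1$ from $y_k$) changes $Q$ by exactly $2(y_j-y_k+1)$, so no such move decreases $Q$ precisely when $y_k-y_j\le 1$ for all $j,k$, i.e. when $\mathrm{spr}(y)\le 1$.

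First I would prove the local-to-global step: any $y\in O_\bt$ with $\mathrm{spr}(y)\le 1$ is in fact a global minimizer. For any competitor $y'=y+p$ with $p\in\bbz^{m+1}$, $\sum_j p_j=0$, and $c=\tfrac12(\max_j y_j+\min_j y_j)$ (so $|y_j-c|\le\tfrac12$), one has $Q(y')-Q(y)=2\sum_j(y_j-c)p_j+\sum_j p_j^2\ge-\sum_j|p_j|+\sum_j p_j^2\ge 0$, using $p_j^2\ge|p_j|$ for integers. Since $Q$ is proper on the lattice a minimizer exists, so $\min_{O_\bt}\mathrm{spr}\le 1$ and $\mathcal{M}=\{y\in O_\bt:\mathrm{spr}(y)\le 1\}$. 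If some $y$ has $\mathrm{spr}(y)<1$ then $|y_j-c|<\tfrac12$, the estimate is strict whenever $p\neq 0$, and $y$ is the unique minimizer; this is the first alternative and simultaneously shows the two alternatives are mutually exclusive and exhaustive (the minimal spread is either $<1$ or $=1$, and a spread-$1$ point would be a second minimizer).

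It remains to treat the second alternative, $\min_{O_\bt}\mathrm{spr}=1$, and this connectivity statement is where the real work lies. Fixing a minimizer $y$ with $a=\min_j y_j$, the indices split into $L=\{y_j=a\}$, $M=\{a<y_j<a+1\}$ and $H=\{y_j=a+1\}$, with $L,H$ nonempty; the rule move from an $L$-index to an $H$-index has $\Delta Q=2(a-(a+1)+1)=0$, producing a second, distinct element of $\mathcal{M}$, so $|\mathcal{M}|\ge 2$. To see that every pair of minimizers is joined by rule moves I would run the equality case of the estimate above: $Q(y')=Q(y)$ forces $\sum_j p_j^2=\sum_j|p_j|$ (so each $p_j\in\{-1,0,1\}$) and $2(y_j-c)p_j=-|p_j|$ for each $j$ with $p_j\neq 0$, which pins $y_j=a$ when $p_j=+1$ and $y_j=a+1$ when $p_j=-1$. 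Hence $p$ only raises certain $L$-coordinates to level $a+1$ and lowers equally many $H$-coordinates to level $a$, leaving $M$ and its values untouched; consequently all minimizers share the same $a$, the same middle values, and the same number of coordinates at each extreme level, differing only in which of the residue-$(a\bmod 1)$ coordinates sit at $a$ versus $a+1$. Any two such assignments differ by a permutation that is a product of transpositions exchanging one low and one high coordinate, and each transposition is realized by a single rule move (which keeps both $Q$ and the spread fixed), giving $\mathcal{M}=\{\widehat\bt\in O_\bt:|||\widehat\bt|||=1\}$ as a single orbit under the rule. The main obstacle is exactly this last equality analysis: it is what rules out minimizers with different extreme levels or different middle values, thereby reducing the connectivity to the transparent combinatorics of redistributing the level-$a$ and level-$(a+1)$ coordinates.
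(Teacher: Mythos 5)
Your proof is correct and rests on exactly the same computation as the paper's: the expansion $\sum_j(y_j+p_j)^2=\sum_j y_j^2+2\sum_j(y_j-c)p_j+\sum_j p_j^2$ bounded below via $|y_j-c|\le 1/2$ and $p_j^2\ge|p_j|$, with strictness when the spread is $<1$ and the equality case identifying $\mathcal{M}$ when the spread is $1$ (this is precisely inequality (\ref{ineq}) with $c$ in the role of $q$). The only difference is one of completeness: you carry out the equality analysis and the connectivity of $\mathcal{M}$ under the swap rule in full detail, whereas the paper asserts these points after establishing the same inequality.
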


\begin{proof}
Suppose that the seminorm $|||\bt|||>1$. Then, writing
$\bt^{(1)}_s=\bt_s+1$,  $\bt^{(1)}_t=\bt_t-1$, and
$\bt^{(1)}_j=\bt_j$ if $j\neq s,t$, where $\bt_s$ is one of the
beta-parameters with $\Re\bt_s=\min_j\Re\bt_j$,  $\bt_t$  is one of
the beta-parameters with $\Re\bt_t=\max_j\Re\bt_j$, we see that
$|||\bt^{(1)}|||\le|||\bt|||$, and $f$ corresponding to $\bt^{(1)}$
is a FH-representation. After a finite number, say $r$, of such
transformations we reduce an arbitrary set of $\bt_j$ to the
situation for which either $|||\bt^{(r)}|||<1$ or
$|||\bt^{(r)}|||=1$. Note that further transformations do not change
the seminorm in the second case, while in the first case the
seminorm oscillates periodically taking two values,
$|||\bt^{(r)}|||$ and $2-|||\bt^{(r)}|||$. Thus all the symbols of
type (\ref{fFH}) belong to two distinct classes: the first, for
which $|||\bt^{(r)}|||<1$, and the second, for which
$|||\bt^{(r)}|||=1$. For symbols of the first class, $\mathcal{M}$
has only one member with beta-parameters $\bt^{(r)}$. Indeed,
writing $b_j=\Re\bt_j$, if $-1/2<b_j^{(r)}-q< 1/2$ for some
$q\in\bbr$ and all $j$, then for any $(k_j)_{j=0}^m$ such that
$\sum_{j=0}^m k_j=0$ and not all $k_j$ are zero, we have
\be\la{ineq} \sum_{j=0}^m (b_j^{(r)}+k_j)^2=\sum_{j=0}^m
(b_j^{(r)})^2+ 2\sum_{j=0}^m(b_j^{(r)}-q)k_j+ \sum_{j=0}^m k_j^2
>\sum_{j=0}^m (b_j^{(r)})^2+\sum_{j=0}^m k_j^2-|k_j|\ge
\sum_{j=0}^m (b_j^{(r)})^2,
\ee
where the first inequality is strict as at least two $k_j\neq 0$.
For symbols of the second class, we can find $q\in\bbr$ such that
$-1/2\le b_j^{(r)}-q\le 1/2$ for all $j$. Equation (\ref{ineq}) in this
case holds with ``$>$'' sign replaced by ``$\ge$''. Clearly, there are several
FH-representations in $\mathcal{M}$ in this case (they correspond to
the equalities in (\ref{ineq})) and
adding $1$ to one of $\bt^{(r)}_s$ with $b^{(r)}_s=\min_j b^{(r)}_j=
q-1/2$
while subtracting $1$ from one of $\bt^{(r)}_t$ with
$b^{(r)}_t=\max_j b^{(r)}_j=q+1/2$
provides the way to find all of them.

A simple explicit sufficient, but obviously not necessary, condition
for $\mathcal{M}$ to have only one member is that all $\Re\bt_j\mod 1$
be different.
\end{proof}

In Section \ref{secBT}, we prove

\begin{theorem}\la{BT}
Let $f(z)$ be given in (\ref{fFH}), $\Re\al_j>-1/2$, $\bt_j\in\bbc$, $j=0,1,\dots,m$.
Let $\mathcal{M}$ be non-degenerate.
Then, as $n\to\infty$,
\be\la{asDgen}
D_n(f)=\sum\left(\prod_{j=0}^m z_j^{n_j}\right)^n
\mathcal{R}(f(z;n_0,\dots,n_m))(1+o(1)),
\ee
where the sum is over all FH-representations in $\mathcal{M}$.
Each $\mathcal{R}(f(z;n_0,\dots,n_m))$ stands for the right-hand side
of the formula (\ref{asD}), without the error term,
corresponding to $f(z;n_0,\dots,n_m)$.
\end{theorem}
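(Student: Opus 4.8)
The plan is to reduce the general-$\bt$ case to a symbol to which Ehrhardt's Theorem \ref{asTop} applies, and then to use the exact relation of Lemma \ref{Chr} together with the orthogonal-polynomial asymptotics of Theorem \ref{poly} to generate the sum over $\mathcal{M}$. First I would choose integers $m_j$ with $\Re(\bt_j+m_j)\in(-1/2,1/2]$, set $\widetilde\bt_j=\bt_j+m_j$ and $\ell=\sum_j m_j$, and let $\widetilde f$ denote the symbol \ref{fFH} with $\bt_j$ replaced by $\widetilde\bt_j$. Since $\sum_j m_j=\ell$ need not vanish, $\widetilde f$ lies outside the orbit of $f$, and a direct computation gives $f(z)=c\,z^{-\ell}\widetilde f(z)$ for an explicit constant $c$ (a product of factors $(-1)^{m_j}z_j^{m_j}$), whence $D_n(f)=c^{\,n}D_n(z^{-\ell}\widetilde f)$. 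By construction $|||\widetilde\bt|||<1$ and $\Re(\al_j\pm\widetilde\bt_j)>-1$, so Ehrhardt's theorem applies to $\widetilde f$ and supplies the prefactor $D_n(\widetilde f)$; moreover Theorem \ref{poly} (stated for analytic $V$, the general smooth case following by the approximation arguments referenced in Remark \ref{smoothness1}) provides the asymptotics of the polynomial data for $\widetilde f$. If $\ell=0$ we are in the first class of Lemma \ref{repr}, $\mathcal{M}$ is a single representation, and \ref{asDgen} reduces to one application of Theorem \ref{asTop}; the substance is the case $\ell\neq0$, which I treat for one sign of $\ell$, the other being identical after interchanging $\phi$ and $\widehat\phi$.

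Next I would invoke Lemma \ref{Chr} to write $D_n(z^{-\ell}\widetilde f)=\pm\,D_n(\widetilde f)\,\det F_n$, where $F_n$ is an $|\ell|\times|\ell|$ matrix whose entries are the coefficients of the polynomials orthogonal with respect to $\widetilde f$ near degree $n$ — the quantities $\chi_{n+p}$, the values $\phi_{n+p}(0),\widehat\phi_{n+p}(0)$, and their low-order derivatives $\phi_{n+p}^{(i)}(0),\widehat\phi_{n+p}^{(i)}(0)$ (equivalently, the relevant corner entries of the inverse Toeplitz matrix). The core of the proof is then the asymptotic evaluation of $\det F_n$. By \ref{asphi}, \ref{ashatphi} and the analogous formulas for the derivatives (obtained by the same Riemann--Hilbert analysis), each entry is a sum over the singularities $j=0,\dots,m$ of terms carrying a growth factor $n^{\mp2\widetilde\bt_j}$, an oscillation $z_j^{\pm n}$, and explicit $\Gamma$- and $b_\pm$-factors. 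Expanding $\det F_n$ by multilinearity (Cauchy--Binet) produces a sum over assignments of a singularity index to each row and column; collecting the oscillatory factors, the term in which the shift is distributed as $k_j$ among the singularities (with $\sum_j k_j=-\ell$) carries the phase $\prod_j z_j^{k_j n}$, which combines with $c^{\,n}$, containing $\prod_j z_j^{m_j n}$, into $(\prod_j z_j^{n_j})^{n}$ for the representation with $n_j=m_j+k_j$, $\sum_j n_j=0$.

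Finally I would identify the surviving terms. The power of $n$ in a full term equals the real part of $\sum_j\bigl(\al_j^2-(\bt_j+n_j)^2\bigr)$, so the dominant assignments are exactly those for which $\sum_j(\Re\bt_j+n_j)^2$ is minimal, i.e.\ the representations in $\mathcal{M}$; for each such representation the accompanying $\Gamma$- and $b_\pm(z_j)$-factors, together with $D_n(\widetilde f)$ from Theorem \ref{asTop}, reassemble into $\mathcal{R}(f(z;n_0,\dots,n_m))$, and summation over $\mathcal{M}$ yields \ref{asDgen}. I expect the main obstacle to be precisely this extraction of the leading order of $\det F_n$: when $\ell$ exceeds the number of leading-order terms available in \ref{asphi}, the obvious leading term of the determinant is a Vandermonde with coincident nodes and therefore vanishes, so one must pass to the confluent structure supplied by the polynomial derivatives and track the subleading corrections $O(\de^2)$, $O(\de/n)$ of Theorem \ref{poly} with care. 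The combinatorics of which sub-determinants are nonzero is governed by the orbit structure of Lemma \ref{repr}, and it is the characterization $\mathcal{M}=\{\widehat\bt\in O_\bt:|||\widehat\bt|||=1\}$ of the second class that matches the non-vanishing contributions one-to-one with the FH-representations summed in \ref{asDgen}; the non-degeneracy hypothesis on $\mathcal{M}$ guarantees that none of the surviving terms is annihilated by a zero of Barnes' $G$-function.
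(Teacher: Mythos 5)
Your overall architecture is the same as the paper's: reduce to a symbol $\wt f$ with $|||\wt\bt|||<1$ to which Theorem \ref{asTop} applies, write $D_n(f)=c^n D_n(z^{\mp\ell}\wt f)$, evaluate the $\ell\times\ell$ determinant $F_n$ of Lemma \ref{Chr} via Theorem \ref{poly}, and read off the sum over $\mathcal{M}$ from the surviving Vandermonde terms. However, there is a genuine gap at exactly the step you flag as ``the main obstacle,'' and the way you propose to get past it would not work. With your normalization $\Re(\bt_j+m_j)\in(-1/2,1/2]$, the number of singularities attaining the extremal real part of $\wt\bt_j$ (hence the number of equal-leading-order terms in (\ref{asphi}) or (\ref{ashatphi})) can be strictly smaller than $\ell$. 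Example: $\bt=(3/2,3/2,0)$ gives $\wt\bt=(1/2,1/2,0)$, $\ell=2$, but only the single node $z_2$ carries the top order $n^{-1}$ in $\phi_n(0)/\chi_n$, while $z_0,z_1$ contribute at order $n^{-2}$ --- which is the same order as the error term $O([\de+1/n]\max_k n^{-2\Re\wt\bt_k}/n)=O(n^{-2})$ of Theorem \ref{poly}. After the coincident-node Vandermonde cancels, the would-be leading contribution to $\det F_n$ (order $n^{-3}$) is indistinguishable from products of main terms with these errors. The ``confluent structure supplied by the polynomial derivatives'' does not rescue this: the degeneracy is in the discrete index $j$ labelling which singularity contributes, and the Vandermonde $\det(s!\,z_{j_i}^{-s})$ annihilates any repeated $j_i$ no matter how many derivatives you take. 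So ``tracking the subleading corrections with care'' would require strictly sharper asymptotics than Theorem \ref{poly} provides, and you have not supplied them.

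The paper's resolution is not finer error analysis but a different choice of $\wt f$: starting from the orbit element $\bt^{(r)}$ with $|||\bt^{(r)}|||=1$, one subtracts $1$ only from the $\ell$ parameters of maximal real part (rather than folding everything into $(-1/2,1/2]$). This forces exactly $\ell+p$ parameters, $p\ge 1$, to share the minimal real part of $\wt\bt$, so the determinant $F_n$ has $\ell+p>\ell$ nodes all of the same top order; its leading term is the nonvanishing sum of $\binom{\ell+p}{\ell}$ distinct-node Vandermonde products, each of the same order and each in bijection (via the characterization of $\mathcal{M}$ in Lemma \ref{repr}) with one FH-representation in $\mathcal{M}$, while the error terms of Theorem \ref{poly} are now genuinely of lower order. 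In your $(3/2,3/2,0)$ example this choice is $\wt\bt=(1/2,1/2,1)$ with $\ell=1$, $p=1$, and everything separates cleanly. Your identification of the dominant assignments with the minimizers of $\sum_j(\Re\bt_j+n_j)^2$ is the right heuristic, but without this specific normalization the extraction of the leading order of $\det F_n$ --- which is the whole point of the theorem beyond Ehrhardt's single-representation case --- does not go through.
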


\begin{remark}
This theorem was conjectured by Basor and Tracy \ci{BT} based on an explicit example:
see Example \ref{example} below.
The case when the FH-representation minimizing
$\sum_{j=0}^m (\Re\bt_j+n_j)^2$ is unique, i.e. there is only
one term in the sum (\ref{asDgen}), was proved by Ehrhardt \ci{Ehr}.
Note that this case is exactly the first possibility of Lemma \ref{repr}.
Thus, Theorem \ref{BT} in this case follows from Theorem
\ref{asTop} applied to this FH-representation.
\end{remark}

\begin{remark}\la{smoothness2}
This theorem relies on Theorem \ref{asTop} and therefore requires $V(z)$ to be
$C^\infty$ on the unit circle. As remarked above, we prove in \ci{DIKadd} that
Theorem \ref{asTop} holds in fact under the condition (\ref{s-main0}).
It then follows (see Section \ref{secBT}) that,
if $\mathcal{M}$ has several members, Theorem \ref{BT} holds for any
\be\la{s-main}
s>
\frac{1+\sum_{j=0}^m\left[(\Im\al_j)^2+
\max\left\{(\Re\wt\bt_j)^2,(\Re\bt^{(r)}_j)^2\right\}\right]}{1-|||\wt\bt|||},
\ee
where $\wt\bt_j$ are obtained from $\bt^{(r)}_j$ (see the proof of Lemma \ref{repr})
by subtracting $1$ from all  $\bt^{(r)}_j$
with the maximal real part and leaving the rest unchanged. The number $|||\wt\bt|||<1$
is defined the same way as $|||\widehat\bt|||$, $|||\bt|||$ above.
\end{remark}

\begin{remark}
The situation when all $\al_j\pm\bt_j$ are nonnegative integers, which was
considered by B\"ottcher and Silbermann in \ci{BS2}, is a particular case
of the above theorem.
\end{remark}

\begin{remark}\la{degen2}
The case when {\it all} the FH-representations of $f$ are degenerate
(not only those in $\mathcal{M}$) was considered by Ehrhardt \ci{Ehr} who found
that in this case $D_n(f)=O(e^{nV_0}n^r)$, where $r$ is any real number.
We can reproduce this result by our methods but do not present it here.
\end{remark}

We will now discuss a simple particular case of Theorem \ref{BT}
and present a direct independent proof in this case.

\begin{theorem}[A particular case of Theorem \ref{BT}]\la{BT1}
 Let the symbol $f^{\pm}(z)$ be obtained from $f(z)$ (\ref{fFH})
by replacing one $\bt_{j_0}$ with $\bt_{j_0}\pm 1$ for some fixed
$0\le j_0\le m$. Let $\Re\al_j>-{1\over 2}$, $\Re\bt_j\in
(-1/2,1/2]$, $j=0,1,\dots,m$. Then, for sufficiently large $n$,
\be
D_n(f^{+}(z))=z_{j_0}^{-n} {\phi_n(0)\over\chi_n}D_n(f(z)),\qquad
D_n(f^{-}(z))=z_{j_0}^n {\widehat\phi_n(0)\over\chi_n}D_n(f(z)).
\ee
These formulae together with
(\ref{asphi},\ref{ashatphi},\ref{aschi},\ref{asD}) yield the
following asymptotic description of $D_n(f^{\pm})$. Let there be
more than one singular point $z_j$ and all $\al_j\pm\bt_j\neq 0$.
For $f^{+}(z)$, let $\bt_{j_p}$, $p=1,\dots,s$
 be such that they have the same real part which is strictly less
than the real parts of all the other $\bt_j$ (if any), i.e.
$\Re\bt_{j_1}=\cdots=\Re\bt_{j_s}<\min_{j\neq j_1,\dots,j_s}\Re\bt_j$.
For $f^{-}(z)$ let one $\bt_{j_p}$, $p=1,\dots,s$ be
such that  $\Re\bt_{j_1}=\cdots=\Re\bt_{j_s}>\max_{j\neq j_1,\dots,j_s}
\Re\bt_j$.
Then the asymptotics of $D_n(f^{\pm})$ are given by the following:
\be\la{asf}
D_n(f^{+})=z_{j_0}^{-n}\sum_{p=1}^s z_{j_p}^n\mathcal{R}_{j_p,+}(1+o(1)),\qquad
D_n(f^{-})=z_{j_0}^{n}\sum_{p=1}^s z_{j_p}^{-n}\mathcal{R}_{j_p,-}(1+o(1)),
\ee
where $\mathcal{R}_{j,\pm}$ is the right-hand side of (\ref{asD})
(without the error term)
in which $\bt_j$ is replaced by $\bt_j\pm 1$, respectively.
\end{theorem}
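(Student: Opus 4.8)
The plan is to prove the two exact prefactor identities first and then substitute the asymptotics of Theorems~\ref{poly} and~\ref{asTop}. \emph{The identities.} I would begin by reading off from (\ref{fFH}) and (\ref{g}) how the symbol changes. Advancing $\bt_{j_0}$ to $\bt_{j_0}+1$ multiplies $z^{\sum\bt_j}$ by $z$ and $z_{j_0}^{-\bt_{j_0}}$ by $z_{j_0}^{-1}$, while $g_{\bt_{j_0}+1}(z)=-g_{\bt_{j_0}}(z)$ multiplies the jump factor by $-1$; hence $f^{+}(z)=-z_{j_0}^{-1}\,z\,f(z)$, and symmetrically $f^{-}(z)=-z_{j_0}\,z^{-1}f(z)$. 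Since $D_n(cf)=c^{\,n}D_n(f)$ for any constant $c$, it suffices to relate $D_n(zf)$ and $D_n(z^{-1}f)$ to $\phi_n(0)$ and $\widehat\phi_n(0)$. For this I would put $z=0$ in (\ref{ef1}) and expand the determinant along its last row: the only surviving minor equals $\det(f_{s-t-1})_{s,t=0}^{n-1}=D_n(zf)$, which with (\ref{chiD}) gives $D_n(zf)=(-1)^n(\phi_n(0)/\chi_n)D_n(f)$; putting $z^{-1}=0$ in (\ref{ef2}) and expanding along the last column gives, identically, $D_n(z^{-1}f)=(-1)^n(\widehat\phi_n(0)/\chi_n)D_n(f)$. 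Combining these with the two scalings, the factors $(-1)^n$ cancel and produce the asserted $D_n(f^{+})=z_{j_0}^{-n}(\phi_n(0)/\chi_n)D_n(f)$ and $D_n(f^{-})=z_{j_0}^{\,n}(\widehat\phi_n(0)/\chi_n)D_n(f)$.

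\emph{Extracting the leading terms.} Because $\Re\bt_j\in(-1/2,1/2]$ we have $|||\bt|||<1$, so Theorem~\ref{asTop} applies to $D_n(f)$; write $\mathcal{R}(f)$ for the right-hand side of (\ref{asD}). Feeding (\ref{asphi}) into the first identity, the $j$-th summand of $\phi_n(0)/\chi_n$ carries $n^{-2\bt_j-1}$, of modulus $n^{-2\Re\bt_j-1}$, which is maximal exactly at the indices of smallest real part, namely $j=j_1,\dots,j_s$. With $\Re\bt_j\in(-1/2,1/2]$ the quantity $\de$ of (\ref{de}) is $o(1)$, so the error in (\ref{asphi}), of size $O((\de+1/n)\max_k n^{-2\Re\bt_k-1})$, is smaller than the leading terms by the factor $\de+1/n=o(1)$, while the remaining summands are smaller by a strictly positive power of $n$. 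The hypothesis $\al_j\pm\bt_j\neq0$ ensures, via Remark~\ref{17}, that none of the $s$ surviving summands vanishes. Hence the first identity reduces to
\[
D_n(f^{+})=z_{j_0}^{-n}\sum_{p=1}^{s}n^{-2\bt_{j_p}-1}z_{j_p}^{\,n}\nu_{j_p}\frac{\Gamma(1+\al_{j_p}+\bt_{j_p})}{\Gamma(\al_{j_p}-\bt_{j_p})}\frac{b_+(z_{j_p})}{b_-(z_{j_p})}\,\mathcal{R}(f)\,(1+o(1)),
\]
and the case of $f^{-}$ is identical via (\ref{ashatphi}), the dominant indices now being those of largest $\Re\bt_j$.

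\emph{Matching the summands.} It then remains to verify, for each $p$, the algebraic identity
\[
n^{-2\bt_{j_p}-1}\,\nu_{j_p}\frac{\Gamma(1+\al_{j_p}+\bt_{j_p})}{\Gamma(\al_{j_p}-\bt_{j_p})}\frac{b_+(z_{j_p})}{b_-(z_{j_p})}\,\mathcal{R}(f)=\mathcal{R}_{j_p,+},
\]
i.e. that multiplying $\mathcal{R}(f)$ by this prefactor advances $\bt_{j_p}$ by $1$ inside (\ref{asD}). I would check this factor by factor: $n^{-2\bt_{j_p}-1}$ accounts for the shift in $n^{\sum(\al_j^2-\bt_j^2)}$; the ratio $b_+(z_{j_p})/b_-(z_{j_p})$ accounts for the change in the Szeg\H o factor $b_+(z_{j_p})^{-\al_{j_p}+\bt_{j_p}}b_-(z_{j_p})^{-\al_{j_p}-\bt_{j_p}}$; the Barnes recursion $G(1+x)=\Gamma(x)G(x)$ turns the $j_p$-th ratio of $G$-factors into exactly $\Gamma(1+\al_{j_p}+\bt_{j_p})/\Gamma(\al_{j_p}-\bt_{j_p})$; and the factor $\nu_{j_p}$ of (\ref{nu}) must reproduce the change in the double product $\prod_{0\le j<k\le m}|z_j-z_k|^{2\bt_j\bt_k}(z_kz_j^{-1}e^{-i\pi})^{\al_j\bt_k-\al_k\bt_j}$. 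The modulus part of (\ref{nu}) gives $\prod_{l\neq j_p}|z_{j_p}-z_l|^{2\bt_l}$ at once, and the phase part $\exp\{-i\pi(\sum_{l<j_p}\al_l-\sum_{l>j_p}\al_l)\}\prod_{l\neq j_p}(z_{j_p}/z_l)^{\al_l}$ must coincide with the product of the pairwise ratios $(z_{j_p}z_j^{-1}e^{-i\pi})^{\al_j}$ over $j<j_p$ and $(z_kz_{j_p}^{-1}e^{-i\pi})^{-\al_k}$ over $k>j_p$. Carrying out this last comparison is the one genuinely delicate step: one must confirm that the $-i\pi$ shifts and the branches in (\ref{nu}) assemble to the same complex number as the explicit $e^{-i\pi}$-branches used in (\ref{asD}) and specified in the Remark following Theorem~\ref{asTop}. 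The definition (\ref{nu}) of $\nu_{j_p}$ is arranged precisely so that this equality holds; the $f^{-}$ case follows the same bookkeeping with $\bt_{j_p}\to\bt_{j_p}-1$ and $\nu_{j_p}^{-1}$, completing the proof.
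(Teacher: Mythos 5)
Your proposal is correct and follows the same overall route as the paper's proof: read off $f^{\pm}(z)=-z_{j_0}^{\mp1}z^{\pm1}f(z)$ from (\ref{fFH})--(\ref{g}), use $D_n(cf)=c^nD_n(f)$ together with the identities $D_n(z^{\pm1}f)=(-1)^n\phi_n(0)/\chi_n\cdot D_n(f)$ (resp.\ with $\widehat\phi_n(0)$) to get the exact prefactor formulae, then keep only the summands of (\ref{asphi}), (\ref{ashatphi}) with extremal $\Re\bt_j$ and match them against (\ref{asD}) via $G(1+x)=\Ga(x)G(x)$. The one genuine divergence is how you obtain the $\ell=1$ identities: the paper cites Lemma \ref{Chr}, which it proves by a Christoffel-type construction of the orthogonal polynomials for the weight $z^{\pm1}f(z)$ (plus a continuity argument to weaken the nonvanishing hypotheses), whereas you derive them directly by cofactor expansion of the determinantal formulae (\ref{ef1}) and (\ref{ef2}) at the origin, identifying the surviving minor as $D_n(z^{\pm1}f)$. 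Your route is more elementary and entirely adequate here, since it is valid whenever $D_n,D_{n+1}\neq0$, which Theorem \ref{asTop} guarantees for all large $n$; what the paper's Lemma \ref{Chr} buys in exchange is the general-$\ell$ statement (\ref{213}) that is indispensable later for the full Theorem \ref{BT}. Your error analysis ($\de=o(1)$ when $|||\bt|||<1$, nonvanishing of the leading coefficients from $\al_j\pm\bt_j\neq0$) and the factor-by-factor matching of $n^{-2\bt_{j_p}-1}\nu_{j_p}\Ga(1+\al_{j_p}+\bt_{j_p})/\Ga(\al_{j_p}-\bt_{j_p})\cdot b_+(z_{j_p})/b_-(z_{j_p})$ against the change in (\ref{asD}) are both right, and the branch bookkeeping you flag as delicate does check out against the conventions stated after Theorem \ref{asTop}.
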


\begin{proof}
For simplicity, we present the proof only for $V(z)$ analytic in
a neighborhood of the unit circle.
Consider the case of $f^{-}(z)$. It corresponds to one of the $\bt_j$ shifted
inside the interval $(-3/2,-1/2]$. Since
\[
z^{\sum_{j=0}^m\bt_j-1}=z^{-1}z^{\sum_{j=0}^m\bt_j},\qquad g_{\bt_{j_0}-1}(z)=
-g_{\bt_{j_0}}(z),\qquad z_{j_0}^{-\bt_{j_0}+1}=z_{j_0}z_{j_0}^{-\bt_{j_0}},
\]
we see that
\[
f^{-}(z)=-z_{j_0}z^{-1}f(z).
\]
Therefore, using the identity (\ref{DD-1}) below,
we obtain
\[
D_n(f^{-}(z))=(-z_{j_0})^n D_n(z^{-1}f(z))=
z_{j_0}^n {\widehat\phi_n(0)\over\chi_n}D_n(f(z)).
\]
If, for some $j_1$, $j_2$, $\dots$, $j_s$, we have that
$\Re\bt_{j_1}=\cdots=\Re\bt_{j_s}>\max_{j\neq j_1,\dots,j_s}\Re\bt_j$,
then we see from
(\ref{ashatphi}) that only the addends with $n^{2\bt_{j_1}-1}$, $\dots$,
$n^{2\bt_{j_s}-1}$ give  contributions
to the main asymptotic term of $D_n(f^{-}(z))$. Using Theorem \ref{asTop} for $D_n(f(z))$
and the relation
$G(1+x)=\Ga(x) G(x)$, we obtain the formula (\ref{asf}) for $D_n(f^{-}(z))$.
The case of $f^{+}(z)$ is similar.
\end{proof}

\begin{example}\la{examplesection}
In \ci{BT} Basor and Tracy considered a simple example of a symbol
of type (\ref{fFH}) for which the asymptotics of the determinant can
be computed directly, but are very different from (\ref{asD}). Up to
a constant, the symbol is
\be\la{example}
f^{(BT)}(e^{i\th})=
\begin{cases}
-i,& 0<\th<\pi\cr
i,& \pi<\th<2\pi
\end{cases}.
\ee
We can represent $f^{(BT)}$ as a symbol with $\bt$-singularities
$\bt_0=1/2$, $\bt_1=-1/2$ at the points $z_0=1$ and $z_1=-1$, respectively:
\be\la{tildef}
f^{(BT)}(z)=g_{1,1/2}(z)g_{-1,-1/2}(z)e^{i\pi/2}
\ee
We see that $f^{(BT)}(z)=f^{-}(z)$ and $j_0=1$.
Therefore by the first part of Theorem \ref{BT1}, we have
\[
D_n(f^{(BT)}(z))=(-1)^n{\widehat\phi_n(0)\over\chi_n}D_n(f(z)),
\]
where $\phi_n(z)$, $\chi_n$, $D_n(f(z))$ correspond to $f(z)$ given by
(\ref{fFH}) with $m=1$, $z_0=1$, $z_1=e^{i\pi}$, $\bt_0=\bt_1=1/2$,
$\al_0=\al_1=0$.

Observing that $s=2$, $j_1=j_0=1$ and $j_2=0$ and
using (\ref{asf}) we obtain
\[
D_n(f^{(BT)}(z))=(-1)^n((-1)^n\mathcal{R}_{1,-}+\mathcal{R}_{0,-}).
\]
Since $\mathcal{R}_{1,-}=\mathcal{R}_{0,-}=(2n)^{-1/2}G(1/2)^2 G(3/2)^2(1+o(1))$, we
obtain
\be
D_n(f^{(BT)}(z))=\frac{1+(-1)^n}{2}\sqrt{2\over n} G(1/2)^2 G(3/2)^2(1+o(1)),
\ee
which is the answer found in \ci{BT}.

As noted by Basor and Tracy, $f^{(BT)}(z)$
has a different FH-representation of type (\ref{fFH}), namely,
with $\bt_0=-1/2$, $\bt_1=1/2$, and we can write
\be\la{f2}
f^{(BT)}(z)=-g_{1,-1/2}(z)g_{-1,1/2}(z)e^{-i\pi/2}.
\ee
This fact was the origin of their conjecture. In the notation of Theorem \ref{BT},
the symbol (\ref{tildef}) has the two FH-representations minimizing
$\sum_{j=0}^1 (\Re\bt_j+n_j)^2$, one with $n_0=n_1=0$ and the other with $n_0=-1$, $n_1=1$.
\end{example}

Note that in the case $\sum_{j=0}^m\bt_j=0$
we can always assume that $\Re\bt_j\in[-1/2,1/2]$.
The beta-singularities then are just piece-wise constant (step-like)
functions.
This case is relevant for our next result, which is on Hankel determinants.

Let $w(x)$ be an integrable complex-valued function on the interval $[-1,1]$.
Then the Hankel determinant with symbol $w(x)$ is given by
\be
D_n(w(x))=\det\left(\int_{-1}^1 x^{j+k}
  w(x)dx\right)_{j,k=0}^{n-1}.
\ee
Define $w(x)$ for a fixed $r=0,1,\dots$ as follows:
\begin{multline}\la{wFH}
w(x)=e^{U(x)}\prod_{j=0}^{r+1}|x-\lb_j|^{2\al_j}\om_j(x)\\
1=\lb_0>\lb_1>\cdots>\lb_{r+1}=-1,\qquad
\om_j(x)=
\begin{cases}
e^{i\pi\bt_j}& \Re x\le\lb_j\cr
e^{-i\pi\bt_j}& \Re x>\lb_j
\end{cases},\qquad
\Re\bt_j\in(-1/2,1/2],\\
\bt_0=\bt_{r+1}=0,\qquad \Re\al_j>-{1\over 2}, \qquad j=0,1,\dots,r+1.
\end{multline}
where $U(x)$ is a sufficiently smooth function on the interval $[-1,1]$.
Note that we set $\bt_0=\bt_{r+1}=0$, $\Re\bt_j\in(-1/2,1/2]$
without loss of generality as the
functions $\om_0(x)$, $\om_{r+1}(x)$ are just constants on $(-1,1)$, and
$\om_j(x;\bt_j+k_j)=(-1)^{k_j}\om(x;\bt_j)$, $k_j\in\bbz$.

In Section \ref{Hankel}, we prove
\begin{theorem}\la{asHankel}
Let $w(x)$ be defined as in (\ref{wFH}) with $\Re\bt_j\in\left(-{1\over 2},
{1\over 2}\right)$, $j=1,2,\dots,r$.
Then as $n\to\infty$,
\begin{multline}\la{asDH}
D_n(w)=
D_n(1)e^{\left[(n+\al_0+\al_{r+1})V_0-\al_0 V(1)-\al_{r+1}V(-1)+
{1\over 2}\sum_{k=1}^\infty k V_k^2\right]}\\
\times
\prod_{j=1}^r b_+(z_j)^{-\al_j-\bt_j}b_-(z_j)^{-\al_j+\bt_j}\times
e^{\left[2i(n+A)\sum_{j=1}^r\bt_j\arcsin\lb_j+
i\pi\sum_{0\le j<k\le r+1}(\al_j\bt_k-\al_k\bt_j)
\right]}\\
\times
4^{-\left(An+\al_0^2+\al_{r+1}^2+\sum_{0\le j<k\le r+1}\al_j\al_k+\sum_{j=1}^r\bt_j^2
\right)}
(2\pi)^{\al_0+\al_{r+1}}
n^{2(\al_0^2+\al_{r+1}^2)+\sum_{j=1}^r(\al_j^2-\bt_j^2)}\\
\times
\prod_{0\le j<k\le r+1}|\lb_j-\lb_k|^{-2(\al_j\al_k+\bt_j\bt_k)}
\left|\lb_j\lb_k-1+\sqrt{(1-\lb_j^2)(1-\lb_k^2)}\right|^{2\bt_j\bt_k}\\
\times
{1\over G(1+2\al_0)G(1+2\al_{r+1})}
\prod_{j=1}^r(1-\lb_j^2)^{-(\al_j^2+\bt_j^2)/2}
\frac{G(1+\al_j+\bt_j) G(1+\al_j-\bt_j)}{G(1+2\al_j)}
\left(1+o(1)\right),
\end{multline}
where $A=\sum_{k=0}^{r+1}\al_k$,
$V(e^{i\th})=U(\cos\th)$, $z_j=e^{i\th_j}$, $\lb_j=\cos\th_j$, $j=0,\dots,r+1$,
and the functions $b_\pm(z)$ are defined in (\ref{WienH}).
\end{theorem}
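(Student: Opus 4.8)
The plan is to transfer the problem from the interval $[-1,1]$ to the unit circle through the substitution $x=\cos\theta$ and then invoke the Toeplitz asymptotics of Theorem~\ref{asTop}. Under this substitution the Hankel moments $\int_{-1}^1 x^{j+k}w(x)\,dx$ become integrals over the upper semicircle, and extending by the reflection $\theta\mapsto-\theta$ one associates to $w$ an even symbol $f(e^{i\theta})=w(\cos\theta)$ of Fisher--Hartwig type (\ref{fFH}) on the full circle, with $V(e^{i\theta})=U(\cos\theta)$. The key algebraic input is $\cos\theta-\cos\theta_j=-2\sin\frac{\theta+\theta_j}{2}\sin\frac{\theta-\theta_j}{2}$: each interior singularity $\lambda_j=\cos\theta_j$, $1\le j\le r$, splits into a conjugate pair at $z_j=e^{i\theta_j}$ and $\bar z_j=e^{-i\theta_j}$, each of strength $\alpha_j$, while the step $\omega_j$ yields opposite exponents $+\beta_j$ at $z_j$ and $-\beta_j$ at $\bar z_j$ because the orientation of the cut reverses in the lower half-circle. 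Hence the $\Re\beta$-data on the circle is $\{\pm\Re\beta_j\}\subset(-1/2,1/2)$, so $|||\beta|||<1$ and Ehrhardt's regime of Theorem~\ref{asTop} applies directly, with no recourse to Theorem~\ref{BT}; this is precisely why the strict hypothesis $\Re\beta_j\in(-1/2,1/2)$ is imposed for the interior points. The endpoints $\lambda_0=1$ and $\lambda_{r+1}=-1$ are the fixed points $z=1$, $z=-1$ of the reflection, and since $\cos\theta-1=-2\sin^2\frac\theta2$ and $\cos\theta+1=2\cos^2\frac\theta2$ they become single singularities at $z=\pm1$ with doubled strengths $2\alpha_0$ and $2\alpha_{r+1}$.

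Next I would convert the Hankel determinant into Toeplitz data for $f$ by an exact finite-$n$ identity. Writing the symmetric Toeplitz matrix of $f$ in even/odd block form---equivalently, via the Szeg\H o map between polynomials orthogonal on the circle with a symmetric weight and polynomials orthogonal on $[-1,1]$---the Toeplitz determinant $D_N(f)$ factors into a product of two Hankel determinants on $[-1,1]$ whose weights differ from $w$ only by Jacobi-type factors such as $1-x^2$ or $1\pm x$. Using both parities $N=2n$ and $N=2n\pm1$ produces a closed system from which the single determinant $D_n(w)$ can be isolated. The fine ratio of $D_n(w)$ to its companion---the analogue of $\chi_n$, $\phi_n(0)$, $\widehat\phi_n(0)$---is supplied by Theorem~\ref{poly}; evenness of $f$ forces these quantities to involve the values of the orthogonal polynomials at the fixed points $z=\pm1$, and the $z_j^{\pm n}$ factors in (\ref{asphi}), (\ref{ashatphi}) are exactly what will generate the $n$-dependent phase $e^{2i(n+A)\sum_j\beta_j\arcsin\lambda_j}$ once one writes $z_j=e^{i\theta_j}$ with $\theta_j=\arccos\lambda_j$ and combines the contribution of each conjugate pair.

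I would then substitute (\ref{asD}) for each Toeplitz determinant and simplify. Because $V$ is even one has $V_k=V_{-k}$, so the Szeg\H o constant $\sum_{k\ge1}kV_kV_{-k}$ collapses to $\tfrac12\sum_{k\ge1}kV_k^2$ and the $b_\pm(z_j)$ recombine into the real factors in (\ref{asDH}); the half-dimension inherent in the factorization halves every FH exponent, turning the circle power $n^{\,4\alpha_0^2+4\alpha_{r+1}^2+2\sum_{j=1}^r(\alpha_j^2-\beta_j^2)}$ into the stated $n^{\,2(\alpha_0^2+\alpha_{r+1}^2)+\sum_{j=1}^r(\alpha_j^2-\beta_j^2)}$. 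The double products over the pairs $\{z_j,\bar z_j\}$ must be regrouped: the within-pair factor $|z_j-\bar z_j|=2\sqrt{1-\lambda_j^2}$ supplies the $(1-\lambda_j^2)^{-(\alpha_j^2+\beta_j^2)/2}$ terms, while the cross factors $|z_j-z_k||z_j-\bar z_k|$ and their phases collapse, by $|z_j-z_k||z_j-\bar z_k|=2|\lambda_j-\lambda_k|$ and the corresponding identity for the $\beta$-phase, into $|\lambda_j-\lambda_k|^{-2(\alpha_j\alpha_k+\beta_j\beta_k)}$ and into $\bigl|\lambda_j\lambda_k-1+\sqrt{(1-\lambda_j^2)(1-\lambda_k^2)}\bigr|^{2\beta_j\beta_k}$. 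The accumulated powers of $2$ and $\pi$ produce the $4^{-(\cdots)}$ and $(2\pi)^{\alpha_0+\alpha_{r+1}}$ prefactors, and the pairwise phases $(z_k/z_je^{i\pi})^{\alpha_j\beta_k-\alpha_k\beta_j}$ yield the residual $i\pi\sum_{j<k}(\alpha_j\beta_k-\alpha_k\beta_j)$ term.

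I expect two points to be genuinely delicate, the rest being a long but routine computation. First, the determinantal identity relating $D_n(w)$ to Toeplitz data must be exact and its error controlled, so that a single $(1+o(1))$ survives the step of solving the $N=2n,\,2n\pm1$ system; this is where the subleading precision of Theorem~\ref{poly}, not merely the leading Toeplitz asymptotics, is indispensable. Second, and hardest, is the treatment of the two endpoint singularities at the fixed points $z=\pm1$: they are not part of a conjugate pair, their $\alpha$-exponents double, and the companion Jacobi factor shifts their exponents by $\pm\tfrac12$, so the endpoint Barnes functions are redistributed between the two Hankel factors rather than simply halving. Tracking this redistribution is exactly what is needed to produce the factor $1/\bigl(G(1+2\alpha_0)G(1+2\alpha_{r+1})\bigr)$ together with the endpoint contributions to $4^{-(An+\cdots)}$ and $(2\pi)^{\alpha_0+\alpha_{r+1}}$; I anticipate this endpoint bookkeeping, alongside the regrouping of the cross-products into the $\sqrt{(1-\lambda_j^2)(1-\lambda_k^2)}$ form, to be the principal obstacle.
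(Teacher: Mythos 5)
Your overall strategy --- pass to an even symbol on the circle, apply Theorem \ref{asTop} there, and divide out the mismatch by an exact finite-$n$ identity --- is the right one and is essentially the paper's. But there is a genuine gap at the step you describe as ``supplied by Theorem \ref{poly}''. The exact identity the paper uses (Theorem \ref{HT}, equation (\ref{HTdet})) reads
\[
D_n(w)^2=\frac{\pi^{2n}}{4^{(n-1)^2}}\,(1+\Phi_{2n}(0))^2\,\frac{D_{2n}(f)}{\Phi_{2n}(1)\Phi_{2n}(-1)},
\]
so what must be evaluated asymptotically is $\Phi_{2n}(\pm1)$, the monic orthogonal polynomials \emph{at the Fisher--Hartwig singular points} $z=\pm1$. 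Theorem \ref{poly} gives only $\chi_n$, $\phi_n(0)$ and $\widehat\phi_n(0)$; it says nothing about values at the singularities, and those cannot be read off from the outer parametrix, which blows up there. The paper devotes the bulk of Section \ref{Hankel} (formulas (\ref{Ymid}) through (\ref{PP})) to extracting $\Phi_{2n}(\pm1)$ from the confluent-hypergeometric local parametrix of Proposition \ref{Param}, producing the matrix $M$ with entries $\Gamma(1+\al_j\pm\bt_j)/\Gamma(1+2\al_j)$; this is precisely where the factor $1/(G(1+2\al_0)G(1+2\al_{r+1}))$ and the prefactors $(2\pi)^{\al_0+\al_{r+1}}$, $n^{2(\al_0^2+\al_{r+1}^2)}$ come from. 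Your alternative ``closed system'' from $N=2n,\,2n\pm1$ has the same hole: the even/odd factorizations yield products such as $D_n(w)\,D_n(w(1-x^2))$, and isolating $D_n(w)$ requires the ratio $D_n(w(1-x^2))/D_n(w)$, which by a Christoffel-type argument is again controlled by the polynomial values at $\pm1$. Nothing in your proposal produces these.

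Two smaller inaccuracies. The correct circle symbol is $f(e^{i\th})=w(\cos\th)\,|\sin\th|$ (equation (\ref{wf})), not $w(\cos\th)$: the Jacobian is what turns the Hankel moments into Fourier coefficients, and it shifts the endpoint Fisher--Hartwig exponents to $2\al_0+\tfrac12$ and $2\al_{r+1}+\tfrac12$ rather than $2\al_0$, $2\al_{r+1}$; your later remark about a $\pm\tfrac12$ shift from a ``companion Jacobi factor'' is inconsistent with your initial identification. Also, the oscillatory phase $e^{2i(n+A)\sum_j\bt_j\arcsin\lb_j}$ does not come from the $z_j^{\pm n}$ factors in (\ref{asphi}) --- the factor $(1+\Phi_{2n}(0))^2$ tends to $1$ and contributes nothing at leading order; in the paper it arises from the constant $C^{2n}$ relating $f$ to its normalized form $\wt f$ together with the double product $\mathcal{P}$.
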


\begin{remark}
$D_n(1)$ is an explicitly computable determinant, cf. \ci{Warc}, related to the Legendre polynomials
(it can also be written as a Selberg integral)
\be
D_n(1)=2^{n^2}\prod_{k=0}^{n-1}
\frac{k!^3}{(n+k)!}
=\frac{\pi^{n+1/2}G(1/2)^2}{2^{n(n-1)}n^{1/4}}\left(1+o(1)\right).
\ee
\end{remark}

To prove Theorem \ref{asHankel} we use the fact that
$w(x)$ can be generated by a particular class of functions
$f(z)$ given by (\ref{fFH}). Namely, we can find an {\it even} function $f$ of $\th$
($f(e^{i\th})=f(e^{-i\th})$, $\th\in [0,2\pi)$)
such that
\be\la{wf}
w(x)={f(e^{i\th})\over|\sin\th|},\qquad x=\cos\th,\quad x\in[-1,1].
\ee
We must have (see Section \ref{Hankel} below) that $m=2r+1$, $\th_0=0$,
$\th_{r+1}=\pi$,
$\th_{m+1-j}=2\pi-\th_j$, $j=1,\dots r$. If we denote the beta-parameters of
$f(z)$ by $\wt\bt_j$, we obtain $\wt\bt_0=\wt\bt_{r+1}=0$,
$\wt\bt_j=-\wt\bt_{m+1-j}=-\bt_j$, $j=1,\dots,r$.
In particular, $\sum_{j=0}^m\wt\bt_j=0$ as remarked above.

In Section \ref{Hankel} we obtain  Theorem \ref{asHankel} from Theorem \ref{asTop} and
the asymptotics for the orthogonal polynomials on the unit circle with weight $f(z)$
using the following connection between Hankel and Toeplitz determinants
established by Theorem \ref{HT} below:
\be\la{131}
D_n(w(x))^2={\pi^{2n}\over 4^{(n-1)^2}}
{(\chi_{2n}+\phi_{2n}(0))^2\over\phi_{2n}(1)\phi_{2n}(-1)}D_{2n}(f(z)),
\ee
where $w(x)$ and $f(z)$ are related by (\ref{wf}).

\begin{remark}\la{b12}
Asymptotics of a Hankel determinant when some (or all) of $\bt_j$ have the real part
$1/2$ can be easily obtained. For the corresponding $f(z)$ this implies that
certain $\Re\wt\bt_j=-1/2$ and $\Re\wt\bt_{m+1-j}=1/2$ and the rest
$\Re\wt\bt_k\in(-1/2,1/2)$. Thus, Theorem \ref{BT} can be used to estimate
$D_{2n}(f(z))$. For the asymptotics of $\phi_{2n}(z)$ in this case we need
an additional ``correction'' $R_1$ term (given by (\ref{plem}) below) which is now
$O(n^{-2\wt\bt_j-1})=O(1)$.
\end{remark}

\begin{remark}
One can obtain the asymptotics of the polynomials orthogonal
on the interval $[-1,1]$ with weight (\ref{wFH}) by using our
results for the polynomials $\phi_k(z)$ orthogonal with the
corresponding even weight on the unit circle
and a Szeg\H o relation (Lemma \ref{Pszego} below) which maps
the latter polynomials to the former ones.
\end{remark}

\begin{remark}
Asymptotics for a subset of symbols (\ref{wFH}) which satisfy a symmetry condition
and have a certain behaviour at the end-points $\pm 1$ were found
by Basor and Ehrhardt in \ci{BE1}. They use relations between
Hankel and Toeplitz determinants
which are less general than (\ref{131}) but do not involve polynomials.
For some other related results, see \ci{Geronimo,KVA}.
\end{remark}

Our final task is to present asymptotics for the so-called Toeplitz+Hankel
determinants.
We consider the four most important ones appearing in the theory of classical groups and
its applications to random matrices and statistical mechanics
(see, e.g., \ci{BR,FF,KM}) defined in terms of
the Fourier coefficients of an even $f$ (evenness implies
the matrices are symmetric) as follows:
\be\la{intT+H}
\det (f_{j-k}+f_{j+k})_{j,k=0}^{n-1},\quad
\det (f_{j-k}-f_{j+k+2})_{j,k=0}^{n-1},\quad
\det (f_{j-k}\pm f_{j+k+1})_{j,k=0}^{n-1}.
\ee
There are simple relations \ci{Weyl,J2,BR} between the determinants (\ref{intT+H})
and Hankel determinants on $[-1,1]$ with added singularities at the end-points.
These are summarized in Lemma \ref{HTH} below.
It is easily seen that if $f(z)$ is an (even) function of type (\ref{fFH})
then the corresponding symbols of Hankel determinants belong to
the class (\ref{wFH}). Thus a straightforward combination of Lemma \ref{HTH} and
Theorem \ref{asHankel} (aided by formulae of Section \ref{Hankel})
gives the following

\begin{theorem}\la{T+H}
Let $f(z)$ be defined in (\ref{fFH}) with the condition
$f(e^{i\th})=f(e^{-i\th})$. Let $\th_{r+1}=\pi$ and
$\Re\bt_j\in\left(-{1\over 2},{1\over2}\right)$,
$j=1,2,\dots,r$, $\bt_0=\bt_{r+1}=0$.
Then as $n\to\infty$,
\begin{multline}\la{TH}
D_n^{\mathrm{T+H}}=
e^{nV_0+{1\over 2}\left[(\al_0+\al_{r+1}+s+t)V_0-(\al_0+s)V(1)-(\al_{r+1}+t)V(-1)+
\sum_{k=1}^\infty k V_k^2\right]}\\
\times
\prod_{j=1}^r b_+(z_j)^{-\al_j+\bt_j}b_-(z_j)^{-\al_j-\bt_j}\times
e^{-i\pi\left[\left\{\al_0+s+\sum_{j=1}^r\al_j\right\}\sum_{j=1}^r\bt_j+
\sum_{1\le j<k\le r}(\al_j\bt_k-\al_k\bt_j)
\right]}\\
\times
2^{(1-s-t)n+p+\sum_{j=1}^r(\al_j^2-\bt_j^2)-{1\over2}(\al_0+\al_{r+1}+s+t)^2
+{1\over2}(\al_0+\al_{r+1}+s+t)}
n^{{1\over2}(\al_0^2+\al_{r+1}^2)+\al_0 s+\al_{r+1}t+
\sum_{j=1}^r(\al_j^2-\bt_j^2)}\\
\times
\prod_{1\le j<k\le r}|z_j-z_k|^{-2(\al_j\al_k-\bt_j\bt_k)}
|z_j-z_k^{-1}|^{-2(\al_j\al_k+\bt_j\bt_k)}\\
\times
\prod_{j=1}^r z_j^{2\wt A\bt_j}|1-z_j^2|^{-(\al_j^2+\bt_j^2)}
|1-z_j|^{-2\al_j(\al_0+s)}|1+z_j|^{-2\al_j(\al_{r+1}+t)}\\
\times
\frac{\pi^{{1\over2}(\al_0+\al_{r+1}+s+t+1)}G(1/2)^2}
{G(1+\al_0+s)G(1+\al_{r+1}+t)}
\prod_{j=1}^r\frac{G(1+\al_j+\bt_j) G(1+\al_j-\bt_j)}{G(1+2\al_j)}
\left(1+o(1)\right),
\end{multline}
where $\wt A={1\over2}(\al_0+\al_{r+1}+s+t)+\sum_{j=1}^r\al_j$ and
\begin{align}
&D_n^{\mathrm{T+H}}=\det (f_{j-k}+f_{j+k})_{j,k=0}^{n-1},\qquad
\mbox{with}\quad p=-2n+2,\quad s=t=-{1\over 2}\\
&D_n^{\mathrm{T+H}}=\det (f_{j-k}-f_{j+k+2})_{j,k=0}^{n-1},\qquad
\mbox{with}\quad p=0,\quad s=t={1\over 2}\\
&D_n^{\mathrm{T+H}}=\det (f_{j-k}\pm f_{j+k+1})_{j,k=0}^{n-1},\qquad
\mbox{with}\quad p=-n,\quad s=\mp{1\over 2},\quad t=\pm{1\over 2}.
\end{align}
\end{theorem}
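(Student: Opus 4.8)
The plan is to reduce each of the four determinants in (\ref{intT+H}) to a single Hankel determinant on $[-1,1]$ of the class (\ref{wFH}) and then invoke Theorem \ref{asHankel}. The first step is Lemma \ref{HTH}, which expresses each Toeplitz+Hankel determinant as an explicit elementary prefactor times a Hankel determinant $D_n(\hat w)$ whose symbol has the form
\[
\hat w(x)=\frac{f(e^{i\th})}{|\sin\th|}\,(1-x)^{s+1/2}(1+x)^{t+1/2},\qquad x=\cos\th .
\]
Since $f$ is even of type (\ref{fFH}) with $\th_{r+1}=\pi$ and $\bt_0=\bt_{r+1}=0$, a local analysis at $x=\pm1$ and at the interior points $\lb_j=\cos\th_j$ shows that $\hat w$ is again of class (\ref{wFH}): the interior exponents $\al_j,\bt_j$ ($j=1,\dots,r$) are unchanged, while the two endpoints acquire the exponents $\tfrac12(\al_0+s)$ and $\tfrac12(\al_{r+1}+t)$, the factor $\tfrac12$ arising from the folding $x=\cos\th$ and the extra half-integer from the Jacobi weight $(1-x)^{s+1/2}(1+x)^{t+1/2}$ supplied by Lemma \ref{HTH}. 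The four cases are distinguished only by $(s,t)\in\{\pm\tfrac12\}^2$ and by the power-of-two prefactor recorded through $p$.

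Next I would apply Theorem \ref{asHankel} to $D_n(\hat w)$, substituting $\al_0\mapsto\tfrac12(\al_0+s)$ and $\al_{r+1}\mapsto\tfrac12(\al_{r+1}+t)$ throughout (\ref{asDH}) while leaving the interior data untouched, and insert the explicit asymptotics of $D_n(1)$. One checks immediately that the endpoint Barnes factors $1/(G(1+2\al_0)G(1+2\al_{r+1}))$ become $1/(G(1+\al_0+s)G(1+\al_{r+1}+t))$, that the endpoint power of $n$ becomes $\tfrac12(\al_0+s)^2+\tfrac12(\al_{r+1}+t)^2$, and that the $V$-dependent exponential in (\ref{asDH}) reproduces exactly the exponent in (\ref{TH}); these agreements confirm the parameter identification.

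The remaining work is translation and collection. I would pass to the circle variables via $V(e^{i\th})=U(\cos\th)$, $z_j=e^{i\th_j}$, $\lb_j=\cos\th_j$, and $\arcsin\lb_j=\tfrac\pi2-\th_j$; use the evenness of $f$, i.e.\ $V_k=V_{-k}$ and hence $b_-(z)=b_+(z^{-1})$, to bring the Wiener--Hopf factors to the form displayed in (\ref{TH}); apply $G(1+x)=\Ga(x)G(x)$ to match the $G$-function factors; and rewrite $|\lb_j-\lb_k|$ and $|\lb_j\lb_k-1+\sqrt{(1-\lb_j^2)(1-\lb_k^2)}|$ in terms of $|z_j-z_k|$ and $|z_j-z_k^{-1}|$ by elementary trigonometric identities.

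The main obstacle will be the careful tracking of the elementary prefactors --- in particular the powers of $2$ and of $\pi$, and the assembly of the phase factors. The power of $2$ is the most delicate: the $2^{-n^2+n}$ hidden in $D_n(1)$, the $4^{-(\cdots)}$ and $(2\pi)^{(\cdots)}$ in (\ref{asDH}), and the $n$-dependent power of $2$ in the Lemma \ref{HTH} prefactor (the analogue of the $4^{(n-1)^2}$ in (\ref{131})) must be combined so that the $n^2$-terms cancel and only the linear exponent $(1-s-t)n+p$ together with the constant $-\tfrac12(\al_0+\al_{r+1}+s+t)^2+\tfrac12(\al_0+\al_{r+1}+s+t)$ survives. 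Likewise, folding the phase $e^{2i(n+A)\sum_j\bt_j\arcsin\lb_j+i\pi\sum(\cdots)}$ of (\ref{asDH}), with $A=\tfrac12(\al_0+\al_{r+1}+s+t)+\sum_{j=1}^r\al_j=\wt A$, into the combination $z_j^{2\wt A\bt_j}$ and the $e^{-i\pi[\cdots]}$ factor of (\ref{TH}) requires care with the branch conventions. None of these steps is conceptually hard once the reduction of the first paragraph is in place, but the accounting is lengthy, which is presumably why the argument is described as a straightforward combination of Lemma \ref{HTH} and Theorem \ref{asHankel} rather than written out in full.
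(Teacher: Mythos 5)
Your proposal follows exactly the paper's route: the paper proves Theorem \ref{T+H} precisely as ``a straightforward combination of Lemma \ref{HTH} and Theorem \ref{asHankel} (aided by formulae of Section \ref{Hankel})'', and your parameter identification (endpoint exponents $\tfrac12(\al_0+s)$, $\tfrac12(\al_{r+1}+t)$, interior data unchanged, $A=\wt A$) is correct. The only bookkeeping source you did not name explicitly is the multiplicative constant (a power of $2$ times a phase, cf.\ the constant $C$ in Section \ref{Hankel}) that must be extracted when rewriting $\hat w$ in the canonical form (\ref{wFH}); its $n$-th power supplies the remaining $n$-linear exponent of $2$ needed to reach $(1-s-t)n+p$, but this falls under the ``careful tracking of elementary prefactors'' you already flagged.
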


\begin{remark}
For the case $\Re\bt_j=1/2$ see Remark \ref{b12} above.
\end{remark}

\begin{remark}
For the determinant $\det (f_{j-k}+f_{j+k+1})_{j,k=0}^{n-1}$ in the case
when the symbol has no $\al$ singularities at $z=\pm 1$ and $|\Re\bt_j|<1/2$,
the asymptotics
were obtained in \ci{BEsym} (see also \ci{BEnonsym} if $f$ is non-even, $\al_j=0$).
Note that for symbols without singularities, i.e. for $f(z)=e^{V(z)}$,
the asymptotics of all the above Toeplitz+Hankel determinants
(and related more general ones) were found recently in \ci{BE4}.
\end{remark}

The results in this paper make it possible to justify various asymptotic formulae that
were obtained previously in the literature on the basis of
the Basor-Tracy conjecture and conjectures on Hankel and Toeplitz+Hankel determinants.
One such application arises in the framework of the
random matrix approach in the theory of the Riemann zeta-function and other $L$-functions
(see \cite{Keating1} for a recent survey). Define
\begin{equation}\label{fL}
\phi(z)=\left|2\sin{\theta\over 2}\right|^{2k}e^{V(z)},\quad k\in {\mathbb N},
\end{equation}
where
\[
V(e^{i\theta})=2k\left\{\int_1^e u(y)\left(\sum_{j=-\infty}^\infty
\mathrm{Ci}(|\theta+2\pi j|\ln y\ln X)dy\right)-\ln\left|2\sin{\theta\over 2}\right|\right\},
\]
\[
\mathrm{Ci}(z)=-\int_z^\infty\frac{\cos t}{t}dt,
\]
and  $u(y)$ is a smooth nonnegative function supported on $[e^{1-1/X},e]$ and of total mass one.
Consider the following average over the orthogonal  group $SO(2n)$,
\begin{equation}\label{so}
E_{SO(2n)}\left(\prod_{j=1}^n \phi(e^{i\theta_j})\right).
\end{equation}
This object was introduced  by Bui and  Keating  in  \cite{BK} (following \cite{GHK})
as the random matrix counterpart of a key term contributing to
the mean values of certain  Dirichlet  $L$-functions in
the Katz-Sarnak orthogonal family. The issue is the large $n$ and large $X$ behavior
of this average.  The question can be resolved with the help of Theorem \ref{T+H}.
Indeed, one can observe   that
\[
E_{SO(2n)}\left(\prod_{j=1}^n \phi(e^{i\theta_j})\right) =
\frac{1}{2}\det (\phi_{j-k}+\phi_{j+k})_{j,k=0}^{n-1},
\]
and that symbol (\ref{fL}) is of Fisher-Hartwig type with a single
$\alpha$-singularity at $z_0=1$, and $\alpha_0=k$. A direct
application of Theorem \ref{T+H}  leads then, for $X$ large, to the
following asymptotic behavior as $n\to\infty$ of the average
(\ref{so}),
\begin{equation}\label{orth}
E_{SO(2n)}\left(\prod_{j=1}^n \phi(e^{i\theta_j})\right)\sim
G(1+k)\left(\frac{\Gamma(1+2k)} {G(1+2k)\Gamma(1+k)} \right)^{1/2}
\left(\frac{ 2n}{e^\gamma\ln X}\right)^{k(k-1)/2},
\end{equation}
where $\gamma$ is Euler's constant.
Formula (\ref{orth}) is precisely the asymptotic form conjectured by Bui and Keating in \cite{BK}.
In a similar way, Theorem \ref{T+H}  provides
a justification of an analogous conjecture of
Bui and Keating in \cite{BK} concerning  the average of the same product
$\prod_{j=1}^n \phi(e^{i\theta_j})$  over the symplectic group. In the context of the random
matrix approach in number theory \cite{Keating1}, this means that
Theorem \ref{T+H} yields asymptotic formulae which can be used to analyze
number-theoretical conjectures in the orthogonal
and symplectic cases \ci{BK}, in the same way that
the Toeplitz formulae of Theorem \ref{asTop} were used in the unitary case in \ci{GHK}.
(Note that in \cite{GHK}, the authors only need the version of Theorem \ref{asTop} originally proven
by Widom \cite{W}).

Another example is the probability $P_E(n)$ of a ferromagnetic string of length $n$ in
the antiferromagnetic ground state in the $XY$ spin chain. For a certain range of parameters,
$P_E(n)$ is given by a Toeplitz determinant with 2 $\bt$-singularities such that $|||\bt|||=1$.
Thus, Theorem \ref{BT1} verifies the result on $P_E(n)$, $n\to\infty$, presented in \ci{FA},
which the  authors based on the Basor-Tracy conjecture.

In a similar vein, our results can be used to
justify the asymptotic results for correlators arising in the theory of the impenetrable
Bose gas, that were obtained in \ci{Ovchinnikov} on the basis of the Basor-Tracy conjecture.

\section{Orthogonal polynomials on the unit circle.
Toeplitz and Hankel determinants.}\la{opuc}

Here we present aspects of the theory of orthogonal polynomials
on the unit circle we use in this work. Some of the properties we describe
here are well-known (see, e.g. \ci{Szego}, \ci{Simon}), the others not so.
We also adapt
the theory to complex weights we need in this work, while in the literature
usually only positive weights are considered.

Let $f(z)$ be a complex-valued function integrable over the unit circle, and
let $\phi_k(z)=\chi_k z^k+\cdots$, $\widehat\phi_k(z)=\chi_k z^{k}+\cdots$,
$k=0,1,\dots$ be a system of polynomials in $z$ of degree $k$ with the same for
$\phi_k(z)$ and $\widehat\phi_k(z)$ leading coefficients $\chi_k$.
These polynomials are called orthonormal on the unit circle with weight $f(z)$ if
they satisfy (\ref{or0}).
If $f(z)$ is positive on the unit circle, it is a classical fact that
$D_n(f)\neq 0$ for all $n\ge 1$, and such a system of polynomials exists.
In general, suppose
that all the Toeplitz determinants $D_n$, $n=1,2,\dots$ (\ref{TD}) are nonzero, $D_0\equiv 1$.
Then the polynomials $\phi_k(z)$ and $\widehat\phi_k(z)$ for $k=0,1,\dots$ are given by
the explicit formulae (\ref{ef1}), (\ref{ef2}) for {\it all} $k=1,2,\dots$.
For $k=0$ set
\be\la{k=0}
 \phi_0(z)=\widehat\phi_0(z)=\chi_0=1/\sqrt{D_1}.
\ee
Relations (\ref{or0}) are then equivalent to
\be\la{or}
{1\over 2\pi}\int_0^{2\pi}\phi_k(z)\widehat\phi_m(z^{-1})f(z)d\theta=\de_{km},\qquad
z=e^{i\theta},\qquad k,m=0,1,\dots.
\ee
Thus we constructed
the system of orthogonal polynomials under condition that all the Toeplitz
determinants are nonzero. If we only know that $D_n(f)\neq 0$ for all $n\ge N_0$ with some $N_0>0$,
then we have the existence of $\phi_k(z)=\chi_k z^k+\cdots$, $\widehat\phi_k(z)=\chi_k z^{k}+\cdots$,
$\chi_k\neq 0$, satisfying (\ref{or0}) for $k=N_0,N_0+1,\dots$.

\begin{remark}
From (\ref{ef1},\ref{ef2},\ref{k=0}) we easily conclude:

a) If $f(z)$ is real on the unit circle, we have
$\widehat\phi_n(z^{-1})=
\overline{\phi_n(z)}$, $n=0,1,\dots$, on the unit circle.

b) If $f(e^{i\theta})=f(e^{-i\theta})$, then $\widehat\phi_n(z^{-1})=\phi_n(z^{-1})$.
\end{remark}

\begin{lemma}[Recurrence relations]\la{Lemma1}
Let $D_n(f)\neq 0$, $n\ge 0$.
The orthogonal polynomials satisfy the following relations for $n=0,1,\dots$:
\begin{eqnarray}\la{rr1}
\chi_n z\phi_n(z)=\chi_{n+1}\phi_{n+1}(z)-
\phi_{n+1}(0)z^{n+1}\widehat\phi_{n+1}(z^{-1});\\
\la{rr1p}
\chi_n z^{-1}\widehat\phi_n(z^{-1})=
\chi_{n+1}\widehat\phi_{n+1}(z^{-1})-\widehat\phi_{n+1}(0)z^{-n-1}\phi_{n+1}(z);\\
\la{rr2}
\chi_{n+1}z^{-1}\widehat\phi_n(z^{-1})=
\chi_n\widehat\phi_{n+1}(z^{-1})-\widehat\phi_{n+1}(0)z^{-n}\phi_n(z).
\end{eqnarray}
Moreover,
\be\la{rr3}
\chi_{n+1}^2-\chi_n^2=\phi_{n+1}(0)\widehat\phi_{n+1}(0).
\ee
\end{lemma}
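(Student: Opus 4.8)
The plan is to deduce all four identities from a single structural fact. The defining relations (\ref{or0}) say that $\phi_n$ is the unique polynomial of degree $n$ with leading coefficient $\chi_n$ that is orthogonal, in the pairing
\[
\langle a,b\rangle:={1\over 2\pi}\int_0^{2\pi}a(e^{i\th})\,b(e^{-i\th})\,f(e^{i\th})\,d\th,
\]
to $1,z,\dots,z^{n-1}$ on the right (that is, $\langle\phi_n,z^j\rangle=\chi_n^{-1}\de_{jn}$ for $0\le j\le n$), and dually $\widehat\phi_n$ is characterized by $\langle z^j,\widehat\phi_n\rangle=\chi_n^{-1}\de_{jn}$, $0\le j\le n$. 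I would first prove (\ref{rr1}), then read off (\ref{rr3}) from it, next obtain (\ref{rr1p}) by the symmetry $\phi\lra\widehat\phi$, $z\to z^{-1}$, and finally get (\ref{rr2}) by elimination.

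For (\ref{rr1}), set $\phi_{n+1}^*(z):=z^{n+1}\widehat\phi_{n+1}(z^{-1})$, a polynomial of degree $n+1$ with constant term $\chi_{n+1}$ and leading coefficient $\widehat\phi_{n+1}(0)$. A short computation shows that $z\phi_n(z)$, $\phi_{n+1}(z)$ and $\phi_{n+1}^*(z)$ all lie in
\[
W:=\{Q:\deg Q\le n+1,\ \langle Q,z^j\rangle=0\ \text{for}\ j=1,\dots,n\}.
\]
Indeed $\langle z\phi_n,z^j\rangle=\langle\phi_n,z^{j-1}\rangle=0$ for $1\le j\le n$, $\phi_{n+1}$ is orthogonal to $z^j$ for $0\le j\le n$, and $\langle\phi_{n+1}^*,z^j\rangle=\langle z^{\,n+1-j},\widehat\phi_{n+1}\rangle=\chi_{n+1}^{-1}\de_{j0}$, which also vanishes for $1\le j\le n$. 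The space $W$ is cut out by $n$ conditions inside the $(n+2)$-dimensional space of polynomials of degree $\le n+1$, and the matrix of these conditions (entries $f_{j-l}$, $j=1,\dots,n$, $l=0,\dots,n+1$) contains the $n\times n$ Toeplitz minor $(f_{j-l})_{j,l=1}^n$ of determinant $D_n\neq0$ by hypothesis; hence the conditions are independent and $\dim W=2$.

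Next I would check that $\phi_{n+1}$ and $\phi_{n+1}^*$ form a basis of $W$ \emph{without} invoking (\ref{rr3}): applying the two functionals $\langle\cdot,z^0\rangle$ and $\langle\cdot,z^{n+1}\rangle$ produces the value matrix $\left(\begin{smallmatrix}0&\chi_{n+1}^{-1}\\ \chi_{n+1}^{-1}&0\end{smallmatrix}\right)$, of nonzero determinant $-\chi_{n+1}^{-2}$. Therefore $z\phi_n=A\phi_{n+1}+B\phi_{n+1}^*$ for unique constants $A,B$. Applying $\langle\cdot,z^{n+1}\rangle$ (which annihilates $\phi_{n+1}^*$) gives $\chi_n^{-1}=\langle\phi_n,z^n\rangle=A\chi_{n+1}^{-1}$, so $A=\chi_{n+1}/\chi_n$; evaluating the identity at $z=0$ (where $z\phi_n$ and $\phi_{n+1}^*$ have constant terms $0$ and $\chi_{n+1}$) gives $0=A\phi_{n+1}(0)+B\chi_{n+1}$, whence $B=-\phi_{n+1}(0)/\chi_n$. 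This is exactly (\ref{rr1}). Comparing the coefficients of $z^{n+1}$ on both sides of (\ref{rr1}) then yields $\chi_n^2=\chi_{n+1}^2-\phi_{n+1}(0)\widehat\phi_{n+1}(0)$, which is (\ref{rr3}).

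Finally, (\ref{rr1p}) is (\ref{rr1}) under the interchange of $\phi$ and $\widehat\phi$ together with $z\to z^{-1}$, and is proved by the identical argument using the second family of relations in (\ref{or0}); and (\ref{rr2}) follows algebraically by writing (\ref{rr1}) and $z^{n+1}\times$(\ref{rr1p}) in terms of $z\phi_n$, $\phi_{n+1}$ and the reversed polynomials, eliminating $\phi_{n+1}$, and simplifying with (\ref{rr3}) (the degenerate case $\widehat\phi_{n+1}(0)=0$ being immediate). The step I expect to be the crux is extracting (\ref{rr3}): the orthogonality controls only the moments $\langle\phi_n,z^j\rangle$ for $0\le j\le n$, and a direct attempt to pin down the constants in (\ref{rr1}) reintroduces the uncontrolled ``next moment'' $\langle\phi_n,z^{-1}\rangle$ and couples (\ref{rr1}) to (\ref{rr3}) circularly. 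The dimension count $\dim W=2$ together with the explicit $2\times2$ value matrix is precisely what breaks this circularity, allowing (\ref{rr1}) to be established with explicit coefficients first and (\ref{rr3}) to be read off afterwards.
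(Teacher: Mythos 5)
Your proof is correct and follows essentially the same route as the paper: establish (\ref{rr1}) first by an orthogonality argument, read off (\ref{rr3}) from the coefficient of $z^{n+1}$, obtain (\ref{rr1p}) by the $\phi\leftrightarrow\widehat\phi$ symmetry, and derive (\ref{rr2}) by eliminating $\phi_{n+1}$ with the help of (\ref{rr3}). The only difference is packaging: the paper shows the residual $g(z)=\chi_n\phi_n(z)-\chi_{n+1}z^{-1}\phi_{n+1}(z)+\phi_{n+1}(0)z^n\widehat\phi_{n+1}(z^{-1})$ is a polynomial of degree $n$ and expands it in the basis $\{\phi_k\}_{k=0}^n$, with all coefficients vanishing by (\ref{or0}), whereas you place $z\phi_n(z)$ in the two-dimensional space $W$ and compute its coordinates in the basis $\{\phi_{n+1},\phi_{n+1}^*\}$ --- the underlying orthogonality computations are identical.
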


\begin{proof}
To prove (\ref{rr1}) consider the function
\[
g(z)=\chi_n\phi_n(z)-\chi_{n+1}z^{-1}\phi_{n+1}(z)+
\phi_{n+1}(0)z^n \widehat\phi_{n+1}(z^{-1}).
\]
We see that it has zero coefficient at $z^{-1}$ and so $g(z)$ is a polynomial
in $z$ of degree $n$. Therefore we can write
\[
g(z)=\sum_{k=0}^n c_k \phi_k(z),
\]
where $c_k={1\over 2\pi}\int_0^{2\pi}g(z)\widehat\phi_k(z^{-1})f(z)d\theta$.
This integral is easy to calculate using the orthogonality in the form of (\ref{or0})
(for example,
${1\over 2\pi}\int_0^{2\pi}\phi_{n+1}(z)z^{-1}\widehat\phi_k(z^{-1})f(z)d\theta=
(\chi_n/\chi_{n+1})\de_{nk}$), and we obtain that all $c_k=0$. Thus $g(z)\equiv 0$
and (\ref{rr1}) is proved.

Similarly, considering $g_1(z)=\chi_n\widehat\phi_n(z^{-1})-\chi_{n+1}z\widehat\phi_{n+1}(z^{-1})+\widehat\phi_{n+1}(0)z^{-n}\phi_{n+1}(z)$ we show that $g_1(z)\equiv 0$, which proves equation (\ref{rr1p}).

Collecting the coefficients at $z^{n+1}$ in (\ref{rr1}) we obtain (\ref{rr3}).

Finally, multiplying (\ref{rr1}) by $z^{-n-1}\widehat\phi_{n+1}(0)$, and (\ref{rr1p}) by $\chi_{n+1}$,
adding the resulting equations together and using (\ref{rr3}), we obtain
(\ref{rr2}).
\end{proof}

\begin{lemma}[Christoffel-Darboux identity]
Let $D_n(f)\neq 0$, $n\ge 0$.
For any $z$, $a\neq 0$, $n=1,2,\dots$,
\be\la{CD1}
(1-a^{-1}z)\sum_{k=0}^{n-1}\widehat\phi_k(a^{-1})\phi_k(z)=
a^{-n}\phi_n(a) z^n \widehat\phi_n(z^{-1})-\widehat\phi_n(a^{-1})\phi_n(z).
\ee
For any $z\neq 0$, $n=1,2,\dots$,
\be\la{CD2}
\sum_{k=0}^{n-1}\widehat\phi_k(z^{-1})\phi_k(z)=
-n\phi_n(z)\widehat\phi_n(z^{-1}) +z\left(\widehat\phi_n(z^{-1}){d\over dz}\phi_n(z)-
\phi_n(z){d\over dz}\widehat\phi_n(z^{-1})\right).
\ee
\end{lemma}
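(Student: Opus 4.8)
The plan is to prove the first identity (\ref{CD1}) by a telescoping argument built on the recurrences of Lemma \ref{Lemma1}, and then to obtain the confluent identity (\ref{CD2}) from it by letting $a\to z$. For (\ref{CD1}) I would introduce
\[
T_k=a^{-k}\phi_k(a)z^k\widehat\phi_k(z^{-1})-\widehat\phi_k(a^{-1})\phi_k(z),
\]
which is precisely the right-hand side of (\ref{CD1}) when $k=n$. The base case is immediate: by (\ref{k=0}) the polynomials $\phi_0,\widehat\phi_0$ equal the constant $\chi_0$, so $T_0=\chi_0^2-\chi_0^2=0$. It then suffices to establish the single-step identity
\[
T_{k+1}-T_k=(1-a^{-1}z)\,\widehat\phi_k(a^{-1})\phi_k(z),
\]
since summing over $k=0,\dots,n-1$ and using $T_0=0$ collapses the left side to $T_n$ and reproduces (\ref{CD1}) on the right.

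To verify the single-step identity I would substitute the recurrences into $T_{k+1}$: use (\ref{rr1}) at the points $z$ and $a$ to rewrite $\phi_{k+1}(z)$ and $\phi_{k+1}(a)$, and (\ref{rr1p}) at $z$ and $a$ to rewrite $\widehat\phi_{k+1}(z^{-1})$ and $\widehat\phi_{k+1}(a^{-1})$. This expresses $T_{k+1}$ entirely in terms of the degree-$k$ polynomials evaluated at $z,a,z^{-1},a^{-1}$, together with the scalars $\chi_k,\chi_{k+1},\phi_{k+1}(0),\widehat\phi_{k+1}(0)$. I expect this to be the main obstacle: the substitution generates numerous cross terms, and only after the terms quadratic in $\phi_{k+1}(0)$ and $\widehat\phi_{k+1}(0)$ are combined does the relation $\chi_{k+1}^2-\chi_k^2=\phi_{k+1}(0)\widehat\phi_{k+1}(0)$ from (\ref{rr3}) reduce the coefficients to the required form, after which the remaining pieces reassemble into $T_k$ and $(1-a^{-1}z)\widehat\phi_k(a^{-1})\phi_k(z)$.

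Finally I would deduce (\ref{CD2}) as the confluent limit $a\to z$ of (\ref{CD1}). Since $1-a^{-1}z=(a-z)/a$ and the two terms of $T_n$ coincide at $a=z$, so that $T_n$ vanishes there, dividing (\ref{CD1}) by $1-a^{-1}z$ gives
\[
\sum_{k=0}^{n-1}\widehat\phi_k(a^{-1})\phi_k(z)=\frac{a\,T_n(a)}{a-z},
\]
whose limit as $a\to z$ equals $z\,\partial_a T_n(a)\big|_{a=z}$ by a Taylor expansion of $T_n$ about $a=z$. Differentiating, the factor $a^{-n}$ produces the term $-n\phi_n(z)\widehat\phi_n(z^{-1})$, while the chain-rule identity $\partial_a\widehat\phi_n(a^{-1})\big|_{a=z}=\frac{d}{dz}\widehat\phi_n(z^{-1})$ converts the remaining derivatives into the Wronskian-type expression $z\left(\widehat\phi_n(z^{-1})\frac{d}{dz}\phi_n(z)-\phi_n(z)\frac{d}{dz}\widehat\phi_n(z^{-1})\right)$, which is exactly (\ref{CD2}). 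This last step is routine once (\ref{CD1}) is in hand.
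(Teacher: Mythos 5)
Your proposal is correct and follows essentially the same route as the paper: both establish the telescoping identity $T_{k+1}-T_k=(1-a^{-1}z)\widehat\phi_k(a^{-1})\phi_k(z)$ by substituting the recurrences of Lemma \ref{Lemma1} and invoking (\ref{rr3}) to cancel the cross terms (the paper merely runs the substitution in the opposite direction, expanding $z\phi_k(z)$ and $a^{-1}\widehat\phi_k(a^{-1})$ upward to level $k+1$ rather than reducing $T_{k+1}$ downward), and both obtain (\ref{CD2}) as the confluent limit $a\to z$.
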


\begin{proof}
Consider $(1-a^{-1}z)\widehat\phi_k(a^{-1})\phi_k(z)$, for a fixed
$k\ge0$.
Using the recurrence relation (\ref{rr1}) with $n=k$ to express $z\phi_k(z)$ in
terms of $\phi_{k+1}(z)$ and $\widehat\phi_{k+1}(z^{-1})$, and using (\ref{rr2})
with $n=k$ to express $a^{-1}\widehat\phi_k(a^{-1})$, we obtain:
\begin{multline}\nonumber
(1-a^{-1}z)\widehat\phi_k(a^{-1})\phi_k(z)=
\widehat\phi_k(a^{-1})\phi_k(z)-\widehat\phi_{k+1}(a^{-1})\phi_{k+1}(z)\\
+{\phi_{k+1}(0)\over \chi_{k+1}}z^{k+1}\widehat\phi_{k+1}(z^{-1})a^{-k-1}a^{k+1}
\widehat\phi_{k+1}(a^{-1})+
{\widehat\phi_{k+1}(0)\over \chi_{k}}a^{-k}\phi_{k}(a)z^{k+1}z^{-k-1}
\widehat\phi_{k+1}(z)\\
-{\phi_{k+1}(0)\widehat\phi_{k+1}(0)\over \chi_{k}\chi_{k+1}}
a^{-k}\phi_{k}(a)z^{k+1}\widehat\phi_{k+1}(z^{-1}).
\end{multline}
Now expressing in the third summand $a^{k+1}\widehat\phi_{k+1}(a^{-1})$ from
(\ref{rr1}) with $n=k$ and $z=a$, and in the fourth summand
$z^{-k-1}\widehat\phi_{k+1}(z)$ from (\ref{rr1p}), and by using (\ref{rr3}),
we obtain
\begin{multline}\nonumber
(1-a^{-1}z)\widehat\phi_k(a^{-1})\phi_k(z)=
\widehat\phi_k(a^{-1})\phi_k(z)-\widehat\phi_{k+1}(a^{-1})\phi_{k+1}(z)\\
+a^{-k-1}\phi_{k+1}(a)z^{k+1}\widehat\phi_{k+1}(z^{-1})-
a^{-k}\phi_{k}(a)z^{k}\widehat\phi_{k}(z^{-1}).
\end{multline}
Summing this over $k$ from $k=0$ to $n-1$ yields (\ref{CD1}).
Taking the limit $a\to z$ in  (\ref{CD1}) gives  (\ref{CD2}).
\end{proof}

The next lemma allows us to represent the Toeplitz determinant with
symbol $z^\ell f(z)$,
where $\ell$ is any integer, in terms of the one with symbol $f(z)$.

\begin{lemma}\la{Chr}
Let the Toeplitz determinants $D_n(f)$ with symbol $f(z)$ be nonzero
for all $n\ge N_0$ with a fixed $N_0\ge 0$. Let
$\Phi_k(z)=\phi_k(z)/\chi_k$, $\widehat\Phi_k(z)=\widehat\phi_k(z)/\chi_k$, $k=N_0,N_0+1,\dots$
be the system of monic polynomials
orthogonal on the unit circle with the weight $f(z)$.
Fix an integer $\ell>0$. Then if
\[
F_k=\left|
\begin{matrix}
\Phi_k(0)& \Phi_{k+1}(0) & \cdots & \Phi_{k+\ell-1}(0)\cr
{d\over dz}\Phi_k(0) & {d\over dz}\Phi_{k+1}(0)& \cdots & {d\over dz}\Phi_{k+\ell-1}(0)\cr
\vdots & \vdots &  & \vdots\cr
 {d^{\ell-1}\over dz^{\ell-1}}\Phi_k(0) & {d^{\ell-1}\over dz^{\ell-1}}\Phi_{k+1}(0)& \cdots &
{d^{\ell-1}\over dz^{\ell-1}}\Phi_{k+\ell-1}(0)
\end{matrix}
\right|\neq 0, \qquad k=N_0,N_0+1,\dots, n-1,
\]
we have
\be\la{213}
D_n(z^\ell f(z))={(-1)^{\ell n} F_n\over \prod_{j=1}^{\ell-1}j!}
D_n(f(z)),\qquad n\ge N_0.
\ee
In particular, for $\ell=1$, if $\phi_k(0)\neq 0$, $k=N_0,N_0+1,\dots,n-1$, we have
\be\la{DD1}
D_n(z f(z))=(-1)^n{\phi_n(0)\over \chi_n}D_n(f(z)),\qquad n\ge N_0.
\ee

Furthermore, if
\[
\widehat F_k=\left|
\begin{matrix}
\widehat\Phi_k(0)& \widehat\Phi_{k+1}(0) & \cdots &  \widehat\Phi_{k+\ell-1}(0)\cr
{d\over dz} \widehat\Phi_k(0) & {d\over dz} \widehat\Phi_{k+1}(0)& \cdots &
{d\over dz} \widehat\Phi_{k+\ell-1}(0)\cr
\vdots & \vdots &  & \vdots \cr
 {d^{\ell-1}\over dz^{\ell-1}} \widehat\Phi_k(0) & {d^{\ell-1}\over dz^{\ell-1}}\widehat\Phi_{k+1}(0)&
\cdots & {d^{\ell-1}\over dz^{\ell-1}} \widehat\Phi_{k+\ell-1}(0)
\end{matrix}
\right|\neq 0, \qquad k=N_0,N_0+1,\dots, n-1,
\]
we have
\be
D_n(z^{-\ell} f(z))= {(-1)^{\ell n} \widehat F_n\over \prod_{j=1}^{\ell-1}j!}
D_n(f(z)),\qquad n\ge N_0.
\ee
In particular, for $\ell=1$, if $ \widehat\phi_k(0)\neq 0$, $k=N_0,N_0+1,\dots,n-1$, we have
\be\la{DD-1}
D_n(z^{-1} f(z))=(-1)^n{\widehat\phi_n(0)\over \chi_n}D_n(f(z)),\qquad n\ge N_0.
\ee
\end{lemma}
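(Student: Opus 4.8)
The plan is to prove the central identity (\ref{213}) and to obtain the remaining three formulas as specializations and by a symmetry. First I would record that (\ref{213}) is equivalent to
\[
\frac{D_n(z^\ell f)}{D_n(f)}=(-1)^{\ell n}\det\bigl(c_i(\Phi_{n+j})\bigr)_{i,j=0}^{\ell-1},
\]
where $c_i(P)$ denotes the coefficient of $z^i$ in a polynomial $P$, because $\tfrac{d^i}{dz^i}\Phi_k(0)=i!\,c_i(\Phi_k)$ and $\prod_{i=0}^{\ell-1}i!=\prod_{j=1}^{\ell-1}j!$, so $F_n=\bigl(\prod_{j=1}^{\ell-1}j!\bigr)\det(c_i(\Phi_{n+j}))$. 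The statement for $z^{-\ell}f$ is the mirror image: replacing $f(z)$ by $f(z^{-1})$ transposes each Toeplitz matrix (leaving $D_n$ unchanged) and interchanges the two families in (\ref{ef1})–(\ref{ef2}), so $D_n(z^{-\ell}f)=D_n(z^\ell f(z^{-1}))$ and the $\widehat F_n$ formula follows from the $z^\ell$ case verbatim with $\phi_k\leftrightarrow\widehat\phi_k$. The displays (\ref{DD1}), (\ref{DD-1}) are the case $\ell=1$; for these one may alternatively expand (\ref{ef1}) along the row $(1,z,\dots,z^k)$ at $z=0$ (resp. (\ref{ef2}) along its last column at $z^{-1}=0$), the surviving $k\times k$ minor being exactly the Toeplitz matrix of $zf$ (resp. $z^{-1}f$), which together with (\ref{chiD}) gives $\phi_k(0)/\chi_k=(-1)^kD_k(zf)/D_k(f)$ at once.

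For general $\ell$ I would view $D_n(z^\ell f)=\det(f_{j-k-\ell})_{j,k=0}^{n-1}$ as the $n\times n$ off-diagonal block of $T_{n+\ell}:=(f_{j-k})_{j,k=0}^{n+\ell-1}$ on rows $\{0,\dots,n-1\}$ and columns $\{\ell,\dots,n+\ell-1\}$. The key tool is the expression of the inverse Toeplitz matrix through the orthogonal polynomials: the biorthonormality (\ref{or0}) says $B\,T_N\,A^{T}=I$, where $A=(c_p(\phi_m))_{m,p}$ and $B=(c_q(\widehat\phi_m))_{m,q}$ are lower triangular with diagonal entries $\chi_m$, whence $T_N^{-1}=A^{T}B$, i.e.
\[
\bigl(T_N^{-1}\bigr)_{p,q}=\sum_{m\ge\max(p,q)}c_p(\phi_m)\,c_q(\widehat\phi_m).
\]
I would then apply Jacobi's identity relating complementary minors of a matrix and its inverse to the block above, converting its determinant into $(-1)^{\sigma}D_{n+\ell}(f)$ times the complementary $\ell\times\ell$ minor of $T_{n+\ell}^{-1}$, namely the one on rows $\{0,\dots,\ell-1\}$ and columns $\{n,\dots,n+\ell-1\}$.

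It remains to evaluate that minor. Since here $p\le\ell-1<n\le q$, the sum runs only over $m\in\{n,\dots,n+\ell-1\}$ and factors as $U\,W$, with $U=(c_p(\phi_{n+m'}))_{p,m'=0}^{\ell-1}$ and $W=(c_{n+q'}(\widehat\phi_{n+m'}))_{m',q'=0}^{\ell-1}$; the latter is lower triangular with diagonal $\chi_{n+m'}$, so $\det W=\prod_{m'=0}^{\ell-1}\chi_{n+m'}$, while pulling the leading coefficient out of each column of $U$ gives $\det U=\prod_{m'=0}^{\ell-1}\chi_{n+m'}\cdot\det(c_p(\Phi_{n+m'}))$. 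Hence the minor equals $\bigl(\prod_{m'=0}^{\ell-1}\chi_{n+m'}\bigr)^{2}\det(c_p(\Phi_{n+m'}))$. Combined with $D_{n+\ell}(f)/D_n(f)=\bigl(\prod_{m'=0}^{\ell-1}\chi_{n+m'}\bigr)^{-2}$, which telescopes from (\ref{chiD}), the powers of $\chi$ cancel exactly, and the reduction follows once the sign is checked: with $\alpha=\{1,\dots,n\}$ and $\beta=\{\ell+1,\dots,n+\ell\}$ one has $\sigma(\alpha)+\sigma(\beta)=2\binom{n+1}{2}+n\ell\equiv \ell n\pmod 2$, using $\tfrac{(n+\ell)(n+\ell+1)}2-\tfrac{n(n+1)}2-\tfrac{\ell(\ell+1)}2=n\ell$, so $(-1)^{\sigma}=(-1)^{\ell n}$.

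I expect the main obstacle to lie not in the algebra (the cancellation of the $\chi$'s and the parity $\sigma\equiv \ell n$ are automatic) but in the hypothesis: the factorization $T_N^{-1}=A^{T}B$ presupposes $D_k(f)\neq0$ for all $k\le N$, whereas the lemma only assumes $D_k\ne0$ for $k\ge N_0$. I would remove this gap by a density argument: after clearing the denominators $D_n,\dots,D_{n+\ell-1}$ that enter the $\Phi_k$ through (\ref{ef1}), both sides of (\ref{213}) are polynomials in the finitely many Fourier coefficients $f_{-(n+\ell-1)},\dots,f_{n+\ell-1}$; they agree on the dense locus where all leading minors are nonzero, hence wherever the relevant determinants do not vanish. (Alternatively, one notes that only the polynomials $\phi_m,\widehat\phi_m$ with $m\ge n\ge N_0$ actually occur in the minor, so the needed entries of $T_{n+\ell}^{-1}$ are well defined directly; but the continuity argument is cleaner to state.)
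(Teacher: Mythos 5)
Your proof is correct, but it takes a genuinely different route from the paper's. The paper proves the $\ell=1$ case by a Christoffel-type construction: it exhibits the monic orthogonal polynomials for the shifted weight $zf(z)$ explicitly as $2\times 2$ determinants in $\phi_k,\phi_{k+1}$, computes their normalizations $h_k=-\frac{\phi_{k+1}(0)}{\phi_k(0)}\frac{1}{\chi_k\chi_{k+1}}$, and telescopes $D_n(zf)=\prod_{k=0}^{n-1}h_k$; the general $\ell$ is left as ``a simple exercise,'' and the relaxation of the hypothesis from $D_k\neq0$ for all $k\ge 0$ to $k\ge N_0$ is achieved by moving the evaluation point $0$ to a generic $t$ and passing to the limit, supplemented by a continuity argument in $\al_j$, $\bt_j$. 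You instead realize $D_n(z^\ell f)$ as an off-diagonal $n\times n$ block of $T_{n+\ell}$, use the biorthogonal factorization $T_N^{-1}=A^TB$ coming from (\ref{or0}), and apply Jacobi's complementary-minor identity; the resulting $\ell\times\ell$ minor of $T_{n+\ell}^{-1}$ factors as $UW$ with $W$ triangular, the powers of $\chi$ cancel against $D_{n+\ell}/D_n$ via (\ref{chiD}), and the parity count correctly yields $(-1)^{\ell n}$. I checked the index bookkeeping (the sum in $(T_{n+\ell}^{-1})_{p,q}$ indeed collapses to $m\in\{n,\dots,n+\ell-1\}$ since $q\ge n$, and $F_n=\prod_{j=1}^{\ell-1}j!\cdot\det(c_i(\Phi_{n+j}))$), and it all works. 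What your approach buys is a uniform treatment of all $\ell$ at once with no induction, plus a cleaner and more general handling of the $N_0>0$ case: after clearing the denominators $D_n\cdots D_{n+\ell-1}$ both sides of (\ref{213}) are polynomials in finitely many Fourier coefficients, so agreement on the Zariski-dense locus where all leading principal minors are nonzero suffices. (Note this density step is genuinely needed even to state the factorization $T_N^{-1}=A^TB$, which presupposes $D_k\neq0$ for all $k\le N$ --- you flag this correctly.) What the paper's route buys in exchange is an explicit Christoffel formula for the orthogonal polynomials of the shifted weight, which is of independent interest; your side remark deriving (\ref{DD1}) and (\ref{DD-1}) by expanding (\ref{ef1})--(\ref{ef2}) along the monomial row is a nice additional shortcut not present in the paper.
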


\begin{proof}
We give the proof for $\ell=1$; the generalization is a simple exercise.
Recall that since $D_n\neq 0$, $n=N_0,N_0+1,\dots$,
the polynomials $\phi_n(z)=\chi_n z^n+\dots$, $\chi_n\neq 0$, exist for
$n=N_0,N_0+1,\dots$.

Assume first that $N_0=0$.
Given the polynomials $\phi_k(z)$ related to the weight $f(z)$, we will need the ones
corresponding to the weight $z f(z)$. An analogous construction for polynomials
orthogonal on the real line is known as Christroffel's formula
(see \ci{Szego}, p. 333).
Namely, define $q_k(z)$ by the expression:
\be\la{qdef}
z q_k(z)=\left|
\begin{matrix}
\phi_k(z) & \phi_{k+1}(z)\cr
\phi_k(0) & \phi_{k+1}(0)
\end{matrix}
\right|.
\ee
We see immediately that $q_k(z)$ is a polynomial, and if $\phi_k(0)\neq 0$,
it has degree $k$ with leading coefficient $-\chi_{k+1}\phi_k(0)$.
Moreover, by orthogonality,
\[
\int_0^{2\pi}q_k(z) z^{-j} zf(z)d\theta=0,\qquad j=0,1,\dots,k-1.
\]
For $j=k$,
\[
{1\over 2\pi}\int_0^{2\pi}q_k(z) z^{-k} zf(z)d\theta=
\left|
\begin{matrix}
1/\chi_k & 0\cr
\phi_k(0) & \phi_{k+1}(0)
\end{matrix}
\right|=
{\phi_{k+1}(0)\over \chi_k}.
\]
Therefore, for {\it monic} polynomials $Q_k(z)=q_k(z)/(-\chi_{k+1}\phi_k(0))$,
\[
{1\over 2\pi}\int_0^{2\pi}Q_k(z) z^{-k} zf(z)d\theta=
-{\phi_{k+1}(0)\over\phi_k(0)}{1\over \chi_k\chi_{k+1}}\equiv h_k.
\]
Thus, cf. (\ref{chiD}), the Toeplitz determinant with symbol $z f(z)$, is given by the expression
\be\la{214}
D_n(zf(z))=\prod_{k=0}^{n-1} h_k=
{\phi_n(0)\over \phi_0(0)}{(-1)^n\over \chi_0\chi_1^2\cdots\chi_{n-1}^2\chi_n}=
(-1)^n{\phi_n(0)\over \chi_n}D_n(f(z)),
\ee
which is equation (\ref{DD1}).
The case of $z^{-1}f(z)$, i.e. equation (\ref{DD-1}), is obtained similarly by
considering $\widehat\phi_k(z^{-1})$ instead of $\phi_k(z)$. Namely, we start with
the definition
\[
z^{-1}\wt q_k(z^{-1})=\left|
\begin{matrix}
\widehat\phi_k(z^{-1}) & \widehat\phi_{k+1}(z^{-1})\cr
\widehat\phi_k(0) & \widehat\phi_{k+1}(0)
\end{matrix}
\right|
\]
and proceed as before.

Suppose now that $D_n(f)\neq 0$ for $n\ge 0$, but $F_k$, $\widehat F_k$ are known to be nonzero only
for $k=N_0,N_0+1,\dots, n-1$, with some $N_0>0$.
Consider the polynomials $F_k(z)$, $\widehat F_k(z)$ defined as $F_k$, $\widehat F_k$
with the argument 0 of the orthogonal polynomials replaced with $z$. Obviously, the
set $\Om$ of possible zeros of $F_k(z)$, $\widehat F_k(z)$ for $k=0,1,\dots,N_0-1$ is finite.
We now replace (\ref{qdef}) with
\[
(z-t) q_k(z)=\left|
\begin{matrix}
\phi_k(z) & \phi_{k+1}(z)\cr
\phi_k(t) & \phi_{k+1}(t)
\end{matrix}
\right|,
\]
and choose $t$ so that $\phi_k(t)\neq 0$ for $k=0,1,\dots,N_0-1$.
Instead of (\ref{214}) consider the product
\[
\prod_{k=0}^{N_0-1}
\left[-{\phi_{k+1}(t)\over\phi_k(t)}{1\over \chi_k\chi_{k+1}}\right]
\prod_{k=N_0}^{n-1}
\left[-{\phi_{k+1}(0)\over\phi_k(0)}{1\over \chi_k\chi_{k+1}}\right]
\]
and take the limit $t\to 0$ so that $t$ avoids the set $\Om$. This proves
equation (\ref{DD1}) under the condition $D_n(f)\neq 0$, $n\ge 0$.
We extend the result to a weaker condition $D_n(f)\neq 0$, $n=N_0,N_0+1,\dots$, and thus
complete the proof of (\ref{DD1}), by using the fact that $D_n(f)\neq 0$, $n=1,2,\dots$,
for positive $f$ on the unit circle, and by a simple continuity argument in $\al_j$ and $\bt_j$
(cf. \ci{IK}).
The case of $z^{-1}f(z)$ is dealt with similarly.
\end{proof}

We will now establish a connection between a Hankel determinant with symbol
on a finite interval and a Toeplitz determinant. First we need a theorem
due to Szeg\H o on a relation between polynomials orthogonal on an
interval of the
real axis and those orthogonal on the unit circle. Szeg\H o
considered positive
weights on the unit circle, but his theorem is transferred to the general
case without much change:

\begin{lemma}\la{Pszego}
Let $f(z)$ have the property $f(e^{i\theta})=f(e^{-i\theta})$,
$0\le\theta\le 2\pi$ and let
\[
w(x)={f(e^{i\theta})\over |\sin\theta|},\qquad x=\cos\theta.
\]
Assume that $D_n(f)\neq 0$, $n\ge N_0$, $N_0\ge 0$.
Then the polynomials $p_n(x)=\varkappa_n x^n+\cdots$, $n=N_0,N_0+1,\dots$ exist which are orthonormal w.r.t.
weight $w(x)$ on $[-1,1]$, i.e.,
\[
\int_{-1}^1 p_n(x)x^m w(x)dx=\varkappa_k^{-1}\de_{nm},\qquad m=0,1,\dots,n,\quad n\ge N_0,
\]
and, for $n=N_0,N_0+1,\dots$, there hold the following expressions in terms of the polynomials $\phi_n(z)$
orthogonal w.r.t. $f(z)$ on the unit circle:
\begin{eqnarray}\la{vk}
\varkappa_n=2^n\chi_{2n}\sqrt{1-a_{2n-1}\over 2\pi},\\
\la{PPhi}
P_n(x)={1\over (2z)^n (1-a_{2n-1})}(\Phi_{2n}(z)+\Phi^*_{2n}(z)),\qquad n\ge N_0,
\end{eqnarray}
where
$P_n(z)=p_n(z)/\varkappa_n$, $\Phi_n(z)=\phi_n(z)/\chi_n$,
$\Phi^*_n(z)=z^n\Phi_n(z^{-1})$,
$a_{n-1}=-\Phi_n(0)$, $n\ge N_0$.
\end{lemma}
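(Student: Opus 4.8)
The plan is to transfer the problem from the unit circle to the interval through the substitution $x=\cos\th=(z+z^{-1})/2$, $z=e^{i\th}$, to verify that the right-hand side of (\ref{PPhi}) defines a monic polynomial of degree $n$ in $x$ that is orthogonal with respect to $w$ on $[-1,1]$, and then to read off $\varkappa_n$ from the square of its norm. The starting point is the change-of-variables identity: for any Laurent polynomial $g$, the evenness $f(e^{i\th})=f(e^{-i\th})$ makes the integrand symmetric under $\th\mapsto 2\pi-\th$, and together with $w(x)=f(e^{i\th})/|\sin\th|$ and $dx=-\sin\th\,d\th$ this yields
\[
\int_{-1}^1 g(x)\,w(x)\,dx=\int_0^\pi g(\cos\th)f(e^{i\th})\,d\th=\frac12\int_0^{2\pi}g\left(\frac{z+z^{-1}}{2}\right)f(z)\,d\th .
\]
The second ingredient I would use constantly is the evenness consequence from Remark~(b), namely $\widehat\phi_n=\phi_n$, i.e.\ $\widehat\Phi_n=\Phi_n$; this lets me rewrite $\Phi^*_{2n}(z)=z^{2n}\Phi_{2n}(z^{-1})=z^{2n}\widehat\Phi_{2n}(z^{-1})$, which is precisely the form to which the second orthogonality relation in (\ref{or0}) applies.

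First I would check that $P_n(x):=\frac{1}{(2z)^n(1-a_{2n-1})}\bigl(\Phi_{2n}(z)+\Phi^*_{2n}(z)\bigr)$ is genuinely a polynomial in $x$. The Laurent polynomial $z^{-n}(\Phi_{2n}(z)+\Phi^*_{2n}(z))$ is invariant under $z\mapsto z^{-1}$, since $\Phi^*_{2n}(z^{-1})=z^{-2n}\Phi_{2n}(z)$, so it is supported on $z^{-n},\dots,z^n$ and descends to a polynomial of degree $n$ in $x$. Its coefficient of $z^n+z^{-n}$ equals $1-a_{2n-1}$ (the $z^n$ coefficient picks up $1$ from $\Phi_{2n}$ and $\Phi_{2n}(0)=-a_{2n-1}$ from $\Phi^*_{2n}$), and since $z^n+z^{-n}=2^nx^n+\cdots$, the prefactor in (\ref{PPhi}) is chosen exactly so that $P_n$ is monic.

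Next comes orthogonality. For $0\le m\le n-1$ I would evaluate $\int_{-1}^1 P_n(x)x^m w(x)\,dx$ through the identity above, expand $z^{-n}(z+z^{-1})^m$ as a combination of the monomials $z^{m-2j-n}$, $0\le j\le m$, and pair term by term. Every exponent occurring has absolute value between $1$ and $2n-1$: the $\Phi_{2n}$ contributions are pairings of $\Phi_{2n}(z)$ against $z^{-i}$ with $1\le i\le 2n-1$, which vanish by (\ref{or0}), while the $\Phi^*_{2n}=z^{2n}\widehat\Phi_{2n}(z^{-1})$ contributions are pairings of $\widehat\Phi_{2n}(z^{-1})$ against $z^{i}$ with $1\le i\le 2n-1$, which vanish by the second relation in (\ref{or0}). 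The norm comes from the same bookkeeping with $m=n$: now $z^{-n}(z+z^{-1})^n=\sum_j\binom{n}{j}z^{-2j}$, and exactly two terms survive, $\Phi_{2n}$ paired with $z^{-2n}$ ($j=n$) and $\widehat\Phi_{2n}(z^{-1})$ paired with $z^{2n}$ ($j=0$), each equal to $\chi_{2n}^{-2}$. This gives
\[
\int_{-1}^1 P_n(x)^2 w(x)\,dx=\frac{2\pi}{4^n(1-a_{2n-1})\chi_{2n}^2},
\]
and since $p_n=\varkappa_n P_n$ must satisfy $\int_{-1}^1 p_n(x) x^n w(x)\,dx=\varkappa_n^{-1}$, inverting yields $\varkappa_n^2=2^{2n}\chi_{2n}^2(1-a_{2n-1})/(2\pi)$, which is (\ref{vk}).

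Finally, existence of $p_n$ reduces to showing this norm is nonzero, i.e.\ $\chi_{2n}\neq0$ and $1-a_{2n-1}\neq0$. The first is immediate from $D_{2n}\neq0$ via (\ref{chiD}). For the second I would combine (\ref{rr3}) with $\widehat\phi=\phi$ and (\ref{chiD}) to obtain $1-a_n^2=\chi_n^2/\chi_{n+1}^2=D_nD_{n+2}/D_{n+1}^2$, which is nonzero whenever the three determinants are, so that $a_{2n-1}\neq\pm1$ for all $n$ with the relevant indices $\ge N_0$. The combinatorial cancellations are routine; I expect the two points needing care to be the systematic use of evenness to put $\Phi^*_{2n}$ into the shape $z^{2n}\widehat\Phi_{2n}(z^{-1})$, so that both halves of (\ref{or0}) can be invoked, and the tracking of the powers of $2$ through to the final normalization (\ref{vk}).
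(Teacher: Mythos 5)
Your proof is correct and follows essentially the same route as the paper, which simply invokes the evenness consequence $\widehat\phi_n=\phi_n$ and then defers the computation to the argument of Theorem 11.5 in Szeg\H o's book; you have written out exactly that classical verification (polynomiality and monicity of $P_n$, orthogonality via both halves of (\ref{or0}), the norm computation for (\ref{vk})), and your justification of $1-a_{2n-1}\neq 0$ via (\ref{rr3}) is the same one the paper records explicitly.
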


\begin{proof}
The condition on Toeplitz determinants immediately implies the existence of the polynomials
$\phi_n(z)=\chi_n z^n+\cdots$, $\chi_n\neq 0$,
$n=N_0,N_0+1,\dots$ orthogonal w.r.t. $f(z)$ on the unit circle.
By Remark (b) above, in the present case of $f(e^{i\theta})$ being
an even function of $\theta$, we have $\widehat\phi_n(z^{-1})= \phi_n(z^{-1})$ for all $n\ge N_0$. Now the proof is the same as the argument in the proof of Theorem 11.5 in \ci{Szego}, and we obtain
\be\la{pphi}
p_n(x)={1\over \sqrt{2\pi(1-a_{2n-1})}}
(z^{-n}\phi_{2n}(z)+z^n\phi_{2n}(z^{-1})).
\ee
Note that $1-a_{2n-1}\neq 0$, $n=N_0,N_0+1,\dots$ as follows from
(\ref{rr3}) which in our case can be rewritten in the form
\[
\chi_n^2=\left(1-{\phi_{n+1}^2(0)/\chi_{n+1}^2}\right)\chi_{n+1}^2.
\]
We now easily obtain the statement of the lemma from (\ref{pphi}).
\end{proof}

Note that the recurrence relation (\ref{rr2}) can be easily
rewritten in terms of the monic polynomials
in the form (for $f(e^{i\theta})$ an even function of $\theta$ and with $z$ is replaced by $z^{-1}$):
\be\la{RR1}
\Phi_{n+1}(z)=z\Phi_n(z)-a_n\Phi^*_n(z).
\ee
Replacing here again $z$ by $z^{-1}$ and multiplying both sides by
$z^{n+1}$ we obtain
\be\la{RR11}
\Phi^*_{n+1}(z)=\Phi^*_n(z)-a_n z\Phi_n(z).
\ee

Now we are ready to formulate and prove

\begin{theorem}\la{HT}[Connection between Toeplitz and Hankel determinants]
Let $N_0\ge 0$ and $D_n(f)\neq 0$ for all $n\ge N_0$. Let the weights $f(z)$ and $w(x)$
be related as in Lemma \ref{Pszego}.
Let, moreover,
\[
D_n(w(x))=\det\left(\int_{-1}^1 x^{j+k}w(x)dx\right)_{j,k=0}^{n-1},\qquad
n=N_0,N_0+1,\dots
\]
be the Hankel determinant with symbol $w(x)$ on $[-1,1]$.
Then, with $\Phi_n(z)=\phi_n(z)/\chi_n$, we have
\be\la{HTdet}
 D_n(w(x))^2={\pi^{2n}\over 4^{(n-1)^2}}
(1+\Phi_{2n}(0))^2{D_{2n}(f(z))\over\Phi_{2n}(1)\Phi_{2n}(-1)},\qquad
n=N_0,N_0+1,\dots
\ee
\end{theorem}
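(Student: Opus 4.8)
The plan is to evaluate $D_n(w)$ directly as a product of the leading coefficients $\varkappa_k$ of the orthonormal polynomials $p_k$ on $[-1,1]$, and then to translate every factor back onto the unit circle by means of Lemma~\ref{Pszego}. First I would use the classical identity $\varkappa_k^2=D_k(w)/D_{k+1}(w)$ (the interval analogue of (\ref{chiD})), which telescopes to $D_n(w)=\prod_{k=0}^{n-1}\varkappa_k^{-2}$. Inserting $\varkappa_k^2=4^k\chi_{2k}^2(1-a_{2k-1})/(2\pi)$ from (\ref{vk}) and squaring yields
\[
D_n(w)^2=\frac{\pi^{2n}}{4^{\,n^2-2n}}\;
\frac{1}{\prod_{k=0}^{n-1}\chi_{2k}^4\,\prod_{k=0}^{n-1}(1-a_{2k-1})^2},
\]
so the task reduces to rewriting the two denominator products in terms of $D_{2n}(f)$ and the boundary values $\Phi_{2n}(0),\Phi_{2n}(\pm1)$.

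For the product of $\chi_{2k}^2$ I would exploit the even-weight relation $\chi_n^2=(1-a_n^2)\chi_{n+1}^2$, which follows from (\ref{rr3}) once $\widehat\phi_n(0)=\phi_n(0)$ (valid here since $f$ is even) and $a_n=-\Phi_{n+1}(0)$ are used. Writing $E=\prod_{k=0}^{n-1}\chi_{2k}^2$ and $O=\prod_{k=0}^{n-1}\chi_{2k+1}^2$, this relation gives $O=E/\prod_{k=0}^{n-1}(1-a_{2k}^2)$, while $EO=\prod_{k=0}^{2n-1}\chi_k^2=1/D_{2n}(f)$ by (\ref{chiD}). Eliminating $O$ produces
\[
\prod_{k=0}^{n-1}\chi_{2k}^4=\frac{\prod_{k=0}^{n-1}(1-a_{2k}^2)}{D_{2n}(f)},
\]
which is precisely the ingredient that introduces $D_{2n}(f)$ into the formula.

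The boundary values come from iterating the monic recurrences (\ref{RR1}), (\ref{RR11}) at $z=\pm1$. Since $\Phi_n^*(1)=\Phi_n(1)$ and $\Phi_n^*(-1)=(-1)^n\Phi_n(-1)$, one finds $\Phi_{n+1}(1)=(1-a_n)\Phi_n(1)$ and $\Phi_{n+1}(-1)=-(1+(-1)^na_n)\Phi_n(-1)$; telescoping from $\Phi_0\equiv1$ gives $\Phi_{2n}(1)=\prod_{k=0}^{n-1}(1-a_{2k})(1-a_{2k+1})$ and $\Phi_{2n}(-1)=\prod_{k=0}^{n-1}(1+a_{2k})(1-a_{2k+1})$, whence
\[
\Phi_{2n}(1)\,\Phi_{2n}(-1)=\prod_{k=0}^{n-1}(1-a_{2k}^2)(1-a_{2k+1})^2 .
\]

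The final step is to assemble these pieces, and this is where the only genuine subtlety lies: careful index bookkeeping in the odd products. The factor $\prod_{k=0}^{n-1}(1-a_{2k-1})^2$ runs over $a_{-1},a_1,\dots,a_{2n-3}$, and its first term carries $a_{-1}=-\Phi_0(0)=-1$, so $(1-a_{-1})^2=4$; stripping this off re-indexes the odd product to $\prod_{j=0}^{n-2}(1-a_{2j+1})^2$ and, crucially, supplies exactly the extra power of $4$ that converts $4^{\,n^2-2n}$ into $4^{\,(n-1)^2}$. Comparing the surviving even and odd factors with Step~3, the missing top odd factor is $(1-a_{2n-1})^2=(1+\Phi_{2n}(0))^2$ (using $a_{2n-1}=-\Phi_{2n}(0)$), so that
\[
\prod_{k=0}^{n-1}(1-a_{2k}^2)\prod_{j=0}^{n-2}(1-a_{2j+1})^2
=\frac{\Phi_{2n}(1)\,\Phi_{2n}(-1)}{(1+\Phi_{2n}(0))^2},
\]
and substituting everything into the expression from Step~1 yields (\ref{HTdet}). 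I expect the alignment of the shifted odd products together with the $a_{-1}$ boundary factor to be the main place where a factor of $4$ or a single $(1-a_{2n-1})$ could be lost, so I would verify the computation on the case $n=1$ as a sanity check. Finally, since the telescoping in Step~1 presupposes that the relevant low-index determinants are nonzero, I would first carry out the argument under the assumption $N_0=0$ and then relax it to the stated hypothesis $D_n(f)\neq0$, $n\ge N_0$, by the same continuity-in-$\al_j,\bt_j$ argument used at the end of the proof of Lemma~\ref{Chr}.
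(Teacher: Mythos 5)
Your proof is correct, and it reaches (\ref{HTdet}) by a route that differs from the paper's in its key middle step. Both arguments start from $D_n(w)^2=\prod_{k=0}^{n-1}\varkappa_k^{-4}$ and feed in (\ref{vk}), but they diverge in how the resulting products $\prod\chi_{2k}^4$ and $\prod(1-a_{2k-1})^2$ are evaluated. The paper combines (\ref{PPhi}) at consecutive levels with the recurrences (\ref{RR1}), (\ref{RR11}) to derive the single ratio identity $(1-a_{2k-1})^2(1-a_{2k}^2)=-4\,P_{k+1}(1)P_{k+1}(-1)/\bigl(P_k(1)P_k(-1)\bigr)$, telescopes it against $\prod_{k=0}^{2n-1}\chi_k^{-2}=D_{2n}(f)$, and only at the end converts $P_n(\pm1)$ back to $\Phi_{2n}(\pm1)$ via (\ref{PPhi}). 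You instead stay entirely on the circle: the even--odd factorization $E^2=\prod(1-a_{2k}^2)/D_{2n}(f)$ handles $\prod\chi_{2k}^4$, and telescoping the Szeg\H o recurrence at $z=\pm1$ gives $\Phi_{2n}(\pm1)$ directly as products of $(1\mp a_k)$, so the interval polynomials $P_k$ never appear. Your version is arguably more transparent (each ingredient is an elementary telescoping identity, and the boundary factor $(1-a_{-1})^2=4$ cleanly accounts for the exponent $(n-1)^2$), and your $n=1$ check indeed closes ($D_1(f)^2(1-a_0^2)=D_2(f)$ since $1-a_0^2=\chi_0^2/\chi_1^2$); what the paper's route buys is the intermediate identity (\ref{aP}), which it also uses to conclude $\Phi_{2n}(\pm1)\neq0$. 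Your closing remark on reducing to $N_0=0$ and then invoking the continuity argument from Lemma \ref{Chr} matches the paper's treatment of that hypothesis.
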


\begin{proof}
Assume first $N_0=0$.
Take equation (\ref{PPhi}) with $n=k+1$ and apply the
recurrence relations (\ref{RR1},\ref{RR11}) with $n=2k+1$ to $\Phi_{2k+2}(z)$
and $\Phi^*_{2k+2}(z)$, respectively. We then obtain
\[
P_{k+1}(x)=(2z)^{-k-1}(z\Phi_{2k+1}(z)+\Phi^*_{2k+1}(z)).
\]
Now apply again the relations (\ref{RR1},\ref{RR11}) with $n=2k$ to
$\Phi_{2k+1}(z)$ and $\Phi^*_{2k+1}(z)$ here, respectively.
The result can be written in the form
\[
\Phi^*_{2k}(z)={(2z)^{k+1}\over 1-z a_{2k}}P_{k+1}(x)-z{z-a_{2k}\over
1-z a_{2k}}\Phi_{2k}(z),
\]
where we assume that $z\neq 0$ and $1-z a_{2k}\neq 0$.
On the other hand, from (\ref{PPhi}) with $n=k$
\[
\Phi^*_{2k}(z)=(2z)^k(1-a_{2k-1})P_k(x)-\Phi_{2k}(z)
\]
Equating the r.h.s. of the last two equations, we obtain
\be
(z^2-1)\Phi_{2k}(z)=(2z)^{k+1}P_{k+1}(x)-(2z)^k(1-a_{2k-1})(1-za_{2k})P_k(x).
\ee
Setting here $z=1$ (recall from the proof of Lemma \ref{Pszego} that
$1\pm a_n\neq 0$, $n=0,1,\dots$), we obtain
\[
(1-a_{2k-1})(1-a_{2k})=2{P_{k+1}(1)\over P_k(1)}.
\]
Note that it is a general property of orthogonal polynomials that $P_k(x)$ and $P_{k+1}(x)$
cannot have a zero in common.
Similarly, setting $z=-1$, we have
\[
(1-a_{2k-1})(1+a_{2k})=-2{P_{k+1}(-1)\over P_k(-1)}.
\]
The product of these two equations yields
\[
(1-a_{2k-1})^2(1-a_{2k}^2)=-4{P_{k+1}(1)\over P_k(1)}
{P_{k+1}(-1)\over P_k(-1)}.
\]
By the relation (\ref{rr3}), we can substitute here
\[
1-a_{2k}^2=\left({\chi_{2k}\over\chi_{2k+1}}\right)^2,
\]
which gives
\be\la{aP}
(1-a_{2k-1})^2=-4 \left({\chi_{2k+1}\over\chi_{2k}}\right)^2{P_{k+1}(1)P_{k+1}(-1)
\over P_k(1) P_k(-1)}.
\ee
This equation together with (\ref{vk}) and the well-known expression for
$D_n(w(x))$ in terms of the leading coefficients $\varkappa_k^{-2}$ implies
\be
D_n(w(x))^2=\prod_{k=0}^{n-1}\varkappa_k^{-4}=
{\pi^{2n}\over 4^{n(n-1)}}{(-1)^n\over P_n(1)P_n(-1)}
\prod_{k=0}^{2n-1}\chi_k^{-2}=
{\pi^{2n}\over 4^{n(n-1)}}{(-1)^n\over P_n(1)P_n(-1)}D_{2n}(f(z)).
\ee
Now using (\ref{PPhi}), we obtain
\[
P_n(1)P_n(-1)={(-1)^n\over 4^{n-1} (1-a_{2n-1})^2}\Phi_{2n}(1)\Phi_{2n}(-1),
\]
and thus finish the proof for $N_0=0$. (Note that (\ref{aP}) implies that $\Phi_{2n}(\pm 1)\neq 0$.)
The extension to an arbitrary $N_0>0$ is carried out as in the proof of Lemma \ref{Chr}.
\end{proof}

We will also need a connection between Hankel and Toeplitz+Hankel determinants.
We borrow the idea of the next statement from \ci{Weyl, J2, BR}.

\begin{lemma}\la{HTH}[Connection between Hankel and Toeplitz+Hankel determinants]
 Let $f_j$ be the Fourier coefficient $f_j={1\over 2\pi}\int_0^{2\pi}f(e^{i\theta})e^{-i j\theta}d\theta$. Let $f(e^{i\theta})=f(e^{-i\theta})$.
Then, for $n=1,2,\dots$,
\be\la{TH+}
\det (f_{j-k}+f_{j+k})_{j,k=0}^{n-1}={2^{n^2-2n+2}\over\pi^n} D_n(v(x)),
\ee
where $D_n(v(x))$ is the Hankel determinant with symbol
$v(x)=f(e^{i\theta(x)})/\sqrt{1-x^2}$,
$x=\cos\theta$ on $[-1,1]$.
Furthermore, again in terms of Hankel determinants with symbols on $x\in[-1,1]$,
\begin{eqnarray}
\det (f_{j-k}-f_{j+k+2})_{j,k=0}^{n-1}={2^{n^2}\over\pi^n}
D_n(f(e^{i\theta(x)})\sqrt{1-x^2}),\la{TH-}\\
\det (f_{j-k}+f_{j+k+1})_{j,k=0}^{n-1}={2^{n^2-n}\over\pi^n}
D_n(f(e^{i\theta(x)})\sqrt{1+x\over 1-x}),\la{TH+1}\\
\det (f_{j-k}-f_{j+k+1})_{j,k=0}^{n-1}={2^{n^2-n}\over\pi^n}
D_n(f(e^{i\theta(x)})\sqrt{1-x\over 1+x}).\la{TH-1}
\end{eqnarray}
\end{lemma}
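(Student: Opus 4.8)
The plan is to reduce each of the four identities to the Hankel determinant $D_n(w)$ by converting the Fourier-coefficient combinations into integrals of products of a suitable polynomial basis in $x=\cos\theta$, and then peeling off a single lower-triangular change-of-basis matrix. First I would use the evenness $f(e^{i\theta})=f(e^{-i\theta})$ to write $f_j=f_{-j}=\frac{1}{\pi}\int_0^\pi f(e^{i\theta})\cos(j\theta)\,d\theta$, so every matrix entry becomes an integral over $[0,\pi]$ of $f$ against a trigonometric kernel in $j$ and $k$; reducing to $[0,\pi]$ is what makes the substitution $x=\cos\theta$, $d\theta=-dx/\sqrt{1-x^2}$ (with $\sin\theta\ge0$) legitimate.

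The next step is to collapse each combination $f_{j-k}\pm f_{j+k+c}$ into a single product of trigonometric factors via the sum-to-product formulas. For (\ref{TH+}) this gives $\cos((j-k)\theta)+\cos((j+k)\theta)=2\cos(j\theta)\cos(k\theta)$; for (\ref{TH-}), $2\sin((j+1)\theta)\sin((k+1)\theta)$; for (\ref{TH+1}), $2\cos((j+\tfrac12)\theta)\cos((k+\tfrac12)\theta)$; and for (\ref{TH-1}), $2\sin((j+\tfrac12)\theta)\sin((k+\tfrac12)\theta)$. I would then recognize each trigonometric factor as a polynomial in $x$ times a fixed $j,k$-independent weight: $\cos(j\theta)=T_j(x)$ and $\sin((j+1)\theta)=\sin\theta\,U_j(x)$ are the Chebyshev polynomials, while $\cos((j+\tfrac12)\theta)/\cos\tfrac\theta2=:V_j(x)$ and $\sin((j+\tfrac12)\theta)/\sin\tfrac\theta2=:W_j(x)$ are polynomials in $x$, as one checks from $\cos((j+\tfrac12)\theta)=\cos(j\theta)\cos\tfrac\theta2-\sin(j\theta)\sin\tfrac\theta2$ together with $\sin(j\theta)=2\sin\tfrac\theta2\cos\tfrac\theta2\,U_{j-1}(x)$, yielding $V_j=T_j-(1-x)U_{j-1}$ and $W_j=T_j+(1+x)U_{j-1}$, each with leading coefficient $2^j$. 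The leftover even powers $\sin^2\theta=1-x^2$, $\cos^2\tfrac\theta2=\tfrac{1+x}2$, $\sin^2\tfrac\theta2=\tfrac{1-x}2$, divided by the Jacobian $\sqrt{1-x^2}$, produce exactly the symbols $1/\sqrt{1-x^2}$, $\sqrt{1-x^2}$, $\sqrt{(1+x)/(1-x)}$, $\sqrt{(1-x)/(1+x)}$ of the four cases, with all numerical constants collected into a scalar prefactor $\kappa$ (equal to $2/\pi$ for (\ref{TH+}),(\ref{TH-}), and $1/\pi$ for (\ref{TH+1}),(\ref{TH-1}), since there the $2$ from the sum-to-product cancels the $\tfrac12$ from the half-angle formula).

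At this point each matrix has the form $M_{jk}=\kappa\int_{-1}^1 w(x)\,p_j(x)p_k(x)\,dx$ with $(p_j)$ one of $T_j,U_j,V_j,W_j$. Writing $p_j(x)=\sum_i C_{ji}x^i$ with $C$ lower triangular gives the factorization $M=\kappa\,C\,H\,C^{T}$, where $H_{il}=\int_{-1}^1 x^{i+l}w(x)\,dx$ is the Hankel matrix of $w$, so that $\det M=\kappa^{n}(\det C)^2 D_n(w)$. Since $\det C=\prod_{j=0}^{n-1}(\text{leading coefficient of }p_j)$, I would finish with the bookkeeping $\det C=2^{(n-1)(n-2)/2}$ in the $T_j$ case (here $T_0$ has leading coefficient $1$) and $\det C=2^{n(n-1)/2}$ in the $U_j,V_j,W_j$ cases; combining $(\det C)^2$ with $\kappa^n$ turns the exponent of $2$ into $n+(n-1)(n-2)=n^2-2n+2$ for (\ref{TH+}), $n+n(n-1)=n^2$ for (\ref{TH-}), and $0+n(n-1)=n^2-n$ for (\ref{TH+1}),(\ref{TH-1}), as claimed.

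The part that needs genuine care is the two half-integer cases: $V_j$ and $W_j$ are not one of the standard Chebyshev families, so I would verify cleanly that $\cos((j+\tfrac12)\theta)/\cos\tfrac\theta2$ and $\sin((j+\tfrac12)\theta)/\sin\tfrac\theta2$ are honest polynomials in $x=\cos\theta$ (with no residual $\sqrt{1\pm x}$) and that their leading coefficient is exactly $2^j$, since an error there would corrupt the power of $2$ and hence the whole prefactor. Everything else---the trigonometric collapses, the change of variable, and the $M=\kappa\,C\,H\,C^{T}$ determinant identity---is then routine linear algebra once the correct polynomial system has been pinned down for each case.
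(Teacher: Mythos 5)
Your proposal is correct and follows essentially the same route as the paper: convert each entry into $\frac{1}{\pi}\int_0^{2\pi} f$ times a product of trigonometric factors, recognize those factors as polynomials in $x=\cos\theta$ (the quantities $\cos k\theta$, $\sin(k+1)\theta/\sin\theta$, $\cos(k+\tfrac12)\theta/\cos\tfrac\theta2$, $\sin(k+\tfrac12)\theta/\sin\tfrac\theta2$) whose only relevant datum is the leading coefficient $2^{k-1}$ or $2^k$, and extract the resulting triangular change-of-basis determinant before substituting $x=\cos\theta$. Your $M=\kappa\,C H C^{T}$ factorization is just an explicit packaging of the paper's row-and-column reduction, and your constant bookkeeping (including the leading coefficients of $V_j$ and $W_j$ and the powers of $2$) checks out.
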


\begin{proof}
Since $f(e^{i\theta})=f(e^{-i\theta})$, note that for $j,k=0,1,\dots$,
\be
{1\over\pi}\int_0^{2\pi}f(e^{i\theta})\cos j\th \cos k\th d\th=
{1\over 2\pi}\int_0^{2\pi}f(e^{i\theta})(e^{-i(j+k)\th}+e^{-i(j-k)\th})d\th=f_{j-k}+f_{j+k}.
\ee
Therefore, using the standard expansion (where only the first coefficient is
needed to be known explicitly) in non-negative powers of the cosine,
\be
\cos k\th =2^{k-1}\cos^k\th+c_{k-2}\cos^{k-2}\th+c_{k-4}\cos^{k-4}\th+\cdots,
\ee
we obtain
\begin{multline}
\det (f_{j-k}+f_{j+k})_{j,k=0}^{n-1}=
\det \left({1\over\pi}\int_0^{2\pi}f(e^{i\theta})\cos j\th\cos k\th d\th
\right)_{j,k=0}^{n-1}\\
=2^{1+2+\cdots+n-2}
\det\left({1\over\pi}\int_0^{2\pi}f(e^{i\theta})\cos j\th\cos^k\th d\th
\right)_{j,k=0}^{n-1}\\
=2^{(n-1)(n-2)}
\det\left({1\over\pi}\int_0^{2\pi}f(e^{i\theta})\cos^j\th\cos^k\th d\th
\right)_{j,k=0}^{n-1}\\
=2^{(n-1)(n-2)}
\left({2\over\pi}\right)^n
\det\left(\int_0^\pi f(e^{i\theta})\cos^j\th\cos^k\th d\th
\right)_{j,k=0}^{n-1}.
\end{multline}
Changing the variable $x=\cos\th$, $d\th=-dx/\sqrt{1-x^2}$, we immediately obtain
\be
\det (f_{j-k}+f_{j+k})_{j,k=0}^{n-1}=
{2^{n^2-2n+2}\over\pi^n}
\det\left(\int_{-1}^1 v(x) x^{j+k}dx\right),\qquad v(x)=
{f(e^{i\theta(x)})\over\sqrt{1-x^2}},
\ee
which is (\ref{TH+}).

Similarly, using the observations
\begin{eqnarray}
{1\over\pi}\int_0^{2\pi}f(e^{i\theta})\sin (j+1)\th \sin (k+1)\th d\th=
f_{j-k}-f_{j+k+2},\\
{1\over\pi}\int_0^{2\pi}f(e^{i\theta})\cos (j+1/2)\th \cos (k+1/2)\th d\th=
f_{j-k}+f_{j+k+1},\\
{1\over\pi}\int_0^{2\pi}f(e^{i\theta})\sin (j+1/2)\th \sin (k+1/2)\th d\th=
f_{j-k}-f_{j+k+1},
\end{eqnarray}
and the expansions in non-negative powers of the cosine of the quantities
\[
{\sin (k+1)\th \over \sin\th},\qquad
{\cos (k+1/2)\th \over \cos{\th\over 2}},\qquad
{\sin (k+1/2)\th \over \sin{\th\over 2}},
\]
we obtain (\ref{TH-}), (\ref{TH+1}), and (\ref{TH-1}).
\end{proof}

Finally, we list some properties of Barnes' $G$-function (see \ci{Barnes,WW})
we need below. The $G$-function is an entire function defined, e.g.,
by the product:
\be
G(z+1)=(2\pi)^{z/2}e^{-(z+1)z/2-\ga_E z^2/2}
\prod_{k=1}^\infty \left(1+{z\over k}\right)^k e^{-z+z^2/(2k)},\qquad z\in\bbc
\ee
where $\ga_E$ is Euler's constant.
$G(z)$ satisfies the recurrence relation:
\be
G(z+1)=\Ga(z)G(z),\qquad G(1)=1,
\ee
where $\Ga(z)$ is Euler's $G$-function.
The following representation is useful
\be
\int_0^z\ln \Gamma(x+1)dx=
{z\over 2}\ln 2\pi -{z(z+1)\over2}+z\ln\Gamma(z+1)-\ln G(z+1).
\ee
There holds the identity:
\be\la{G12}
2\ln G(1/2)=
(1/12)\ln2-\ln\sqrt\pi+3\zeta'(-1),
\ee
where $\zeta'(x)$ is the derivative of Riemann's $\zeta$-function.
We will also need a doubling formula given by
\be\la{Gdouble}
G(2z)\pi^z G(1/2)^2
=G(z)^2 G(z+1/2)^2\Ga(z) 2^{(2z-1)(z-1)}.
\ee

\section{Riemann-Hilbert problem}\la{RHsection}
In this section we formulate a Riemann-Hilbert problem (RHP)
for the polynomials
$\phi_k(z)$, $\widehat\phi_k(z)$. We use this RHP in section 5 to find
asymptotics of the polynomials.

Let the weight $f(z)$ be given on the unit circle
(which, oriented in the positive direction, we denote $C$)
by (\ref{fFH}).
Suppose that the system of orthonormal polynomials satisfying (\ref{or0}) exists.
Consider the following $2\times 2$ matrix valued function $Y^{(k)}(z)\equiv Y(z)$:
\begin{equation} \label{RHM}
    Y^{(k)}(z) =
    \begin{pmatrix}
\chi_k^{-1}\phi_k(z) &
\chi_k^{-1}\int_{C}{\phi_k(\xi)\over \xi-z}
{f(\xi)d\xi \over 2\pi i \xi^k} \cr
-\chi_{k-1}z^{k-1}\widehat\phi_{k-1}(z^{-1}) &
-\chi_{k-1}\int_{C}{\widehat\phi_{k-1}(\xi^{-1})\over \xi-z}
{f(\xi)d\xi \over 2\pi i \xi}
    \end{pmatrix}.
\end{equation}

It is easy to verify that
$Y(z)$ solves the following Riemann-Hilbert problem:
\begin{enumerate}
    \item[(a)]
        $Y(z)$ is  analytic for $z\in\bbc \setminus C$.
    \item[(b)]
Let $z\in C\setminus\cup_{j=0}^m z_j$.
$Y$ has continuous boundary values
$Y_{+}(z)$ as $z$ approaches the unit circle from
the inside, and $Y_{-}(z)$, from the outside,
related by the jump condition
\begin{equation}\label{RHPYb}
            Y_+(z) = Y_-(z)
            \begin{pmatrix}
                1 & z^{-k}f(z) \cr
                0 & 1
             \end{pmatrix},
            \qquad\mbox{$z\in C\setminus\cup_{j=0}^m z_j$.}
        \end{equation}
    \item[(c)]
        $Y(z)$ has the following asymptotic behavior at infinity:
        \begin{equation} \label{RHPYc}
            Y(z) = \left(I+ O \left( \frac{1}{z} \right)\right)
            \begin{pmatrix}
                z^{k} & 0 \cr
                0 & z^{-k}\end{pmatrix}, \qquad \mbox{as $z\to\infty$.}
        \end{equation}
\item [(d)]
As $z\to z_j$, $j=0,1,\dots,m$, $z\in\bbc\setminus C$,
\be
Y(z)=
\begin{pmatrix} O(1) & O(1)+O(|z- z_j|^{2\al_j})\cr
O(1) & O(1)+O(|z- z_j|^{2\al_j})
\end{pmatrix},
\qquad \mbox{if $\al_j\neq 0$},
\ee
and
\be
Y(z)=
\begin{pmatrix} O(1) & O(\ln|z- z_j|)\cr
O(1) & O(\ln|z-z_j|)
\end{pmatrix},
\qquad \mbox{if $\al_j=0$, $\bt_j\neq 0$}.
\ee
\end{enumerate}
(Here and below $O(a)$ stands for $O(|a|)$. If $\al_0=\bt_0=0$ then $Y(z)$ is bounded at $z=1$.)

A general fact that orthogonal polynomials can be so represented as a solution
of a Riemann-Hilbert problem was noticed in \cite{FIK} (for polynomials
on the line) and extended for polynomials on the circle in \cite{BDJ}.
This fact is important because it turns out that the RHP can be efficiently analyzed
for large $k$ by a steepest-descent-type method found in \ci{DZ} and developed
further in many subsequent works. Thus, we first
find the solution to the problem
(a)--(d) for large $k$ (applying this method)
and then interpret it as the asymptotics of
the orthogonal polynomials by (\ref{RHM}).

The solution to the RHP (a)--(d) is unique. Note first
that $\det Y(z)=1$. Indeed, from the conditions on $Y(z)$,
$\det Y(z)$ is analytic across the unit circle, has all singularities
removable, and tends to $1$ as $z\to\infty$.
It is then identically $1$ by Liouville's theorem. Now
if there is another solution $\wt Y(z)$, we easily obtain by
Liouville's theorem that $Y(z) \wt Y(z)^{-1}\equiv 1$.


\section{Asymptotic analysis of the Riemann-Hilbert problem}\la{RHa}

In this section we construct an asymptotic solution to
the Riemann-Hilbert problem (a) -- (d) of Section \ref{RHsection}
for large $k=n$ by the steepest descent method.
All the steps of the analysis are standard apart from construction of the
local parametrix near the points $z_j$. We always assume that $f(z)$ is given
by (\ref{fFH}). In all Section \ref{RHa} we also assume for simplicity that $z_0=1$
is a singularity. However, the results trivially extend to the case $\al_0=\bt_0=0$.
In Section \ref{RHa} we further assume that $V(z)$ is analytic in a
neighborhood of the unit circle.

The first step is the following transformation, which normalizes the problem
at infinity:
\be\la{TY}
T(z)=Y(z)
\begin{cases}
z^{-n\si_3},& |z|>1\cr
I,& |z|<1,
\end{cases}
\ee
where $\si_3=\begin{pmatrix} 1 & 0 \cr 0 & -1 \end{pmatrix}$.
From the RHP for $Y(z)$, we obtain the following
problem for $T(z)$:

\begin{enumerate}
    \item[(a)]
        $T(z)$ is  analytic for $z\in\bbc \setminus C$.
    \item[(b)]
The boundary values of $T(z)$ are related by the jump condition
\begin{equation}
 T_+(z) = T_-(z)
            \begin{pmatrix}
                z^n & f(z) \cr
                0 & z^{-n}\end{pmatrix},\qquad z\in C\setminus\cup_{j=0}^m z_j,
\end{equation}
    \item[(c)]
$T(z)=I+O(1/z)$ as $z\to\infty$,
\end{enumerate}
and the condition (d) remains unchanged.

Now split
the contour as shown in
Figure 1.
\begin{figure}
\centerline{\psfig{file=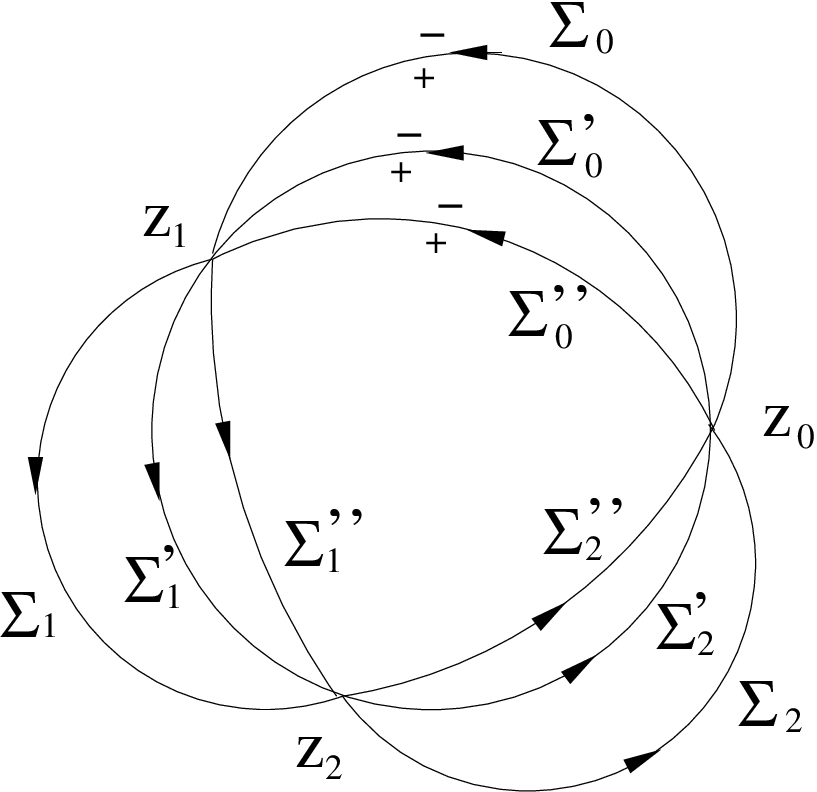,width=3.0in,angle=0}}
\vspace{0cm}
\caption{
Contour for the $S$-Riemann-Hilbert problem ($m=2$).}
\label{fig1}
\end{figure}
Define a new transformation as follows:
\be\la{defS}
S(z)=
\begin{cases}T(z),& \mbox{for $z$ outside the lenses},\cr
T(z)\begin{pmatrix}1 & 0\cr f(z)^{-1}z^{-n}& 1\end{pmatrix}, &
\mbox{for $|z|>1$ and inside the lenses},\cr
 T(z)\begin{pmatrix}1 & 0\cr -f(z)^{-1}z^n& 1\end{pmatrix},&
\mbox{for $|z|<1$ and inside the lenses}.
\end{cases}
\ee
Here $f(z)$ is the analytic continuation of $f(z)$ off $C$ into the inside
of the lenses as discussed in Section \ref{RHparam} below (see (\ref{abszzj}), (\ref{abszzj2})).

Then the Riemann-Hilbert problem for $S(z)$ is the following:
\begin{enumerate}
    \item[(a)]
        $S(z)$ is  analytic for $z\in\bbc \setminus\Sigma$, where
$\Si=\cup_{j=0}^m(\Si_j\cup\Si'_j\cup\Si''_j)$.
    \item[(b)]
The boundary values of $S(z)$ are related by the jump condition
\[
   S_+(z) = S_-(z)
            \begin{pmatrix}
                  1 & 0\cr
                  f(z)^{-1}z^{\mp n } & 1\end{pmatrix},
            \qquad\mbox{$z\in\cup_{j=0}^{m}(\Si_j\cup\Si''_j)$},
\]
where the minus sign in the exponent is on
$\Si_j$, and plus, on $\Si''_j$,
\[
            S_+(z) = S_-(z)
              \begin{pmatrix}
                  0 & f(z)\cr
                  -f(z)^{-1} & 0\end{pmatrix},
            \qquad\mbox{$z\in\cup_{j=0}^{m}\Si'_j$.}
\]
    \item[(c)]
$S(z)=I+O(1/z)$ as $z\to\infty$,
\item [(d)]
As $z\to z_j$, $j=0,\dots,m$, $z\in\bbc\setminus C$ outside the lenses,
\be\la{Sd1}
S(z)=
\begin{pmatrix} O(1) & O(1)+O(|z- z_j|^{2\al_j})\cr
O(1) & O(1)+O(|z- z_j|^{2\al_j})
\end{pmatrix}
\ee
if $\al_j\neq 0$, and
\be\la{Sd2}
S(z)=
\begin{pmatrix} O(1) & O(\ln|z- z_j|)\cr
O(1) & O(\ln|z-z_j|)
\end{pmatrix}
\ee
if $\al_j=0$, $\bt_j\neq 0$.
The behavior of $S(z)$ for $z\to z_j$ in other sectors is obtained from these expressions by application of the appropriate jump conditions.
\end{enumerate}

Let us encircle each of the points
$z_{j}$ by a sufficiently small disc,
\be\la{U}
U_{z_{j}} = \left\{z:|z-z_{j}| < \ep\right\},
\ee
We see that, outside the neighborhoods $U_{z_j}$,
the jump matrix on $\Sigma_j$, $\Sigma''_j$ $j=0,\dots,m$ is
uniformly exponentially close to
the identity.
We will now construct the parametrices in
$\bbc\setminus(\cup_{j=0}^m U_{z_j})$ and $U_{z_j}$.
We match them on the boundaries $\partial U_{z_j}$,
which yields the desired asymptotics.

\subsection{Parametrix outside the points $z_j$}

We expect the following problem for the parametrix $N$ in
$\bbc\setminus\cup_{j=0}^m U_{z_j}$:

\begin{enumerate}
    \item[(a)]
        $N(z)$ is  analytic for $z\in\bbc\setminus C$,
    \item[(b)]
with the jump on $C$
\[
N_{+}(z) = N_{-}(z)
            \begin{pmatrix}0& f(z)\cr
              -f(z)^{-1}&0\end{pmatrix},
\qquad\mbox{$z \in C\setminus\cup_{j=0}^m z_j$},
\]
\item[(c)]
and the following behavior at infinity
\[
N(z) = I+ O \left( \frac{1}{z} \right),
     \qquad \mbox{as $z\to\infty$.}
\]
\end{enumerate}

One can easily check directly that the solution to this RHP
is given by the formula
\be\la{Ndef}
N(z)=
\begin{cases}
\mathcal{D}(z)^{\si_3},& |z|>1\cr
\mathcal{D}(z)^{\si_3}
\bm
0&1\cr -1&0
\em,
& |z|<1
\end{cases},
\ee
where the Szeg\H o function
\be
\mathcal{D}(z)=\exp {1\over 2\pi i}\int_C
{\ln f(s)\over s-z} ds,
\ee
is analytic away from the unit circle with boundary values
satisfying $\mathcal{D}_+(z)=\mathcal{D}_-(z)f(z)$, $z\in C\setminus\cup_{j=0}^m z_j$.

In what follows, we will need a more explicit formula for $\mathcal{D}(z)$.
Calculation of the integral (with the help of (\ref{abszzj}) below) gives:
\be\la{Dl1}
\mathcal{D}(z)=\exp\left[ {1\over 2\pi i}\int_C {V(s)\over s-z} ds\right]
\prod_{k=0}^m\left({z-z_k\over z_k e^{i\pi}}\right)^{\al_k+\bt_k}
=e^{V_0}b_+(z)\prod_{k=0}^m\left({z-z_k\over z_k e^{i\pi}}\right)^{\al_k+\bt_k}
,\qquad
|z|<1.
\ee
and
\be\la{Dg1}
\mathcal{D}(z)=\exp\left[ {1\over 2\pi i}\int_C {V(s)\over s-z} ds\right]
\prod_{k=0}^m\left({z-z_k\over z}\right)^{-\al_k+\bt_k}
=b_-(z)^{-1}\prod_{k=0}^m\left({z-z_k\over z}\right)^{-\al_k+\bt_k}
,\qquad
|z|>1,
\ee
where $V_0$, $b_\pm(z)$ are defined in (\ref{WienH}).
Note that the branch of $(z-z_k)^{\pm\al_k+\bt_k}$ in (\ref{Dl1}), (\ref{Dg1})
is taken as discussed
following equation (\ref{abszzj}) below. In (\ref{Dg1}) for any $k$, the cut
of the root $z^{-\al_k+\bt_k}$ is the line $\th=\th_k$ from
$z=0$ to infinity, and $\th_k<\arg z<2\pi+\th_k$.


\subsection{Parametrix at $z_j$. }\la{RHparam}

Let us now construct the parametrix $P_{z_j}(z)$ in $U_{z_j}$.
The construction is the same for all $j=0,1,\dots$.
We look for an analytic matrix-valued function in a
neighborhood of $U_{z_j}$
which satisfies the same jump conditions as $S(z)$ on
$\Sigma\cap U_{z_j}$, the same conditions (\ref{Sd1},\ref{Sd2})
as $z\to z_j$, and, instead of a condition at infinity,
satisfies the matching condition
\be\la{match}
P_{z_j}(z)N^{-1}(z)=I+o(1)
\ee
uniformly on the boundary $\partial U_{z_j}$ as $n\to\infty$.

First, set
\be\la{zeta}
\zeta=n\ln{z\over z_j},
\ee
where $\ln x>0$ for $x>1$, and has a cut on the negative half of the real axis.
Under this transformation
the neighborhood $U_{z_j}$ is mapped into a neighborhood of zero in the
$\zeta$-plane. Note that $\zeta(z)$ is analytic, one-to-one,
and it takes an arc of the unit circle to an interval of the imaginary axis.
Let us now choose the exact form of the cuts $\Sigma$ in $U_{z_j}$ so that
their images under the mapping $\zeta(z)$ are straight lines (Figure 2).
We add one more jump contour to $\Sigma$ in $U_{z_j}$
which is the pre-image of the real line $\Ga_3$ and $\Ga_7$
in the $\zeta$-plane.
This will be needed below because of the non-analyticity of the function
$|z-z_j|^{\al_j}$.
Note that we can construct two different analytic continuations
of this function off the unit circle to the pre-images of the upper
and lower half $\ze$-plane, respectively.
Namely, write
for $z$ on the unit circle,
\be\la{abszzj}
h_{\al_j}(z)=|z-z_j|^{\al_j}=(z-z_j)^{\al_j/2}(z^{-1}-z_j^{-1})^{\al_j/2}=
\frac{(z-z_j)^{\al_j}}{(z z_j e^{i\ell_j})^{\al_j/2}},\qquad z=e^{i\th},
\ee
where $\ell_j$ is found from the condition that the argument of the above function
is zero on the unit circle.
Let us fix the cut of $(z-z_j)^{\al_j}$ going along the line $\th=\th_j$ from $z_j$
to infinity. Fix the branch by the condition that on the line going from $z_j$ to
the right parallel to the real axis, $\arg (z-z_j)=2\pi $.
For $z^{\al_j/2}$ in the denominator, $0<\arg z<2\pi$ (the same convention for roots of $z$
is adopted in (\ref{Fj},\ref{F0}) below).
Then, a simple consideration of triangles shows that
\be\la{abszzj2}
\ell_j=\begin{cases}
3\pi,& 0<\th<\th_j\cr
\pi, & \th_j<\th<2\pi
\end{cases}.
\ee
Thus (\ref{abszzj}) is continued analytically to neighborhoods of the arcs $0<\th<\th_j$, and
$\th_j<\th< 2\pi$. In $U_{z_j}$, we extend these neighborhoods to the pre-images
of the lower and upper half $\ze$-plane (intersected with $\zeta(U_{z_j})$),
respectively. The cut of $h_{\al_j}$ is along
the contours $\Ga_3$ and $\Ga_7$ in the $\ze$-plane.

For $z\to z_j$, $\zeta= n(z-z_j)/z_j+O((z-z_j)^2)$.
We have $0<\arg\zeta<2\pi$, which follows from the choice of $\arg(z-z_j)$ in (\ref{abszzj}).

Denote by Roman numerals the sectors between the cuts in Figure 2.
We now introduce the following auxiliary function. First, for $j\neq 0$,
\begin{multline}\la{Fj}
F_j(z)
=e^{{V(z)\over 2}} \prod_{k=0}^m
\left(\frac{z}{z_k}\right)^{\bt_k/2}
\prod_{k\neq j} h_{\al_k}(z) g_{\bt_k}(z)^{1/2}\\
\times h_{\al_j}(z)
\begin{cases}
e^{-i\pi\al_j},& \zeta\in I,II,V,VI\cr
e^{i\pi\al_j},& \zeta\in III,IV,VII,VIII
\end{cases},
\quad z\in U_{z_j},\quad j\neq 0.
\end{multline}
The functions $g_{\bt_k}(z)$ are defined in (\ref{g}).
The case of $U_{z_0}$ is slightly different because of the branch cut of
$z^{\bt_k}$ and $z^{\al_k}$ going along
the positive real half-line. Define a step function
\be
\widehat g_{\bt_0}(z)=
\begin{cases}
e^{-i\pi\bt_0},& \arg z>0\cr
e^{i\pi\bt_0},& \arg z<2\pi
\end{cases},\qquad z\in U_{z_0},
\ee
and define
\begin{multline}\la{F0}
F_0(z)
=e^{{V(z)\over 2}}  \prod_{k=0}^m
\left(\frac{z}{z_k}\right)^{\bt_k/2}
\prod_{k\neq 0} h_{\al_k}(z) g_{\bt_k}(z)^{1/2}\\
\times h_{\al_0}(z)
\begin{cases}
e^{-i\pi\al_0},& \zeta\in I,II\cr
e^{i\pi(\al_0-\bt_0)},& \zeta\in III,IV\cr
e^{-i\pi(\al_0+\bt_0)},& \zeta\in V,VI\cr
e^{i\pi\al_0},& \zeta\in VII,VIII
\end{cases},
\quad z\in U_{z_0}.
\end{multline}

It is easy to verify that $F_j(z)$, $j=0,1,\dots$
is analytic in the intersection of
each quarter $\zeta$-plane with $\zeta(U_{z_j})$
and has the following jumps:
\begin{eqnarray}
 F_{j,+}(z)&=&F_{j,-}(z)e^{-2\pi i\al_j}\qquad \ze\in \Ga_1;\\
 F_{j,+}(z)&=&F_{j,-}(z)e^{2\pi i\al_j}\qquad \ze\in \Ga_5;\\
 F_{j,+}(z)&=&F_{j,-}(z)e^{\pi i\al_j}\qquad \ze\in \Ga_3\cup\Ga_7.
\end{eqnarray}

Comparing (\ref{fFH}) and (\ref{Fj}), and using the analytic continuation
(see (\ref{abszzj})) for $f(z)$ off the arcs between the singularities,
we obtain the following relations between $f(z)$ and $F_j(z)$:
\begin{eqnarray}
 F_j(z)^2 &=& f(z)e^{-2\pi i\al_j}g_{\bt_j}^{-1}(z)\qquad \ze\in I,II,V,VI;\\
 F_j(z)^2 &=& f(z)e^{2\pi i\al_j}g_{\bt_j}^{-1}(z)\qquad \ze\in III,IV,VII,VIII.
\end{eqnarray}
for $j\neq 0$. If $j=0$ the same relations hold with the
functions $g_{\bt_0}^{-1}(z)$ replaced by $\widehat g_{\bt_0}^{-1}(z)$.

We look for $P_{z_j}(z)$ in the form
\be\la{Plb0}
P_{z_j}(z)=E(z)P^{(1)}(z)F_j(z)^{-\si_3}z^{\pm n\si_3/2},
\ee
where plus sign is taken for $|z|<1$
(this corresponds to $\ze\in I,II,III,IV$), and minus,
for $|z|>1$ ($\ze\in V,VI,VII,VIII$).
The matrix $E(z)$
is analytic and invertible in the neighborhood of $U_{z_j}$,
and therefore does not affect the jump and analyticity conditions.
It is chosen so that the matching condition is satisfied.

It is easy to verify (recall that $P_{z_j}(z)$ has the same jumps as
$S(z)$) that $P^{(1)}(z)$ satisfies jump
conditions with {\it constant} jump matrices. Set
\be\la{P1}
P^{(1)}(z)=\Psi_j(\zeta).
\ee
Then $\Psi_j(\zeta)$ satisfies a RHP on the contour given in Figure 2:

\begin{figure}
\centerline{\psfig{file=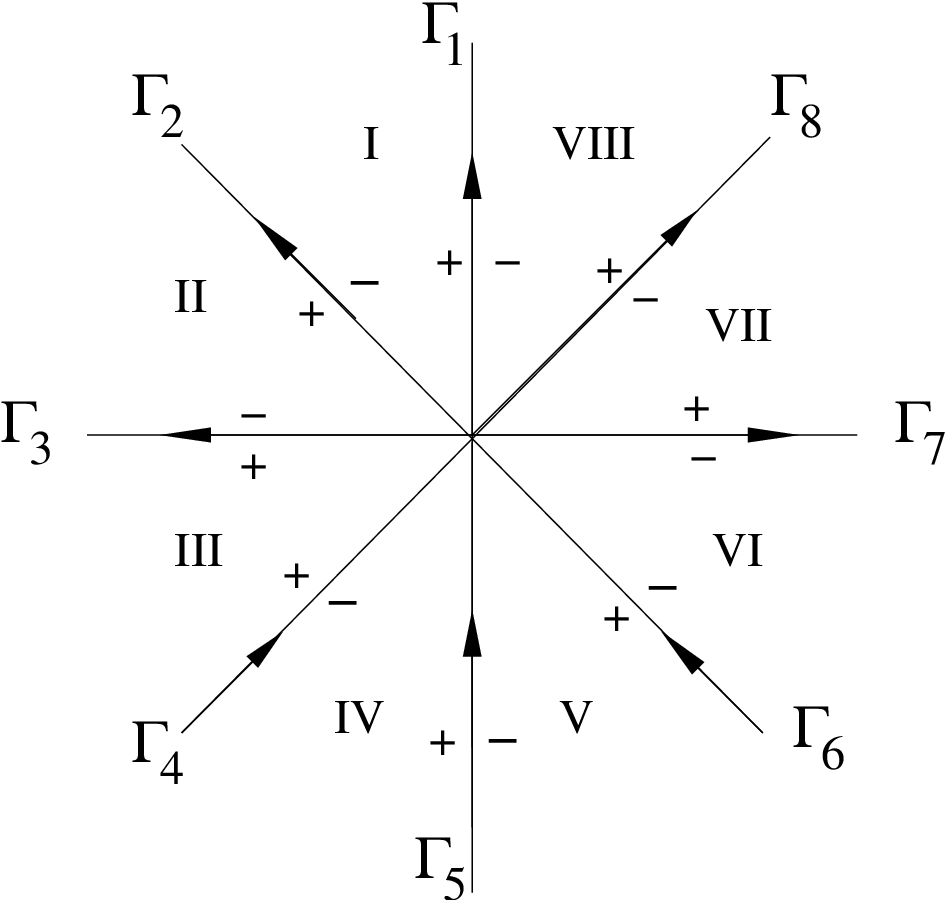,width=3.0in,angle=0}}
\vspace{0cm}
\caption{
The auxiliary contour for the parametrix at $z_j$.}
\label{fig2}
\end{figure}

\begin{enumerate}
\item[(a)]
    $\Psi_{j}$ is analytic for $\ze\in\bbc\setminus\cup_{j=1}^8\Ga_j$.
\item[(b)]
    $\Psi_{j}$ satisfies the following jump conditions:
    \begin{eqnarray}
       \Psi_{j,+}(\zeta)
       &=&
            \Psi_{j,-}(\zeta)
            \begin{pmatrix}
                0 & e^{-i\pi\bt_j} \cr
                -e^{i\pi\bt_j} & 0
            \end{pmatrix},
            \qquad \mbox{for $\zeta \in \Ga_1$,}\label{JumpPsi1}
           \\
        \Psi_{j,+}(\zeta)
        &=&
            \Psi_{j,-}(\zeta)
            \begin{pmatrix}
                0 & e^{i\pi\bt_j} \cr
                -e^{-i\pi\bt_j} & 0
            \end{pmatrix},
            \qquad \mbox{for $\zeta \in \Ga_5$,}\\
        \Psi_{j,+}(\zeta)
        &=&
            \Psi_{j,-}(\zeta)e^{i\pi\al_j\si_3},
              \qquad \mbox{for $\zeta \in \Ga_3\cup\Ga_7$,}\\
        \Psi_{j,+}(\zeta)
        &=&
            \Psi_{j,-}(\zeta)
            \begin{pmatrix}
                1 & 0 \cr
                e^{\pm i\pi(\bt_j-2\al_j)}  & 1
            \end{pmatrix},
\end{eqnarray}
\[
    \mbox{for $\zeta \in \Ga_2$ with plus sign in the exponent,
          for $\zeta \in\Ga_4$, with minus sign,}
\]
\be
        \Psi_{j,+}(\zeta)
        =
            \Psi_{j,-}(\zeta)
            \begin{pmatrix}
                1 & 0 \cr
                e^{\pm i\pi(\bt_j+2\al_j)}  & 1
            \end{pmatrix},\la{JumpPsi8}
\ee
\[
\mbox{for $\zeta \in \Ga_8$ with plus sign in the exponent,
          for $\zeta \in\Ga_6$, with minus sign.}
\]
\item [(c)]
As $\ze\to 0$, $\ze\in\bbc\setminus\cup_{j=1}^8\Ga_j$ outside the lenses,
\be\la{Psic1}
\Psi_j(z)=
\begin{pmatrix} O(\ze^{\al_j}) & O(\ze^{\al_j})+O(\ze^{-\al_j})\cr
 O(\ze^{\al_j}) & O(\ze^{\al_j})+O(\ze^{-\al_j})
\end{pmatrix}
\ee
if $\al_j\neq 0$, and
\be\la{Psic2}
\Psi_j(z)=
\begin{pmatrix} O(1) & O(\ln|\ze|)\cr
O(1) & O(\ln|\ze|)
\end{pmatrix}
\ee
if $\al_j=0$, $\bt_j\neq 0$.
The behavior of $\Psi_j(z)$ for $\ze\to 0$ in other sectors is obtained from these expressions by application of the appropriate jump conditions.
\end{enumerate}

We will solve this problem explicitly in terms of the confluent
hypergeometric function, $\psi(a,c;z)$ with the
parameters $a$, $c$ determined by $\al_j$, $\bt_j$.
A standard theory of the confluent
hypergeometric function is presented, e.g., in the appendix of \ci{IK}.

The following statement holds.

\begin{proposition}\la{Param}
Let $\al_j\pm\bt_j\neq -1,-2,\dots$ for all $j$. Then
a solution to the above RHP (a)--(c) for $\Psi_j(\ze)$, $0<\arg\ze<2\pi$,
is given by the following function in the sector I:
\begin{multline}\label{PsiConfl1}
\Psi_j(\zeta)=\Psi^{(I)}_j(\ze)=\left(\begin{matrix}
\ze^{\al_j}\psi(\al_j+\bt_j,1+2\al_j,\ze)e^{i\pi(2\bt_j+\al_j)}e^{-\ze/2} \cr
-\ze^{-\al_j}
\psi(1-\al_j+\bt_j,1-2\al_j,\ze)e^{i\pi(\bt_j-3\al_j)}e^{-\ze/2}
{\Gamma(1+\al_j+\bt_j)\over\Gamma(\al_j-\bt_j)}
\end{matrix}
\right.\\
\left.
\begin{matrix}
-\ze^{\al_j}
\psi(1+\al_j-\bt_j,1+2\al_j,e^{-i\pi}\ze)e^{i\pi(\bt_j+\al_j)}e^{\ze/2}
{\Gamma(1+\al_j-\bt_j)\over\Gamma(\al_j+\bt_j)}
\cr
\ze^{-\al_j}
\psi(-\al_j-\bt_j,1-2\al_j,e^{-i\pi}\ze)e^{-i\pi\al_j}
e^{\ze/2}\end{matrix}\right),
\end{multline}
where $\psi(a,b,x)$ is the confluent hypergeometric function of the second kind, and
$\Gamma(x)$ is Euler's $\Gamma$-function.
The solution in the other sectors is given by successive application
of the jump conditions (\ref{JumpPsi1}--\ref{JumpPsi8}) to (\ref{PsiConfl1}).
\end{proposition}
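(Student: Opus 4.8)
The plan is to verify directly that the explicit matrix (\ref{PsiConfl1}), together with its continuation to sectors II--VIII obtained by applying the jumps (\ref{JumpPsi1}--\ref{JumpPsi8}), satisfies all of (a)--(c); uniqueness is not needed, since for the parametrix we only require \emph{a} solution to insert into (\ref{Plb0}),(\ref{P1}). The conceptual reason the confluent hypergeometric function appears is that every jump matrix is piecewise \emph{constant}, so any solution $\Psi_j$ has single-valued logarithmic derivative $A(\ze)=\partial_\ze\Psi_j\,\Psi_j^{-1}$; the prescribed exponents $\pm\al_j$ at $\ze=0$ together with the rank-one irregular behaviour at $\ze=\infty$ (the factors $e^{\pm\ze/2}$) force $A(\ze)=A_1+A_0/\ze$ with constant matrices, i.e.\ Kummer's equation, whose recessive and dominant solutions at infinity furnish the two columns of (\ref{PsiConfl1}). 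Granting the standard theory of $\psi(a,c;\ze)$ (see the appendix of \ci{IK}), condition (a) is immediate: with the fixed branch $0<\arg\ze<2\pi$ each factor $\psi(\cdot,\cdot;\ze)$, $\psi(\cdot,\cdot;e^{-i\pi}\ze)$ is analytic off its cut and the powers $\ze^{\pm\al_j}$ are analytic in the open sector I, so every entry is analytic there.

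The heart of the proof is condition (b). I would continue $\Psi_j^{(I)}$ across $\Ga_1,\dots,\Ga_8$ using the asserted jumps and check that in each new sector the result again equals the appropriate confluent hypergeometric expression; equivalently, I verify that the connection (monodromy and Stokes) relations for Kummer functions reproduce exactly the matrices in (\ref{JumpPsi1}--\ref{JumpPsi8}). The key structural observation is that the two columns are Kummer-conjugate: with $a=\al_j+\bt_j$, $c=1+2\al_j$ one has $c-a=1+\al_j-\bt_j$, so the second column is built from $\psi(c-a,c;e^{-i\pi}\ze)$, the companion solution with the opposite exponential growth $e^{\ze/2}$. The off-diagonal jumps on $\Ga_1,\Ga_5$ and the diagonal jump $e^{i\pi\al_j\si_3}$ on $\Ga_3\cup\Ga_7$ encode only the branch structure of $\ze^{\pm\al_j}$ and of the arguments $e^{-i\pi}\ze$; the lower-triangular jumps on $\Ga_2,\Ga_4$ and $\Ga_6,\Ga_8$ are the genuine content and express the Stokes phenomenon of $\psi$ at $\ze=\infty$, where $\psi(a,c;\ze)\sim\ze^{-a}$ holds only in a sector of opening $3\pi$ and the subdominant exponential becomes visible across these rays. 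Concretely I would use the continuation of $\psi(a,c;\ze)$ under $\ze\mapsto e^{2\pi i}\ze$, the identity $\psi(a,c;\ze)=\ze^{1-c}\psi(1+a-c,2-c;\ze)$, and the connection between $\psi(a,c;\ze)$ and $\psi(c-a,c;e^{\pm i\pi}\ze)$ at infinity; matching the resulting constant $2\times2$ factors against (\ref{JumpPsi1}--\ref{JumpPsi8}) completes (b).

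Condition (c) follows from the behaviour of $\psi$ at the origin. Using the connection formula $\psi(a,c;\ze)=\frac{\Gamma(1-c)}{\Gamma(1+a-c)}M(a,c;\ze)+\frac{\Gamma(c-1)}{\Gamma(a)}\ze^{1-c}M(1+a-c,2-c;\ze)$ with $c=1\pm2\al_j$ (here $M$ denotes Kummer's function), each entry of (\ref{PsiConfl1}) expands near $\ze=0$ into the two Frobenius exponents $\ze^{\pm\al_j}$ after the prefactors $\ze^{\pm\al_j}$, which yields precisely the bounds (\ref{Psic1}); when $\al_j=0$ the two Frobenius solutions coalesce and the logarithm of (\ref{Psic2}) emerges from the degeneration of the same formula. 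The hypothesis $\al_j\pm\bt_j\neq-1,-2,\dots$ is exactly what is needed: it guarantees that the factors $\Gamma(1+\al_j+\bt_j)$ and $\Gamma(1+\al_j-\bt_j)$ in (\ref{PsiConfl1}) are finite, so the formula is well defined and no spurious additional singularity is created at $\ze=0$.

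The step I expect to be the main obstacle is (b), and within it the careful bookkeeping of branches: one must simultaneously track the cut of $\ze^{\pm\al_j}$ along $\Ga_3,\Ga_7$, the distinction between the arguments $\ze$ and $e^{-i\pi}\ze$ in the two columns, and the convention $0<\arg\ze<2\pi$, so that the Stokes multipliers of $\psi$ emerge with the precise phases $e^{\pm i\pi(\bt_j-2\al_j)}$ and $e^{\pm i\pi(\bt_j+2\al_j)}$ demanded by (\ref{JumpPsi1}--\ref{JumpPsi8}). Once the branches are fixed consistently, the remaining identities are routine applications of the connection formulas, and inserting the resulting $\Psi_j$ into (\ref{Plb0}),(\ref{P1}) produces the desired local parametrix.
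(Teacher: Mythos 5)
Your proposal is correct and follows essentially the same route as the paper: the proposition is proved by direct verification, with condition (c) checked via the expansion of $\psi(a,c,x)$ at the origin (including the logarithmic degeneration when $\al_j=0$) and condition (b) checked via the analytic-continuation identity for $\psi(a,c,e^{-2\pi i}\ze)$ and the companion solution $\psi(c-a,c,e^{-i\pi}\ze)$. The only presentational difference is that the paper first collapses the eight rays onto the real axis, so that only the two composite jumps $J_2J_3J_4^{-1}$ and $J_1^{-1}J_8^{-1}J_7^{-1}J_6J_5$ plus the cyclic consistency under $\ze\mapsto e^{2\pi i}\ze$ need to be verified, rather than each ray separately.
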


\begin{remark}
The functions $\ze^{\pm\al_j}$,  $\psi(a,b,\ze)$, and $\psi(a,b,e^{-i\pi}\ze)$
are defined on the universal covering of the
punctured plane $\zeta \in {\mathbb C}\setminus \{0\}$. Recall that
the branches are fixed by the condition $0<\arg\ze<2\pi$.
\end{remark}

\begin{proof}
The condition $(c)$ is verified in the sector $I$ by applying
to (\ref{PsiConfl1})
the standard expansion of the confluent hypergeometric function at zero
(see, e.g., \ci{BE}), namely,
\begin{multline}
\psi(a,c,x)={\Ga(1-c)\over \Ga(1+a-c)}\left(1+O(x)\right)+
{\Ga(c-1)\over \Ga(a)}x^{1-c}\left(1+O(x)\right),\\
x\to 0,\qquad c\notin\bbz,
\end{multline}
or, to cover also the integer values of $c$:
\begin{multline}\la{psi01}
\psi(a,c,x)=
\begin{cases}
{\Ga(c-1)\over \Ga(a)}x^{1-c}\left(1+O(x\ln x)\right)+O(1),& \Re c>1\cr
{\Ga(1-c)\over \Ga(1+a-c)}\left(1+O(x)\right)+
{\Ga(c-1)\over \Ga(a)}x^{1-c}\left(1+O(x)\right),& \Re c=1, c\neq 1\cr
-{1 \over \Gamma(a)}
\left(\ln x +{\Gamma'(a)\over \Gamma(a)}-2\gamma_E\right)+O(x\ln x),& c=1\cr
{\Ga(1-c)\over \Ga(1+a-c)}\left(1+O(x\ln x)+O(x^{1-c})\right),& \Re c<1
\end{cases},\qquad x\to 0,
\end{multline}
where $\gamma_E=0.5772\dots$ is Euler's constant.

We verify the condition $(c)$ similarly in the other sectors.

To verify $(b)$,
reduce the contour of Figure \ref{fig2} to the real line,
oriented from right to left, by
extending the sectors I and IV and collapsing the jump conditions.
We then obtain the following reduced RHP:
\begin{multline}\label{newjump}
\Psi^{(IV)}_{j,+}(\ze)=\Psi^{(I)}_{j,-}(\ze) J_2 J_3 J_4^{-1}=
\Psi^{(I)}_{j,-}(\ze)\bm
                e^{i\pi\al_j} & 0 \cr
                2i\sin(\pi(\bt_j-\al_j))& e^{-i\pi\al_j}\em, \qquad \ze<0;\\
\Psi^{(IV)}_{j,+}(\ze)=\Psi^{(I)}_{j,-}(\ze) J_1^{-1} J_8^{-1} J_7^{-1}J_6 J_5 =
\Psi^{(I)}_{j,-}(\ze)\bm
                e^{-i\pi(2\bt_j-\al_j)} & 2i\sin(\pi(\al_j+\bt_j))\cr
                 0             & e^{i\pi(2\bt_j-\al_j)}
                    \em, \qquad \ze>0,
\end{multline}
where the jump matrices $J_k$ correspond to jumps on the contours $\Ga_k$,
$k=1,\dots, 8$ as defined in (\ref{JumpPsi1}--\ref{JumpPsi8}).

The confluent hypergeometric function possesses
the following transformation property on the universal covering of the
punctured plane:
\be\label{prop}
\psi(a,c,e^{-2\pi i}\ze)=e^{2\pi i a}
\psi(a,c,\ze)-{2\pi i \over \Gamma(a)\Gamma(a-c+1)}
e^{i\pi a}e^\ze \psi(c-a,c,e^{-i\pi}\ze),
\ee
This property is proved in the appendix of \ci{IK} (equation (7.30)).

Taking $\Psi^{(I)}_j(\ze)$ given by (\ref{PsiConfl1}) and applying to
it the jump condition for $\ze<0$, we obtain using (\ref{prop})
and the standard properties of $\Gamma$-function the following expressions
for the first column of $\Psi^{(IV)}$:
\begin{eqnarray}\label{Psi3}
\Psi^{(IV)}_{j,11}(\ze)&=&
\ze^{\al_j}\psi(\al_j+\bt_j,1+2\al_j,e^{-2\pi i}\ze)e^{-\ze/2}\\
\Psi^{(IV)}_{j,21}(\ze)&=&
-\ze^{-\al_j}\psi(1-\al_j+\bt_j,1-2\al_j,e^{-2\pi i}\ze)e^{-i\pi\bt_j}e^{-\ze/2}
{\Gamma(1+\al_j+\bt_j)\over\Gamma(\al_j-\bt_j)}
\end{eqnarray}
The second column is
\be\la{Psi3l}
\Psi^{(IV)}_{j,12}(\ze)=\Psi^{(I)}_{j,12}(\ze)e^{-i\pi\al_j},\qquad
\Psi^{(IV)}_{j,22}(\ze)=\Psi^{(I)}_{j,22}(\ze)e^{-i\pi\al_j}
\ee

Now applying to this function the jump condition
for $\ze>0$ and using again (\ref{prop}), we obtain (note that
as a result of these manipulations we moved $\ze\rightarrow e^{2\pi i}\ze$)
\be
\Psi^{(I)}_j(\ze)=\Psi_j(\ze)
\ee
with $0<\arg\ze<\pi$, i.e. the $\Psi^{(I)}_j(\ze)$ we started with.
Thus, (\ref{PsiConfl1}, \ref{Psi3})
is a solution to the reduced RHP given by the jump condition (\ref{newjump}).
Therefore, (\ref{PsiConfl1}, \ref{Psi3}--\ref{Psi3l})
give a solution to the original RHP
for $\Psi$ in the sectors $I$ and $IV$, respectively; and the solution
in the other sectors is reconstructed using (\ref{JumpPsi1}--\ref{JumpPsi8}).
Proposition \ref{Param} is proved.
\end{proof}

We will now match this solution
with $N(z)$
on the boundary $\partial U_{z_j}$ for large $n$. The limit $n\to\infty$,
$z\in \partial U_{z_j}$,
corresponds to $\ze\to\infty$, therefore
we need the asymptotic expansion of $\Psi_j(\ze)$.
We use the classical result
(e.g., \ci{BE} or Eq.(7.2) of
\ci{IK}) for the confluent hypergeometric function:
\be\la{asp}
\psi(a,c,x)=x^{-a}[1-a(1+a-c)x^{-1}+O(x^{-2})],\qquad |x|\to\infty,\qquad
-3\pi/2<\arg x<3\pi/2.
\ee
Note that these asymptotics can be taken both for $\psi(a,c,\ze)$ and
$\psi(a,c,e^{-i\pi}\ze)$ for $\ze\in I$.
We apply this result to (\ref{PsiConfl1}) and thus obtain the asymptotics of the
solution in the sector $I$. The ``proper'' triangular structure of the jump
matrices implies that these asymptotics
remain the same in the sector $II$ as well, namely:
\begin{multline}\la{Psias}
\Psi^{(I)}_j(\ze)=\Psi^{(II)}_j(\ze)=
\left[ I+{1\over\ze}\bm
\al_j^2-\bt_j^2&
{\Ga(1+\al_j-\bt_j)\over\Ga(\al_j+\bt_j)}e^{i\pi(\bt_j+4\al_j)}\cr
-{\Ga(1+\al_j+\bt_j)\over\Ga(\al_j-\bt_j)}e^{-i\pi(\bt_j+4\al_j)}
&-(\al_j^2-\bt_j^2)
\em
+O(\ze^{-2})\right]\\
\times\ze^{-\bt_j\si_3}e^{-\ze\si_3/2}
\bm
e^{i\pi(2\bt_j+\al_j)} & 0\cr 0 & e^{-i\pi(\bt_j+2\al_j)}
\em,\qquad \ze\to\infty,\quad \ze\in I,II,\qquad \al_j\pm\bt_j\neq -1,-2,\dots
\end{multline}
Furthermore, applying the jump matrices, we obtain the following
asymptotics for $\Psi_j(\ze)$ in
the other sectors (here $\Psi^{(I)}_j(\ze)$ stands for the analytic continuation
of the r.h.s. of (\ref{Psias}) to $0<\arg\ze<2\pi$) as $\ze\to\infty$:
\begin{eqnarray}
\Psi^{(III)}_j(\ze)=\Psi^{(IV)}_j(\ze)&=&
\Psi^{(I)}_j(\ze)e^{i\pi\al_j\si_3},\\
\Psi^{(V)}_j(\ze)=\Psi^{(VI)}_j(\ze)&=&
\Psi^{(I)}_j(\ze)
\bm
0& -e^{i\pi\bt_j}\cr
e^{-i\pi\bt_j}& 0
\em
e^{-i\pi\al_j\si_3},\\
\Psi^{(VII)}_j(\ze)=\Psi^{(VIII)}_j(\ze)&=&
\Psi^{(I)}_j(\ze)
\bm
0& -e^{-i\pi\bt_j}\cr
e^{i\pi\bt_j}& 0
\em.
\end{eqnarray}

Now substituting these asymptotics into the condition on $E$:
\be\la{match2}
P_{z_j}(z)N^{-1}(z)=
E(z)\Psi_j(\ze)F_j(z)^{-\si_3}z^{\pm n\si_3/2}N^{-1}(z)=
I+o(1),
\ee
we obtain
\begin{eqnarray}\la{E1}
&E(z)=N(z)\ze^{\bt_j\si_3} F_j^{\si_3}(z) z_j^{-n\si_3/2}
\bm
e^{-i\pi(2\bt_j+\al_j)} &0\cr 0 & e^{i\pi(\bt_j+2\al_j)}
\em,
\quad
\mbox{for $\ze\in I,II$},\\
&E(z)=N(z)\ze^{\bt_j\si_3} F_j^{\si_3}(z) z_j^{-n\si_3/2}
\bm
e^{-2\pi i(\bt_j+\al_j)} &0\cr 0 & e^{i\pi(\bt_j+3\al_j)}
\em,
\quad
\mbox{for $\ze\in III,IV$},
\end{eqnarray}
\begin{eqnarray}
&E(z)=N(z)\ze^{-\bt_j\si_3} F_j^{\si_3}(z) z_j^{n\si_3/2}
\bm
0& e^{i\pi (3\al_j+2\bt_j)}\cr -e^{-i\pi(3\bt_j+2\al_j)}& 0
\em,
\quad
\mbox{for $\ze\in V,VI$},\\
&E(z)=N(z)\ze^{-\bt_j\si_3} F_j^{\si_3}(z) z_j^{n\si_3/2}
\bm
0& e^{2\pi i\al_j}\cr -e^{-i\pi(\bt_j+\al_j)}& 0
\em,
\quad
\mbox{for $\ze\in VII,VIII$}.\la{E4}
\end{eqnarray}

The dependence on $z$ enters into these expressions only via the combination
$\mathcal{D}(z)/(\ze^{\bt_j}F_j(z))$ for $|z|<1$ (i.e., $\ze\in I,II,III,IV$) and
the combination $\mathcal{D}(z)F_j(z)/\ze^{\bt_j}$ for $|z|>1$
(i.e., $\ze\in V,VI,VII,VIII$). Expanding the logarithm in (\ref{zeta}) in
powers of $u=z-z_j$, we see immediately from (\ref{Dl1},\ref{Dg1},\ref{Fj},\ref{F0})
that the mentioned combinations, and therefore $E(z)$ have no singularity
at $z_j$. Thus $E(z)$ is an analytic function in $U_{z_j}$.
In what follows, we will need more detailed information about the behaviour
of some of these combinations as $u\to 0$.
Namely, it is easy to obtain from (\ref{zeta},\ref{Dl1},\ref{Fj},\ref{F0}) and (\ref{abszzj}) that
\be\la{etab}
F_j(z)=\eta_j e^{-3i\pi\al_j/2}z_j^{-\al_j}
u^{\al_j}(1+O(u)),\qquad u=z-z_j,\qquad \ze\in I,
\ee
where
\be\la{eta}
\eta_j=e^{V(z_j)/2}\exp\left\{-{i\pi\over 2}\left(\sum_{k=0}^{j-1}\bt_k-
\sum_{k=j+1}^m\bt_k\right)\right\}
\prod_{k\neq j}\left({z_j\over z_k}\right)^{\bt_k/2}|z_j-z_k|^{\al_k},
\ee
and
\begin{eqnarray}\la{mub}
\left({\mathcal{D}(z)\over \ze^{\bt_j}F_j(z)}\right)^2=\mu^2_j e^{i\pi(\al_j-2\bt_j)}
n^{-2\bt_j}(1+O(u)),
\qquad u=z-z_j,\qquad \ze\in I,
\\
\mu_j=\left(e^{V_0}\frac{b_+(z_j)}{b_-(z_j)}\right)^{1/2}
\exp\left\{-{i\pi\over2}\left(\sum_{k=0}^{j-1}\al_k-
\sum_{k=j+1}^m\al_k\right)\right\}
\prod_{k\neq j}\left({z_j\over z_k}\right)^{\al_k/2}|z_j-z_k|^{\bt_k}.
\la{mu}
\end{eqnarray}
To derive (\ref{mu}), we used, in particular, the factorization (\ref{WienH}).
The sums from $0$ to $-1$ and from $m+1$ to $m$ are set to zero.

It is seen directly from (\ref{E1}--\ref{E4}) that
$\det E(z)=e^{i\pi(\al_j-\bt_j)}$.
Note that as follows by Liouville's theorem from the RHP,
$\det\Psi_j(\ze)=e^{-i\pi(\al_j-\bt_j)}$:
this function has no jumps, the singularity at zero is removable
as $\Re\al_j>-1/2$, and the constant value follows from the asymptotics
(\ref{Psias}). Combining these results, we see from (\ref{Plb0}) that
$\det P_{z_j}(z)=1$.
Comparing the conditions (\ref{Psic1},\ref{Psic2}) and (\ref{Sd1},\ref{Sd2}),
we see that
the singularity of  $S(z)P_{z_j}(z)^{-1}$ at $z=z_j$ is at most
$O(|z-z_j|^{2\al_j})$ or $O(\ln^2|z-z_j|)$. However, by construction of $P_{z_j}$,
the function $S(z)P_{z_j}(z)^{-1}$ has no jumps  in a neighborhood of $U_{z_j}$
and hence this singularity is removable. Thus,
$S(z)P_{z_j}(z)^{-1}$ is analytic in a neighborhood of $U_{z_j}$.

Note that the error term in (\ref{match2}) $o(1)
=n^{-\Re\bt_j\si_3}O(n^{-1})n^{\Re\bt_j\si_3}$. It is $o(1)$ for
$-1/2<\Re\bt_j<1/2$.

This completes the construction of the parametrix at $z_j$:
it is given by the formulae
(\ref{Plb0},\ref{P1},\ref{E1}--\ref{E4}) and Proposition \ref{Param}.

Considering further terms in (\ref{Psias}), we can extend
(\ref{match2}) into the full asymptotic series in inverse powers of $n$.
For our calculations we need to know explicitly the first correction term:
\begin{multline}\la{az}
P_{z_j}(z)N^{-1}(z)=I+\De_1(z)+
n^{-\Re\bt_j\si_3}O(1/n^2)n^{\Re\bt_j\si_3},\\
\De_1(z)={1\over\ze}
\bm
-(\al_j^2-\bt_j^2)&
{\Ga(1+\al_j+\bt_j)\over \Ga(\al_j-\bt_j)}
\left({\mathcal{D}(z)\over\ze^{\bt_j}F_j(z)}\right)^2
z_j^n e^{i\pi(2\bt_j-\al_j)}\cr
-{\Ga(1+\al_j-\bt_j)\over \Ga(\al_j+\bt_j)}
\left({\mathcal{D}(z)\over\ze^{\bt_j}F_j(z)}\right)^{-2}
z_j^{-n} e^{-i\pi(2\bt_j-\al_j)}&
\al_j^2-\bt_j^2
\em,\\
\qquad z\in\partial z(I),\qquad \al_j\pm\bt_j\neq -1,-2,\dots,
\end{multline}
where $\partial z(I)$ is the
part of $\partial U_{z_j}$ whose $\ze$-image is in $I$.
As a consideration of the other sectors shows,
this expression for $\De_1(z)$ extends by analytic continuation
to the whole boundary $\partial U_{z_j}$.
As follows from (\ref{mub}), it gives a meromorphic function in
a neighborhood of $U_{z_j}$ with a simple pole at $z=z_j$.

The error term $O(1/n^2)$ in (\ref{az}) is uniform in $z$ on $\partial U_{z_j}$.


\subsection{R-RHP}\la{RRHP}
Throughout this section we assume that $\al_j\pm\bt_j\neq -1,-2,\dots$ for
all $j=0,1,\dots,m$.

Let
\be
R(z)=\begin{cases}S(z)N^{-1}(z),&
z\in U_\infty\setminus\Gamma,\qquad U_\infty=\bbc\setminus\cup_{j=0}^m U_{z_j},\cr
S(z)P_{z_j}^{-1}(z),&
z\in U_{z_j}\setminus\Gamma,\qquad j=0,\dots, m.
\end{cases}\la{wtR}
\ee
It is easy to verify that this function has jumps only on
$\partial U_{z_j}$, and parts of
$\Si_j$,  $\Si^{''}_j$
lying outside the neighborhoods $U_{z_j}$ (we
denote these parts without the end-points $\Si^\mathrm{out}$, $\Si^{''\mathrm{out}}$).
The full contour $\Gamma$ is shown in Figure 3 where $U_j\equiv U_{z_j}$. 
Away from $\Gamma$, as a standard argument shows, $R(z)$ is analytic.
Moreover, we have: $R(z)=I+O(1/z)$ as $z\to\infty$.

\begin{figure}
\centerline{\psfig{file=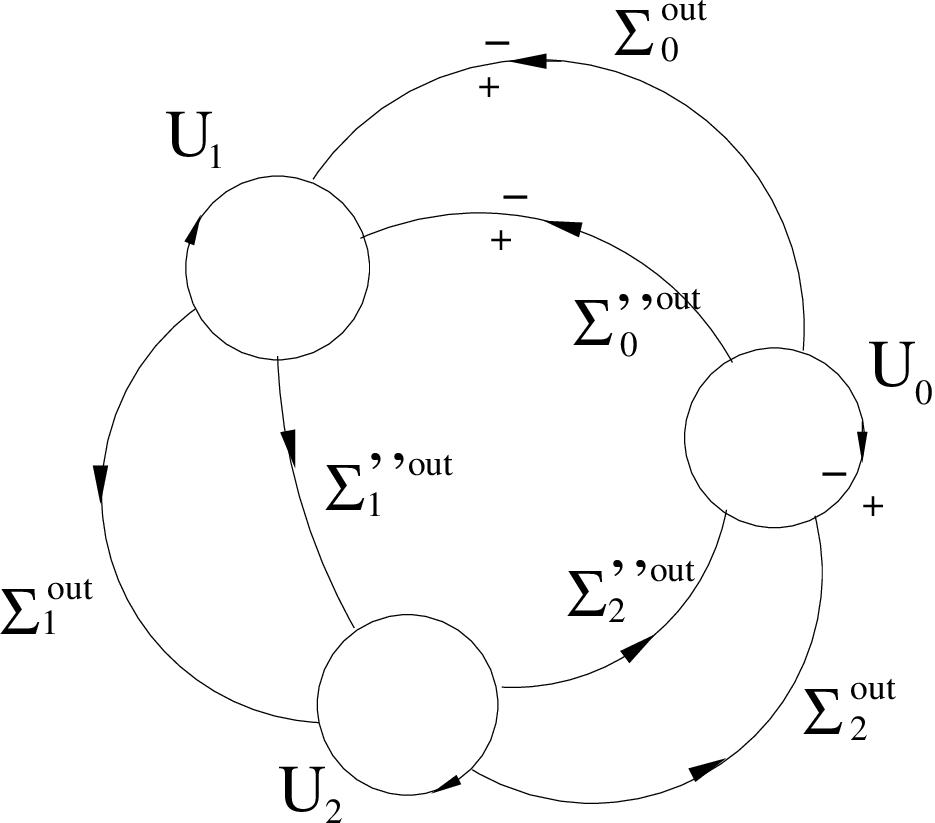,width=3.0in,angle=0}}
\vspace{0cm}
\caption{
Contour $\Gamma$ for the $R$ and $\wt R$ Riemann-Hilbert problems ($m=2$).}
\label{fig3}
\end{figure}

The jumps of $R(z)$ are as follows:
\begin{eqnarray}
R_+(z)&=&R_-(z)N(z)
\bm
1&0\cr f(z)^{-1}z^{-n}&1
\em
N(z)^{-1},\qquad z\in\Si_j^\mathrm{out},\la{Rs1}\\
R_+(z)&=&R_-(z)N(z)
\bm
1&0\cr f(z)^{-1}z^{n}&1
\em
N(z)^{-1},\qquad z\in\Si_j^{''\mathrm{out}},\la{Rs2}\\
R_+(z)&=&R_-(z)P_{z_j}(z)N (z)^{-1},\qquad
z\in \partial U_{z_j}\setminus
\mbox{\{intersection points\}},\\
j=0,\dots,m.\nonumber
\end{eqnarray}

The jump matrix on $\Si^\mathrm{out}$, $\Si^{''\mathrm{out}}$ can be
estimated uniformly in $\al_j$, $\bt_j$ as $I+O(\exp(-\ep n))$, where
$\ep$ is a positive constant.
The jump matrices on $\partial U_{z_j}$ admit a uniform expansion in the
inverse powers of $n$ conjugated by $n^{\bt_j\si_3}z_j^{-n\si_3/2}$
(the first term is given explicitly by (\ref{az})):
\be\la{Deas}
I+\De_1(z)+\De_2(z)+\cdots+\De_{k}(z)+\De^{(r)}_{k+1},
\qquad z\in\partial U_{z_j}.
\ee
Every $\De_p(z)$, $\De^{(r)}_p(z)$, $p=1,2,\dots$,
$z\in\cup_{j=0}^m\partial U_{z_j}$ is of the form
\be\la{ordDe}
\sum_{j=0}^m a_j^{-\si_3}O(n^{-p})a_j^{\si_3},\qquad a_j\equiv n^{\bt_j}
z_j^{-n/2},
\ee
it is of order $n^{2\max_j|\Re\bt_j|-p}$.

To obtain a standard solution of the $R$-RHP in terms of a Neumann series
(see, e.g., \cite{Dstrong}) we must have
$n^{2\max_j|\Re\bt_j|-1}=o(1)$, that is
$\Re\bt_j\in (-1/2,1/2)$ for all $j=0,1,\dots,m$.

However, it is possible to obtain the solution in any open or half-closed interval of length 1,
i.e. for $|||\bt|||<1$.
Namely, let $|||\bt|||<1$ and consider the transformation
\be\la{Rtilde}
\wt R(z)=n^{\om\si_3}R(z) n^{-\om\si_3}\qquad z\in\bbc\setminus\Gamma,
\ee
where
\be\la{omega}
\om={1\over 2}(\min_j\Re\bt_j+\max_j\Re\bt_j)
\ee
will ``shift'' all $\Re\bt_j$ (in the conjugation $n^{\bt_j}$ terms)
inside the interval $(-1/2,1/2)$.
Note that $\om=\Re\bt_{j_0}$ if only one $\Re\bt_{j_0}\neq 0$, and $\om=0$
if all $\Re\bt_j=0$.

In the RHP for $\wt R(z)$, the condition at infinity and the uniform
exponential estimate $I+O(\exp(-\ep n))$ (with different $\ep$)
of the jump matrices on $\Si^\mathrm{out}$, $\Si^{''\mathrm{out}}$ are preserved,
while the jump matrices on $\partial U_{z_j}$ have the form:
\be\la{Deas2}
I+n^{\om\si_3}\De_1(z)n^{-\om\si_3}+\cdots+
n^{\om\si_3}\De_{k}(z)n^{-\om\si_3}+
n^{\om\si_3}\De_{k+1}^{(r)}(z)n^{-\om\si_3},\qquad
z\in\partial U_{z_j},
\ee
where the order of each $n^{\om\si_3}\De_p(z)n^{-\om\si_3}$,
$n^{\om\si_3}\De_p^{(r)}(z)n^{-\om\si_3}$, $p=1,2,\dots$,
$z\in\cup_{j=0}^m\partial U_{z_j}$
is
\[
O(n^{2\max_j|\Re\bt_j-\om|-p}).
\]
This implies that the standard analysis can be applied to the $\wt R$-RHP problem
in the range $\Re\bt_j\in (q-1/2,q+1/2)$, $j=0,1,\dots,m$,
for any $q\in\bbr$, and we obtain the asymptotic expansion
\be\la{Ras}
\wt R(z)=I+\sum_{p=1}^{k}\wt R_p(z)+\wt R^{(r)}_{k+1}(z),\quad k=1,2,\dots
\ee
In our case the error term
\be\la{Rerr}
\wt R^{(r)}_{k+1}(z)=O(|\wt R_{k+1}(z)|)+O(|\wt R_{k+2}(z)|).
\ee

The functions $\wt R_j(z)$ are computed recursively.
In this paper, we will need explicit expressions only for
the first two. Accordingly, set $k=2$.
The function  $\wt R_1(z)$ is found from the conditions
that it is analytic outside
$\partial U=\cup_{j=0}^m\partial U_{z_j}$,
$\wt R_1(z)\to 0$ as $z\to \infty$, and
\be
\wt R_{1,+}(z)=\wt R_{1,-}(z)+n^{\om\si_3}\De_1(z)n^{-\om\si_3},\qquad
z\in\partial U.
\ee
The solution is easily written. First denote
\be
R_p(z)\equiv n^{-\om\si_3}\wt R_p(z)n^{\om\si_3},\qquad
R_p^{(r)}(z)\equiv n^{-\om\si_3}\wt R_p^{(r)}(z)n^{\om\si_3},
\ee
and write for $R$:
\begin{multline}
R_1(z)=
{1\over 2\pi i}\int_{\partial U}
{\De_1(x)dx\over x-z}\\
=\begin{cases}
\sum_{k=0}^m {A_k\over z-z_k},& z\in\bbc\setminus \cup_{j=0}^m U_{z_j}\cr
\sum_{k=0}^m {A_k\over z-z_k}-\De_1(z),& z\in U_{z_j},\quad j=0,1,\dots, m.
\end{cases},\qquad
\partial U=\cup_{j=0}^m\partial U_{z_j}.
\la{plem}
\end{multline}
where the contours in the integral are traversed in the negative direction,
and $A_k$ are the coefficients in the Laurent expansion of $\De_1(z)$:
\be
\De_1(z)={A_k\over z-z_k}+B_k+O(z-z_k),\qquad z\to z_k,\qquad k=0,1,\dots,m.
\ee
The coefficients are easy to write using (\ref{az}) and (\ref{mub}):
\be\la{A}
A_k\equiv A^{(n)}_k={z_k\over n}
\bm
-(\al_k^2-\bt_k^2)&
{\Ga(1+\al_k+\bt_k)\over \Ga(\al_k-\bt_k)}
z_k^n \mu^2_k n^{-2\bt_k}\cr
-{\Ga(1+\al_k-\bt_k)\over \Ga(\al_k+\bt_k)}
z_k^{-n} \mu_k^{-2}n^{2\bt_k}&
\al_k^2-\bt_k^2
\em.
\ee
An expression for $B_k$ is also easy to find, but it is not needed below.

The function $\wt R_2$ is now found from the conditions that $\wt R_2(z)\to 0$
as $z\to\infty$, is analytic outside $\partial U$, and
\be
\wt R_{2,+}(z)=\wt R_{2,-}(z)+\wt R_{1,-}(z)n^{\om\si_3}\De_1(z)n^{-\om\si_3}+
n^{\om\si_3}\De_2(z)n^{-\om\si_3} ,\qquad
z\in\partial U.
\ee
The solution to this RHP is
\be\la{R2}
\wt R_2(z)=
 {1\over 2\pi i}\int_{\partial U}\left(
\wt R_{1,-}(x)n^{\om\si_3}\De_1(x)n^{-\om\si_3}+
n^{\om\si_3}\De_2(x)n^{-\om\si_3}
\right){dx\over x-z}.
\ee

Further standard analysis (cf. (\ref{Rerr})) shows that the error term
\be\la{orderr}
R^{(r)}_3(z)=
\begin{pmatrix}
O(\de/n)+O(\de^2) & O\left(\de\max_k{n^{-2\Re\bt_k}\over n}\right)\cr
O\left(\de\max_k{n^{2\Re\bt_k}\over n}\right) & O(\de/n)+O(\de^2)
\end{pmatrix},
\ee
where $\de$ is given by (\ref{de}).

In particular, as is clear from the above, if there is only one
nonzero $\bt_{j_0}$, we obtain the expansion of $\wt R(z)$
purely in inverse integer powers of $n$ valid in fact for all $\bt_{j_0}\in \bbc$,
$\al_{j_0}\pm\bt_{j_0}\neq -1,-2,\dots$

It is clear from the construction and the properties of the asymptotic series
of the confluent hypergeometric function that
the error terms $\wt R^{(r)}_k(z)$ (\ref{Rerr}), and in particular (\ref{orderr}),
are uniform for all $z$ and for $\bt_j$ in bounded sets of the strip
$q-1/2<\Re\bt_j< q+1/2$, $j=0,1,\dots m$,
for $\al_j$ in bounded sets of the half-plane
$\Re\al_j>-1/2$, and for $\al_j\pm\bt_j$ away from neighborhoods of the negative integers.
Moreover, the series (\ref{Ras}) is differentiable in $\al_j$, $\bt_j$.

For future use note that if $V(z)=V_r(z)+(V(z)-V_r(z))h$, $h\in[0,1]$, and $V_r(z)$ is analytic
in a neighborhood of the unit circle, then the error terms are uniform in the parameter
$h\in[0,1]$.

\section{Orthogonal polynomials. Proof of Theorem \ref{poly}}
Using results of the previous section, we can provide a complete
asymptotic analysis of the polynomials orthogonal with weight (\ref{fFH})
on the unit circle with analytic $V(z)$. In this section we will find the
asymptotic expressions for $\chi_n$, $\phi_n(0)$, and $\widehat\phi_n(0)$.

First, it follows immediately from (\ref{RHM}) that
\be
\chi_{n-1}^2=-Y^{(n)}_{21}(0).
\ee
Tracing back the transformations $R\rightarrow S\rightarrow T
\rightarrow Y$, we obtain for $z$ inside the unit circle and outside the lenses:
\begin{multline}
Y(z)=T(z)=S(z)=R(z) N(z)=
n^{-\om\si_3}\wt R(z)n^{\om\si_3}N(z)\\=
n^{-\om\si_3}[I+\wt R_1(z)+\wt R_2(z)+\wt R^{(r)}_3(z)]n^{\om\si_3}N(z)\\=
\left[I+R_1(z)+R_2(z)+
R^{(r)}_3(z)
\right]\mathcal{D}(z)^{\si_3}
\bm
0&1\cr -1&0
\em.
\end{multline}
Taking the $21$ matrix element and setting $z=0$ we obtain
\be\la{chimid}
\chi_{n-1}^2=-Y^{(n)}_{21}(0)=
\mathcal{D}(0)^{-1}\left[1+R_{1,22}(0)+R_{2,22}(0)+O(\de/n+\de^2)
\right],
\ee
where we used the estimate (\ref{orderr}) for $R^{(r)}_3(z)$.

By (\ref{Dl1})
\be
\mathcal{D}(0)^{-1}=
\exp\left[-\int_0^{2\pi}V(\th){d\th\over2\pi}\right]=e^{-V_0}.
\ee
Using (\ref{plem}) and (\ref{A}) we obtain
\be
R_{1,22}(0)=-\sum_{k=0}^m {A_{k,22}\over z_k}=
-{1\over n}\sum_{k=0}^m\left(\al_k^2-\bt_k^2\right).
\ee

Conjugating (\ref{R2}) with $n^{\om\si_3}$, setting there
$z=0$, and applying (\ref{plem}),
we obtain:
\be
R_2(0)= -\sum_{j=0}^m z_j^{-1} \sum_{k\neq j}{A_kA_j\over z_j-z_k}+
{1\over n^2}O
\bm
1& \sum_j n^{-2\bt_j}\cr
\sum_j n^{2\bt_j} & 1
\em.
\ee
From (\ref{A}),
\begin{multline}
(A_kA_j)_{22}=z_k z_j\left[(\al_j^2-\bt_j^2)(\al_k^2-\bt_k^2)n^{-2}
\right.\\
\left.
-n^{2(\bt_k-\bt_j-1)}\left({z_j\over z_k}\right)^n
{\Ga(1+\al_j+\bt_j)\Ga(1+\al_k-\bt_k)\over
\Ga(\al_j-\bt_j)\Ga(\al_k+\bt_k)}{\mu^2_j\over\mu^2_k}
\right],
\end{multline}
where $\mu^2_j$ are defined in (\ref{mu}).

Substituting the last 3 equations into (\ref{chimid}), we finally obtain
(\ref{aschi}).

We now turn our attention to $\phi_n(0)$. Using (\ref{RHM}), we have
\be\la{phi0}
\phi_n(0)=\chi_n Y_{11}^{(n)}(0)=
\chi_n\left(R(0) \mathcal{D}(0)^{\si_3}
\bm
0&1\cr
-1&0
\em
\right)_{11}=
-\chi_n\mathcal{D}(0)^{-1}\left( R_{1,12}(0)+
R^{(r)}_{2,12}(0)\right).
\ee
By (\ref{plem},\ref{A}),
\be\la{phi0r}
R_{1,12}(0)=-\sum_{k=0}^m {A_{k,12}\over z_k}=
-{1\over n}\sum_{j=0}^m n^{-2\bt_j}z_j^n
{\Ga(1+\al_j+\bt_j)\over\Ga(\al_j-\bt_j)}\mu^2_j,
\ee
and, recalling (\ref{Rerr}), we obtain (\ref{asphi}).

Finally, starting again with (\ref{RHM}), we have
\begin{multline}
\widehat\phi_{n-1}(0)=-\frac{1}{\chi_{n-1}}\lim_{z\to\infty}
\frac{Y_{21}^{(n)}(z)}{z^{n-1}}=
-\frac{1}{\chi_{n-1}}\lim_{z\to\infty}{1\over z^{n-1}}(R(z)\mathcal{D}(z)^{\si_3}
z^{n\si_3})_{21}\\
=
-\frac{1}{\chi_{n-1}}\left(\lim_{z\to\infty}zR_{1,21}(z)+
O\left(\left[\de+{1\over n}\right]\max_k{n^{2\Re\bt_k}\over n}
\right)\right).
\end{multline}
We have
\be
\lim_{z\to\infty}zR_{21}(z)=
\sum_{k=0}^m A_{k,21}=
-{1\over n}\sum_{j=0}^m n^{2\bt_j}z_j^{-n+1}
{\Ga(1+\al_j-\bt_j)\over\Ga(\al_j+\bt_j)}\mu_j^{-2},
\ee
and therefore, recalling (\ref{aschi}), obtain (\ref{ashatphi}).

Note that uniformity and differentiability properties of the asymptotic
series of Theorem \ref{poly} follow from those
of the $\wt R$-expansion of the previous section.

\section{Toeplitz determinants. Proof of Theorem \ref{BT}}\la{secBT}

\subsection{The case of analytic $V(z)$}\la{secBTa}
First, let $V(z)$ be analytic in a neighborhood of the unit circle.
Consider the set
$\bt^{(r)}_j$ constructed in the proof of Lemma \ref{repr}. We have to consider only
the second class, i.e. $|||\bt^{(r)}|||=1$. We then have, relabeling
$\bt^{(r)}_j$ according to increasing real part,
\be
\Re\bt^{(r)}_1=\cdots=\Re\bt^{(r)}_p<\Re\bt^{(r)}_{p+1}\le\cdots
\le\Re\bt^{(r)}_{m'-\ell}<
\Re\bt^{(r)}_{m'-\ell+1}=\cdots=\Re\bt^{(r)}_{m'},
\ee
for some $p,\ell>0$. Here $m'$ is the number of singularities: $m'=m+1$
if $z=1$ is a singularity ($\al_0\neq 0$ or $\bt_0\neq 0$), otherwise $m'=m$.
Now consider the symbol (not a FH-representation of $f$)
$\wt f$ of type (\ref{fFH})
with beta-parameters denoted by $\wt\bt$ and given by
$\wt\bt_j=\bt^{(r)}_j$ for $j=1,\dots, m'-\ell$, and
$\wt\bt_j=\bt^{(r)}_j-1$ for $j=m'-\ell+1,\dots, m'$.
It is easy to see that the original symbol $f$ has ${\ell+p\choose\ell}$
FH-representations in $\mathcal{M}$ obtained by shifting any $\ell$
out of $\ell+p$ parameters
$\wt\bt_j$, say $\wt\bt_{i_1},\dots,\wt\bt_{i_\ell}$,
with the smallest real part to the right by $1$.
Thus,
\be\la{ffin}
f(z)=(-1)^\ell\prod_{j=0}^{m}z_j^{L_j}\times z_{i_1}^{-1}\cdots z_{i_\ell}^{-1} z^\ell \wt f(z),
\ee
for appropriate $L_j$.

Let us now and until the end of this section relabel
$\wt\bt_j$, $\al_j$, $L_j$, and $z_j$ according to increasing real
part of $\wt\bt_j$. Thus, in particular,
\be\la{numbering}
\Re\wt\bt_1=\cdots=\Re\wt\bt_{\ell+p}<\Re\wt\bt_{\ell+p+1}.
\ee
Assume that the set of all the minimizing FH-representations
$\mathcal{M}$ is non-degenerate (see Introduction). This implies that
$\al_j\pm\wt\bt_j\neq -1,-2,\dots$.

We now apply Lemma \ref{Chr} (equation (\ref{213}))
to finish the proof of Theorem \ref{BT}. We need to evaluate
the determinant $F_n$, $n\ge N_0$ for a sufficiently large $N_0>0$. First, from (\ref{RHM}),
tracing back the transformations of the RH problem and using (\ref{plem},\ref{A})
we obtain (cf. (\ref{phi0},\ref{phi0r})) for the polynomials orthonormal with weight $\wt f(z)$:
\be\la{phiz}
\phi_n(z)/\chi_n=
\mathcal{D}(z)^{-1}\rho_n(z),\qquad
\rho_n(z)=
-\sum_{k=0}^m {A^{(n)}_{k,12}\over z-z_k}
+O\left(\left[\de+{1\over n}\right]n^{-2\Re\wt\bt_1-1}
\right).
\ee
This expansion is uniform and differentiable in a neighborhood of zero.
A simple algebra shows that in the determinant
\be
F_n=
\left|
\begin{matrix}
\phi_n(0)/\chi_n& \phi_{n+1}(0)/\chi_{n+1} & \cdots &
\phi_{n+\ell-1}(0)/\chi_{n+\ell-1}\cr
{d\over dz}\phi_n(0)/\chi_n & {d\over dz}\phi_{n+1}(0)/\chi_{n+1}&
\cdots & {d\over dz}\phi_{n+\ell-1}(0)/\chi_{n+\ell-1}\cr
\vdots & \vdots &  & \vdots\cr
 {d^{\ell-1}\over dz^{\ell-1}}\phi_n(0)/\chi_n &
{d^{\ell-1}\over dz^{\ell-1}}\phi_{n+1}(0)/\chi_{n+1}& \cdots &
{d^{\ell-1}\over dz^{\ell-1}}\phi_{n+\ell-1}(0)/\chi_{n+\ell-1}
\end{matrix}
\right|
\ee
all the terms with the derivatives of $\mathcal{D}(z)$ drop out, and
we have
\be
F_n=\mathcal{D}(0)^{-\ell}
\left|
\begin{matrix}
\rho_n(0)& \rho_{n+1}(0) & \cdots &
\rho_{n+\ell-1}(0)\cr
{d\over dz}\rho_n(0) & {d\over dz}\rho_{n+1}(0)&
\cdots & {d\over dz}\rho_{n+\ell-1}(0)\cr
\vdots & \vdots &  & \vdots\cr
 {d^{\ell-1}\over dz^{\ell-1}}\rho_n(0) &
{d^{\ell-1}\over dz^{\ell-1}}\rho_{n+1}(0)& \cdots &
{d^{\ell-1}\over dz^{\ell-1}}\rho_{n+\ell-1}(0)
\end{matrix}
\right|.
\ee
It is a crucial fact
that the size $\ell$ of this determinant is less than the number
of terms, $\ell+p$, in the expansion of $\phi_n(0)/\chi_n$ of the same
largest order $O(n^{-2\Re\wt\bt_1-1})$ (see (\ref{asphi}) with $\bt_j$
replaced by $\wt\bt_j$).

As $|\Re\wt\bt_j-\Re\wt\bt_k|<1$,
and $\al_j\pm\wt\bt_j\neq -1,-2,\dots$, $j,k=1,\dots, m'$,
we obtain for the $p$'th derivative of $\rho(z)$ from (\ref{phiz}), (\ref{A}), and
(\ref{mu}) with $\bt$ replaced by $\wt\bt$
\begin{multline}\la{rhoj}
{d^s\over dz^s}\rho_{n+i}(0)=s!\sum_{k=0}^m {A^{(n+i)}_{k,12}\over z_k^{s+1}}
+O\left(\left[\de+{1\over n}\right]n^{-2\Re\wt\bt_1-1}\right)\\
=s!\sum_{j=1}^{\ell+p}d_j z_j^{n+i-s}+O\left(n^{-2\Re\wt\bt_{\ell+p+1}-1}\right)
+O\left(\left[\de+{1\over n}\right]n^{-2\Re\wt\bt_1-1}\right),
\end{multline}
where
\be\la{dj}
d_j= n^{-2\wt\bt_j-1}
{\Ga(1+\al_j+\wt\bt_j)\over\Ga(\al_j-\wt\bt_j)}
e^{V_0}{b_+(z_j)\over b_-(z_j)}\exp\left\{-{i\pi}\left(\sum_{k=0}^{j-1}\al_k-
\sum_{k=j+1}^m\al_k\right)\right\}
\prod_{k\neq j}\left({z_j\over z_k}\right)^{\al_k}|z_j-z_k|^{2\wt\bt_k}.
\ee
Substituting these expressions into the determinant $F_n$, we obtain
\begin{multline}
F_n=\mathcal{D}(0)^{-\ell}
\prod_{s=0}^{\ell-1}s!
\sum_{1\le i_1\neq i_2\neq\cdots\neq i_\ell\le \ell+p}
d_{i_1}d_{i_2}\cdots d_{i_\ell}
z_{i_1}^n\cdots z_{i_\ell}^{n-\ell+1}
\prod_{1\le j<k\le\ell}(z_{i_k}-z_{i_j})(1+o(1))\\
=\mathcal{D}(0)^{-\ell}
\prod_{s=0}^{\ell-1}s!
\sum_{1\le i_1<i_2<\cdots<i_\ell\le \ell+p}
d_{i_1}d_{i_2}\cdots d_{i_\ell}
(z_{i_1}\cdots z_{i_\ell})^{n}
\prod_{1\le j<k\le\ell}|z_{i_j}-z_{i_k}|^2(1+o(1)),
\end{multline}
as $z_j^{-1}=\overline{z_j}$.

Therefore, by (\ref{213}),
\begin{multline}\la{610}
D_n(z^\ell \wt f(z))={(-1)^{n\ell} F_n \over \prod_{s=0}^{\ell-1}s!}
D_n(\wt f(z))\\
=(-1)^{n\ell}
\mathcal{D}(0)^{-\ell}\sum_{1\le i_1<i_2<\cdots<i_\ell\le \ell+p}
d_{i_1}d_{i_2}\cdots d_{i_\ell}
(z_{i_1}\cdots z_{i_\ell})^{n}
\prod_{1\le j<k\le\ell}|z_{i_j}-z_{i_k}|^2 D_n(\wt f(z))
(1+o(1)).
\end{multline}
We now use Theorem \ref{asTop} for $D_n(\wt f(z))$.
Noting, in particular, that $G(1+z)=\Ga(z) G(z)$ and $\mathcal{D}(0)=e^{V_0}$,
we obtain after a straightforward calculation that
\[
D_n(z^\ell \wt f(z))= (-1)^{n\ell}
\sum_{1\le i_1<i_2<\cdots<i_\ell\le \ell+p}
(z_{i_1}\cdots z_{i_\ell})^{n}
\mathcal{R}(\dots,\wt\bt_{i_1}+1,\wt\bt_{i_2}+1,\dots, \wt\bt_{i_\ell}+1,\dots),
\]
where $\mathcal{R}$ is the r.h.s. of (\ref{asD}) where all $\bt_j$ are replaced
with $\wt\bt_j$ with the exception of  $\bt_j$, $j=i_1,\dots,i_\ell$ which are replaced
as indicated in the argument of $\mathcal{R}$.
Note once again that each sum is over indices in the range
$1,\dots, \ell+p$ and we use a special numbering
of indices (cf. (\ref{numbering})).
Finally, recalling (\ref{ffin}), we obtain
\be
D_n(f(z))=
\left(\prod_{j=0}^{m}z_j^{L_j}\right)^n
\sum_{1\le i_1<i_2<\cdots<i_\ell\le \ell+p}
\mathcal{R}(\dots,\wt\bt_{i_1}+1,\wt\bt_{i_2}+1,\dots, \wt\bt_{i_\ell}+1,\dots),
\ee
which is the statement of Theorem \ref{BT} for $V(z)$ analytic in a neighborhood of the
unit circle.

\subsection{Extension to smooth $V(z)$}\la{secBTb}
If $V(z)$ is just sufficiently smooth, in particular $C^\infty$,
 on the unit circle $C$ so that (\ref{Vcond})
holds for $s$ from zero up to and including
some $s\ge 0$,
we can approximate $V(z)$ by trigonometric polynomials
$V^{(n)}(z)=\sum_{k=-p(n)}^{p(n)} V_k z^k$, $z\in C$.
First, consider the case when $|||\bt|||=\max_{j,k}|\Re\bt_j-\Re\bt_k|=2\max_j |\Re\bt_j-\om|<1$,
where $\om$ is defined by (\ref{omega}). (The indices $j,k=0$ are omitted if $\al_0=\bt_0=0$.)
We set
\be\la{pnu}
p=[n^{1-\nu}],\qquad \nu=2\max_j |\Re\bt_j-\om|+\ep_1=|||\bt|||+\ep_1,
\ee
where $\ep_1>0$ is chosen sufficiently small so that $\nu<1$
(square brackets denote the integer part).

First, we need to extend the RH analysis of the previous sections to symbols which
depend on $n$, namely to the case when $V$ in $f$ is replaced by $V^{(n)}$. (We will denote
such $f$ by $f(z,V^{(n)})$, and the original one, by $f(z,V)$.)
We need to have a suitable estimate for the behaviour of the error term in asymptotics with $n$.
For a fixed $f$, our analysis depended, in particular, on the fact that $f(z)^{-1}z^{-n}$ is of order
$e^{-\ep n}$, $\ep>0$, for $z\in \Si^\mathrm{out}$ (see Section \ref{RRHP}), and similarly,
$f(z)^{-1}z^n=O(e^{-\ep n})$ for  $z\in \Si^{''\mathrm{out}}$.
Here the contours $\Si^\mathrm{out}$, $\Si^{''\mathrm{out}}$ are outside a {\it fixed} neighborhood of
the unit circle (outside and inside $C$, respectively).
If $V$ is replaced by $V^{(n)}$, let us define the curve $\Si$ outside $\cup_{j=0}^m U_j$,
$U_j\equiv U_{z_j}$, by
\be\la{si1}
z=\left(1+\ga\frac{\ln p}{p}\right)e^{i\th},\qquad \ga>0,
\ee
and $\Si^{''}$ outside $\cup_{j=0}^m U_j$ by
\be\la{si2}
z=\left(1-\ga\frac{\ln p}{p}\right)e^{i\th}.
\ee
Inside all $U_j$, the curves still go to $z_j$ as discussed in Section \ref{RHparam}.
Let the radius of all $U_j$ be $2\ga\ln p/p$.
We now fix the value of $\ga$ as follows.
Using the condition (\ref{Vcond}) we can write (here and below $c$ stands for various
positive constants independent of $n$)
\begin{multline}
|V^{(n)}(z)|-|V_0|
\le\sum_{k=-p,\;k\neq 0}^p |k^s V_k| \frac{|z|^k}{|k|^s}
< c\left(\sum_{k=-p,\;k\neq 0}^p |k^s V_k|^2\right)^{1/2}
\left(\sum_{k=1}^p\frac{(1\pm 3\ga\ln p/p)^{\pm2k}}{k^{2s}}\right)^{1/2}\\
<c\left(\sum_{k=1}^p\frac{(1\pm 3\ga\ln k/k)^{\pm2k}}{k^{2s}}\right)^{1/2}<
c\left(\sum_{k=1}^p{1\over k^{2(s-3\ga)}}\left[1+O\left({\ln^2 k\over k}\right)\right]\right)^{1/2},
\end{multline}
where $z\in\Si^\mathrm{out}$, $z\in\partial U_j\cap\{|z|>1\}$
(with ``$+$'' sign in ``$\pm$''), and $z\in \Si^{''\mathrm{out}}$,
$z\in\partial U_j\cap\{|z|<1\}$
(with ``$-$'' sign). We now set
\be\la{gas}
3\ga=s-(1+\ep_2)/2,\qquad \ep_2>0,
\ee
and then
\be\la{Vbound}
|V^{(n)}(z)|<c,\qquad |b_+(z,V^{(n)})|<c,\qquad |b_-(z,V^{(n)})|<c,\qquad
\mbox{for all } n
\ee
uniformly on $\Si^\mathrm{out}$, $\Si^{''\mathrm{out}}$, $\partial U_j$'s, and
in fact in the whole annulus $1-3\ga\frac{\ln p}{p}<|z|<1+3\ga\frac{\ln p}{p}$.

It is easy to adapt the considerations of the previous sections to the present case, and
we again obtain the expansion (\ref{Deas}) for the jump matrix of $R$ on $\partial U_j$.
Note that now $|z-z_j|=2\ga(1-\nu)\ln n/ n^{1-\nu}$ and
$|\ze(z)|=2\ga(1-\nu)n^\nu\ln n(1+o(1))$ as $n\to\infty$
for $z\in\partial U_j$, and therefore using (\ref{az}), (\ref{Fj}), (\ref{F0}), (\ref{abszzj}),
(\ref{Dl1}) and the definition of $\nu$ in (\ref{pnu}), we obtain, in particular,
\be
n^{\om\si_3}\De_1(z)n^{-\om\si_3}=O\left({1\over n^{\ep_1}\ln n}\right),\qquad
z\in\cup_{j=0}^m\partial U_j.
\ee
Furthermore, as follows from (\ref{si1}), (\ref{si2}), (\ref{Vbound}),
and (\ref{Rs1},\ref{Rs2}),
the jump matrix on $\Si^\mathrm{out}$ and $\Si^{''\mathrm{out}}$ is now the identity plus a function
uniformly bounded in absolute value by
\be\la{bbb}
c\left({n^{1-\nu}\over\ln n}\right)^{2\max_j|\Re\bt_j|}
\left(1\pm\ga(1-\nu)\frac{\ln n}{n^{1-\nu}}\right)^{\mp n}<
c\exp\left\{-{\ga\over2}(1-\nu)n^\nu\ln n\right\}n^{2(1-\nu)\max_j|\Re\bt_j|},
\ee
where the upper sign corresponds to  $\Si^\mathrm{out}$, and the lower, to $\Si^{''\mathrm{out}}$.

The RH problem for $R(z)$ (see Section \ref{RRHP})
is therefore solvable, and we obtain $R(z)$ as a series where the first term $R_1$ is the same
as before. For the error term there holds the same estimate
for $z$ outside a fixed neighborhood of the unit circle, e.g., at $z=0$.

This, in particular, implies that the formulae (\ref{rhoj},\ref{dj}) hold for $\wt f(z,V^{(n)})$
(in $\wt f$, we substitute $\wt\bt_j$ for $\bt_j$: note that the condition $0<\nu<1$ is satisfied).

We will now show that replacing $V^{(n)}$ with $V$ in the symbol of the determinant
$D_n(\wt f(z,V^{(n)}))$
results (under a condition on $s$) in a small error only, so that (\ref{610}) still holds with
$V$ used in $D_n(\wt f(z,V))$, and  $V^{(n)}$ in $d_j$'s and in  $D_n(z^\ell\wt f(z,V^{(n)}))$.
Then, proceeding as before, we obtain the statement of the theorem for $D_n(z^\ell\wt f(z,V^{(n)}))$
as, by (\ref{Vcond}), uniformly on $C$,
\be\la{bb}
b_\pm(z, V^{(n)})=b_\pm(z, V)\left[1+O\left({1\over n^{(1-\nu)s}}\right)\right],\qquad |z|=1,
\ee
in (\ref{dj}).
Recall a standard representation for a Toeplitz determinant with (any) symbol $f(z)$:
\be\label{multintrepr}
D_{n}(f) = \frac{1}{(2\pi)^{n}n!}
\int_0^{2\pi}\cdots\int_0^{2\pi}
\prod_{1\leq j < k \leq n}|e^{i\phi_{j}} - e^{i\phi_{k}}|^{2}
\prod_{j=1}^n f(e^{i\phi_j})d\phi_{j}.
\ee
We have from this formula, (\ref{bb}), and Theorem \ref{asTop} for $D_n(|\wt f(z,V)|)$
and $D_n(\wt f(z,V))$, if $s(1-\nu)>1$, 
\begin{multline}\la{estD}
\left|D_n(\wt f(z,V))-D_n(\wt f(z,V^{(n)})\right|<\\
\frac{1}{(2\pi)^{n}n!}\int_0^{2\pi}\cdots\int_0^{2\pi}
\prod_{1\leq j < k \leq n}|e^{i\phi_{j}} - e^{i\phi_{k}}|^{2}
\prod_{j=1}^n |\wt f(e^{i\phi_j},V)|d\phi_{j}\times
\left(\left|1+c/n^{(1-\nu)s}\right|^n-1\right)\\
<c e^{\Re V_0 n} n^{\sum_{j=1}^m((\Re\al_j)^2+(\Im\wt\bt_j)^2)}
(e^{c/n^{(1-\nu)s-1}}-1)\\
<c \left|e^{V_0 n} n^{\sum_{j=1}^m (\al_j^2-\wt\bt_j^2)}\right|
n^{\sum_{j=0}^m((\Im\al_j)^2+(\Re\wt\bt_j)^2)}
{1\over n^{(1-\nu)s-1}}\\
<c\left|D_n(\wt f(z,V))\right|n^{-((1-\nu)s-1-\sum_{j=0}^m((\Im\al_j)^2+(\Re\wt\bt_j)^2))}.
\end{multline}
Therefore,
\be\la{estD2}
D_n(\wt f(z,V^{(n)}))=D_n(\wt f(z,V))\left(1+\frac{D_n(\wt f(z,V^{(n)}))-D_n(\wt f(z,V))}
{D_n(\wt f(z,V))}\right)=D_n(\wt f(z,V))(1+o(1)),
\ee
if
\be\la{s-est}
s>\frac{1+\sum_{j=0}^m((\Im\al_j)^2+(\Re\wt\bt_j)^2)}{1-\nu}.
\ee
Under the condition (\ref{s-est}) and the ones under which Theorem \ref{asTop} holds,
$\al_j\pm\wt\bt_j\neq-1,-2,\dots$, and e.g. $C^\infty$ for $V$
(see Remark \ref{smoothness1}),
we then obtain the statement of the theorem for
$D_n(z^\ell\wt f(z,V^{(n)}))$ as mentioned above. The theorem (with Remark \ref{smoothness2})
for  $D_n(z^\ell\wt f(z,V))$, and hence for $D_n(f(z,V))$, immediately follows from
an analysis similar to (\ref{estD},\ref{estD2}) applied to
\[
D_n(z^\ell\wt f(z,V))=D_n(z^\ell\wt f(z,V^{(n)}))
\left(1-\frac{D_n(z^\ell\wt f(z,V^{(n)}))-D_n(z^\ell\wt f(z,V))}
{D_n(z^\ell\wt f(z,V^{(n)}))}\right).
\]
The ratio in the brackets is $o(1)$ under the condition (\ref{s-est})
in which $\wt\bt_j$ are replaced by $\bt^{(r)}_j$ (and the condition under which Theorem \ref{asTop} holds).
As $\ep_1$ can be arbitrary close to zero,
this condition together with (\ref{s-est})
(note that these conditions are consistent with (\ref{gas}) and the requirement that $\ga>0$)
and (\ref{s-main0}) for Theorem \ref{asTop} yield the estimate (\ref{s-main}).

\section{Hankel determinants. Proof of Theorem \ref{asHankel}}
\la{Hankel}
Consider the Hankel determinant with symbol $w(x)$ on $[-1,1]$ given by (\ref{wFH}).
In this section we will find its asymptotics using the relation to
a Toeplitz determinant established in Theorem \ref{HT}.
Let $x=\cos\th$, $z=e^{i\th}$, $0\le\th\le\pi$. In particular,
\[
\lb_j=\cos\th_j,\quad
z_j=e^{i\th_j},\quad j=0,1,\dots,r+1,\quad
0=\th_0<\th_1<\cdots<\th_{r+1}=\pi.
\]
First, we find an even function $f$ of the angle $\th$ related to
$w(x)$ by (\ref{wf}).
The Toeplitz determinant $D_{2n}(f(z))$ with this symbol enters the connection formula
(\ref{HTdet}).
Denote
\be
z'_j=e^{(2\pi -\th_j)i},\qquad j=0,\dots,r+1.
\ee
Then, recalling (\ref{za}), note that
\be
|x-\lb_j|^{2\al_j}=|\cos\th-\cos\th_j|^{2\al_j}=\left|2\sin{\th-\th_j\over 2}
\sin{\th+\th_j\over 2}\right|^{2\al_j}=2^{-2\al_j}|z-z_j|^{2\al_j}
|z-z'_j|^{2\al_j},
\ee
and
\be
|\sin\th|=2^{-1}|z-z_0||z-z_{r+1}|.
\ee
We see that $f(z)$ will have $m+1=2r+2$ singularities at the points
$z_0=1$, $z_{r+1}=-1$, $z_j$, $z'_j$, $j=1,\dots,r$.

Observe that
\be\la{omm}
\prod_{j=1}^r \om_j(x)=
e^{-i\pi\sum_{j=1}^r\bt_j}
\prod_{j=1}^r z_j^{-\bt_j}{z'}_j^{\bt_j}
\prod_{j=1}^r
g_{z_j,-\bt_j}(z)z_j^{\bt_j} g_{z'_j,\bt_j}(z){z'}_j^{-\bt_j}.
\ee

Note that $\bt_0=\bt_{r+1}=0$ and we have the jumps with $-\bt_j$ at $z_j$ and
$+\bt_j$ at $z'_j$. In particular, the sum over all $\bt$'s is zero as noted
in the introduction.
Note that as $\th_j=\pi/2-\arcsin\lb_j$, we have in (\ref{omm})
\be
e^{-i\pi\sum_{j=1}^r\bt_j}
\prod_{j=1}^r z_j^{-\bt_j}{z'}_j^{\bt_j}=
\exp\left(2i\sum_{j=1}^r\bt_j\arcsin\lb_j\right).
\ee
Collecting the above observations and denoting
\be
A=\sum_{j=0}^{r+1}\al_j,
\ee
we have by (\ref{wf}) (where we single out a multiplicative
constant for convenience)
\be
f(z)=w(x)|\sin\th|=C \wt f(z),\qquad
C=2^{-2A-1}\exp\left(2i\sum_{j=1}^r\bt_j\arcsin\lb_j\right),
\ee
where
\be
\wt f(e^{i\theta})=e^{V(e^{i\th})}
|z-1|^{4\al_0+1}|z+1|^{4\al_{r+1}+1}
\prod_{j=1}^r  |z-z_j|^{2\al_j}|z-{z'}_j|^{2\al_j}
g_{z_j,-\bt_j}(z)z_j^{\bt_j} g_{z'_j,\bt_j}(z){z'}_j^{-\bt_j}.
\ee
Here $V(e^{i\th})=U(\cos\th)$. Thus $\wt f(z)$ is the symbol of type (\ref{fFH})
with $\Re\bt_j\in(-1/2,1/2]$. Therefore, if  $\Re\bt_j\in(-1/2,1/2)$,
$j=1,\dots,r$, we can apply
Theorem \ref{asTop} to $D_{2n}(\wt f(z))$, and obtain
\begin{multline}\la{asTop2}
D_{2n}(f(z))=C^{2n}D_{2n}(\wt f(z))=
C^{2n} \exp\left(2nV_0+\sum_{k=1}^\infty k V_k^2\right)
b_+(1)^{-4\al_0-1}b_+(-1)^{-4\al_{r+1}-1}\\
\times
\prod_{j=1}^r
b_+(z_j)^{-2(\al_j+\bt_j)}b_-(z_j)^{-2(\al_j-\bt_j)}
\times
(2n)^{2\sum_{j=1}^r(\al_j^2-\bt_j^2)
+(2\al_0+1/2)^2+(2\al_{r+1}+1/2)^2}
\mathcal{P}(z)\\
\times
\prod_{j=1}^r\frac{G(1+\al_j+\bt_j)^2 G(1+\al_j-\bt_j)^2}{G(1+2\al_j)^2}\times
\frac{G(1+2\al_0+1/2)^2}{G(1+4\al_0+1)}
\frac{G(1+2\al_{r+1}+1/2)^2}{G(1+4\al_{r+1}+1)}
\left(1+o(1)\right),\\
\mathcal{P}(z)=
\prod_{0\le j<k\le m}
|z_j-z_k|^{2(\wt\bt_j\wt\bt_k-\wt\al_j\wt\al_k)}\left({z_k\over z_j e^{i\pi}}
\right)^{\wt\al_j\wt\bt_k-\wt\al_k\wt\bt_j},\\
\Re\al_j>-{1\over 2},\qquad \Re\bt_j\in\left(-{1\over 2},{1\over2}\right),
\qquad j=0,1,\dots,r+1,
\end{multline}
where we used the fact that by the symmetry of $V(z)$, $V_k=V_{-k}$, and hence
$b_+(z_j)=b_-(z'_j)$, $b_-(z_j)=b_+(z'_j)$, $b_+(\pm1)=b_-(\pm1)$.
In the above expression for $\mathcal{P}$,
$m=2r+1$, and the points are numbered as in Theorem \ref{asTop}, namely,
$\wt z_0=z_0=1$, $\wt\al_0=2\al_0+1/2$, $\wt\bt_0=\bt_0=0$;
$\wt z_j=z_j$, $\wt\al_j=\al_j$, $\wt\bt_j=-\bt_j$, $j=1,\dots,r$;
$\wt z_{r+1}=z_{r+1}=-1$, $\wt\al_{r+1}=2\al_{r+1}+1/2$, $\wt\bt_{r+1}=\bt_{r+1}=0$;
$\wt z_j=e^{2\pi i}z_{m+1-j}^{-1}$, $\wt\al_j=\al_{m+1-j}$, $\wt\bt_j=\bt_{m+1-j}$,
$j=r+2,\dots,m$.
Expression for $\mathcal{P}$ can be written in terms of $\lb_j$. Namely,
it is not difficult to obtain by induction that
\begin{multline}
\mathcal{P}=2^{-2(\al_0+\al_{r+1}+1/4)}
\prod_{0\le j<k\le r+1}
\left|2\sin{\th_j-\th_k\over 2}\right|^{-4(\al_j\al_k-\bt_j\bt_k)}
\left|2\sin{\th_j+\th_k\over 2}\right|^{-4(\al_j\al_k+\bt_j\bt_k)}\\
\times
\prod_{j=1}^r |2\sin\th_j|^{-2(\al_j^2+\bt_j^2+\al_j)}\times
e^{2i(2A+1)\sum_{j=1}^r \bt_j\arcsin\lb_j}
e^{2\pi i\sum_{0\le j<k\le r+1}(\al_j\bt_k-\al_k\bt_j)}.
\end{multline}

Assume first that $V(z)$ is analytic.
To use (\ref{HTdet}), we need to calculate the asymptotics of
the product $\Phi_{2n}(1)\Phi_{2n}(-1)$.
In order to do this, consider $Y^{(n)}(z)$ as $z\to z_j$ in such a way that $z\in z(I)$,
where $z(I)$ is the pre-image in the $z$-plane of the sector $I$ of the $\ze$-plane
(see Figure \ref{fig2} and Section \ref{RHparam}).
Tracing back the transformations of the RHP, we obtain
\be\la{Ymid}
Y^{(n)}(z)=T(z)=S(z)
\bm
1&0\cr
f(z)^{-1}z^n& 1
\em
=
(I+R_1^{(r)}(z))P_{z_j}(z)
\bm
1&0\cr
f(z)^{-1}z^n& 1
\em,\qquad z\in z(I),
\ee
where the parametrix $P_{z_j}(z)$ at $z_j$ is (see Section \ref{RHparam}):
\be
P_{z_j}(z)=E(z)\Psi_j(\ze)F_j(z)^{-\si_3}z^{n\si_3/2},
\ee
with $E(z)$ given by (\ref{E1}).
Substituting all the expressions into (\ref{Ymid}), we obtain
\begin{multline}\la{Ymid2}
Y^{(n)}(z)=(I+R_1^{(r)}(z))\mathcal{D}(z)^{\si_3}
\bm
0&1\cr
-1&0
\em
\left(\ze^{\bt_j}F_j(z)z_j^{-n/2}\right)^{\si_3}\\
\times
\bm
e^{-i\pi(2\bt_j+\al_j)} &0\cr 0 & e^{i\pi(\bt_j+2\al_j)}
\em
\Psi_j(\ze)
\bm
F_j(z)^{-1} & 0\cr
F_j(z) f(z)^{-1} & F_j(z)
\em
z^{n\si_3/2}.
\end{multline}

Note that the expansion of $F_j^{-1}(z)$ as $z\to z_j$ is given by (\ref{etab}).
Using it we further obtain
\be
F_j(z)f(z)^{-1}=
\eta_j^{-1} e^{i\pi(\bt_j-\al_j/2)}z_j^{\al_j}u^{-\al_j}
(1+O(u)),\qquad u=z-z_j,\qquad \ze\in I.
\ee
Thus,
\be
\bm
F_j(z)^{-1} & 0\cr
F_j(z) f(z)^{-1} & F_j(z)
\em
=
\bm
e^{i\pi\al_j}& 0\cr
e^{i\pi(\bt_j-\al_j)}& e^{-i\pi\al_j}
\em
(e^{i\pi\al_j/2}z_j^{\al_j} u^{-\al_j})^{\si_3}\eta_j^{-\si_3}(1+O(u)).
\ee
To estimate $\Psi(\ze)$ for $\ze\to 0$ (i.e., $z\to z_j$), assume first that
all $\al_j\neq 0$. Substituting (\ref{psi01}) into (\ref{PsiConfl1}),
dropping the terms of order $u^{2\al_j}$ in the second column
(we will denote thus modified $Y(z)$ by $\wt Y(z)$)
we obtain the following limit for the combination needed in (\ref{Ymid2})
(here tilde over the limit sign means that we have to
drop $u^{2\al_j}$ terms before taking the limit):
\begin{multline}
\wt\lim_{u\to 0}
\bm
e^{-i\pi(2\bt_j+\al_j)} &0\cr 0 & e^{i\pi(\bt_j+2\al_j)}
\em
\Psi_j(\ze)
\bm
e^{i\pi\al_j}& 0\cr
e^{i\pi(\bt_j-\al_j)}& e^{-i\pi\al_j}
\em
(e^{i\pi\al_j/2}z_j^{\al_j} u^{-\al_j})^{\si_3}\\
=
e^{i\pi(\al_j/2-\bt_j)\si_3} M n^{\al_j\si_3},
\end{multline}
where
\be
M=
\bm
\left[ e^{i\pi\al_j}{1\over\Ga(\bt_j-\al_j)}-
 e^{-i\pi\al_j}{\Ga(1+\al_j-\bt_j)\over \Ga(\al_j+\bt_j)\Ga(1-\al_j-\bt_j)}
\right]\Ga(-2\al_j)e^{i\pi\bt_j}&
-{\Ga(2\al_j)\over \Ga(\al_j+\bt_j)}\cr
\left[ e^{-i\pi\al_j}{1\over\Ga(-\bt_j-\al_j)}-
 e^{i\pi\al_j}{\Ga(1+\al_j+\bt_j)\over \Ga(\al_j-\bt_j)\Ga(1-\al_j+\bt_j)}
\right]\Ga(-2\al_j)e^{i\pi\bt_j}&
{\Ga(2\al_j)\over \Ga(\al_j-\bt_j)}
\em.
\ee
This expression can be simplified. Namely, the $11$ matrix element
\begin{multline}
M_{11}=
e^{i\pi(\bt_j-\al_j)}{\Ga(-2\al_j)\over \Ga(\bt_j-\al_j)}
\left(e^{2\pi i\al_j}-{\sin\pi(\bt_j+\al_j)\over \sin\pi(\bt_j-\al_j)}
\right)\\
={\Ga(-2\al_j)\over \Ga(\bt_j-\al_j)}{\sin(-2\pi\al_j)\over \sin\pi(\bt_j-\al_j)}
={\Ga(1+\al_j-\bt_j)\over \Ga(1+2\al_j)}.
\end{multline}
Similarly,
\be
M_{21}= {\Ga(1+\al_j+\bt_j)\over \Ga(1+2\al_j)}.
\ee
Thus
\be\la{M}
M=
\bm
{\Ga(1+\al_j-\bt_j)\over \Ga(1+2\al_j)}&
 -{\Ga(2\al_j)\over \Ga(\al_j+\bt_j)}\cr
{\Ga(1+\al_j+\bt_j)\over \Ga(1+2\al_j)}&
 {\Ga(2\al_j)\over \Ga(\al_j-\bt_j)}
\em.
\ee
Substituting the just found limit and (\ref{mub}) for
$(\mathcal{D}(z)/(\ze^{\bt_j}F_j(z))^2$
into (\ref{Ymid2}), we obtain
\be\la{Yend}
\wt Y^{(n)}(z_j)=(I+r^{(n)}_j)L^{(n)}_j,\qquad
L^{(n)}_j=\bm
M_{21}\mu_j\eta_j^{-1} n^{\al_j-\bt_j} z_j^{n} &
M_{22}\mu_j\eta_j n^{-\al_j-\bt_j}\cr
-M_{11}\mu^{-1}_j \eta_j^{-1} n^{\al_j+\bt_j} &
-M_{12}\mu^{-1}_j \eta_j n^{-\al_j+\bt_j} z_j^{-n}
\em,
\ee
where $r_j=R_1^{(r)}(z_j)$, and $\eta_j$, $\mu_j$ are given
by (\ref{eta},\ref{mu}).

Note that the matrix $L^{(n)}_j$ has the structure
\be\la{LhatL}
L^{(n)}_j=n^{-\bt_j\si_3}\widehat L^{(n)}_j n^{\al_j\si_3},
\ee
where $\widehat L$ depends on $n$ only via the oscillatory terms $z_j^n$.

From (\ref{RHM}) and (\ref{Yend}) at $z_j=1$,
\be
\Phi_{2n}(1)=Y^{(2n)}_{11}(1)=L^{(2n)}_{0,11}(1+O(n^{-2\max_k\bt_k-1})).
\ee
From (\ref{Yend},\ref{M},\ref{mu},\ref{eta}), 
using the doubling formula for the $\Ga$-function
\be\la{Gad}
\frac{\Ga(1+x)}{\Ga(1+2x)}=\frac{\sqrt{\pi}}{2^{2x}\Ga(x+1/2)},
\ee
we obtain the following main term of $\Phi_{2n}(1)$:
\begin{multline}
L^{(2n)}_{0,11}=M_{0,21}\mu_0\eta_0^{-1}(2n)^{2\al_0+1/2}=\\
{\sqrt{\pi}e^{(V_0-V(1))/2+i\sum_{j=1}^r(\pi-\th_j)\bt_j}
\over 2^{4\al_0+1}\Ga(1+2\al_0)}
\prod_{j=1}^r \left|2\sin{\th_j\over 2}\right|^{-2\al_j}
2^{2(\al_0-\al_{r+1})} n^{2\al_0+1/2}.
\end{multline}
Similarly, we obtain
\begin{multline}
\Phi_{2n}(-1)=L^{(2n)}_{r+1,11}(1+O(n^{-2\max_k\bt_k-1})),\\
L^{(2n)}_{r+1,11}=
{\sqrt{\pi}e^{(V_0-V(-1))/2-i\sum_{j=1}^r\th_j\bt_j}
\over 2^{4\al_{r+1}+1}\Ga(1+2\al_{r+1})}
\prod_{j=1}^r \left|2\cos{\th_j\over 2}\right|^{-2\al_j}
2^{-2(\al_0-\al_{r+1})} n^{2\al_{r+1}+1/2}.
\end{multline}
Therefore
\begin{multline}\la{PP}
\Phi_{2n}(1)\Phi_{2n}(-1)=
{\pi e^{V_0-(V(1)+V(-1))/2+2i\sum_{j=1}^r\bt_j\arcsin\lb_j}
\over 2^{4(\al_0+\al_{r+1})+2}\Ga(1+2\al_0)\Ga(1+2\al_{r+1})}\\
\times
\prod_{j=1}^r
|2\sin\th_j|^{-2\al_j} n^{2(\al_0+\al_{r+1})+1}(1+O(n^{-2\max_k\bt_k-1})).
\end{multline}

Substituting (\ref{asTop2}) and (\ref{PP}) into (\ref{HTdet}) we obtain
(\ref{asDH}) squared. We use the following
observations in the process:

\begin{itemize}
\item
Since $V_k=V_{-k}$,
\[
b_+(\pm1)=e^{(V(\pm1)-V_0)/2}.
\]

\item
The following elementary identity holds
\begin{multline}
\prod_{0\le j<k\le r+1}
\left|2\sin{\th_j-\th_k\over 2}\right|^{-(\al_j\al_k-\bt_j\bt_k)}
\left|2\sin{\th_j+\th_k\over 2}\right|^{-(\al_j\al_k+\bt_j\bt_k)}\\
=
2^{-\sum_{0\le j<k\le r+1}\al_j\al_k} \prod_{0\le j<k\le r+1}
|\lb_j-\lb_k|^{-(\al_j\al_k+\bt_j\bt_k)}
\left|\lb_j\lb_k-1+\sqrt{(1-\lb_j^2)(1-\lb_k^2)}\right|^{\bt_j\bt_k}
\end{multline}

\item Applying the doubling formula (\ref{Gdouble}) we easily obtain that
\be
\frac{G(1+2\al+1/2)^2}{G(1+4\al+1)}\Ga(1+2\al)=
2^{-8\al^2-2\al}\pi^{2\al+1}{G(1/2)^2\over G(1+2\al)^2}.
\ee

\end{itemize}

If $V(z)\equiv V_r(z)$ is real-valued for $z\in C$, and $\al_j\in\bbr$,
$i\bt_j\in\bbr$, $j=0,\dots,m$, then the weight $f(z)$, $z\in C$, is positive,
and therefore $D_n(w)$ is positive. Then (\ref{asDH}) represents the correct branch
of the square root. Since $D_n(w)$ is continuous in $\al_j$, $\bt_j$, and the parameter
$h$ in $V(z)=V_r(z)+(V(z)-V_r(z))h$, $h\in [0,1]$, and the error term is uniform
in these parameters (see Section \ref{RRHP}), the formula (\ref{asDH}) has the correct
sign in general. This finishes the proof for analytic $V(z)$. The extension to
smooth $V(z)$ is carried out similarly to the argument in the previous section
by using the standard multiple-integral representation of a Hankel determinant.

\section*{Acknowledgements}
P. Deift was supported in part by NSF grants \# DMS 0500923 and \#
DMS 1001886. A. Its was supported in part by NSF grant \#DMS 0701768
and EPSRC grant \#EP/F014198/1. I. Krasovsky was supported in part
by EPSRC grants \#EP/E022928/1 and \#EP/F014198/1.




\begin{thebibliography}{99}


\bi{BDJ} J. Baik, P. Deift, K. Johansson,
On the distribution of the length of the longest increasing subsequence
of random permutations.  J. Amer. Math. Soc.  {\bf 12}  (1999), 1119--1178

\bi{BR} J. Baik and E. M. Rains. Algebraic aspects of increasing
subsequences.
Duke Math. J. {\bf 109}  (2001),  1--65

\bi{Barnes} E. W. Barnes. The theory of the $G$-function. Quart. J. Pure and Appl. Math.
{\bf 31} (1900), 264--313

\bibitem{B} E. Basor. Asymptotic formulas for Toeplitz determinants. Trans.
Amer. Math. Soc. {\bf 239} (1978), 33--65

\bibitem{B2} E. Basor. A localization theorem for Toeplitz determinants.
Indiana Univ. Math. J.  28  (1979), no. 6, 975--983

\bibitem{BE1}
E. L. Basor and T. Ehrhardt. Some identities for determinants
of structured matrices. Special issue on structured and infinite systems
of linear equations.  Linear Algebra Appl.  {\bf 343/344}  (2002), 5--19.

\bibitem{BEsym} E. L. Basor and T. Ehrhardt.
Asymptotic formulas for the determinants of symmetric Toeplitz plus
Hankel matrices.  Toeplitz matrices and singular integral equations
(Pobershau, 2001), 61--90, Oper. Theory Adv. Appl., 135,
Birkh\"auser, Basel, 2002

\bibitem{BEnonsym} E. L. Basor and T. Ehrhardt.
Asymptotic formulas for determinants of a sum of finite Toeplitz and Hankel
matrices.  Math. Nachr.  {\bf 228}  (2001), 5--45

\bibitem{BE4}
E. L. Basor and T. Ehrhardt.
Determinant computations for some classes of Toeplitz-Hankel matrices
[arXiv:0804.3073]

\bibitem{BT}
E. L. Basor and C. A. Tracy.
The Fisher-Hartwig conjecture and generalizations.
Phys. A  {\bf 177} (1991), 167--173.

\bi{BE} Bateman, Erdelyi. Higher transcendental functions,
New York: McGraw-Hill, 1953-1955



\bi{BS} A. B\"ottcher and B. Silbermann. Toeplitz matrices and determinants
with Fisher-Hartwig symbols. J. Funct. Anal. {\bf 63} (1985), 178--214

\bi{BS2}
A. B\"ottcher and B. Silbermann.
The asymptotic behavior of Toeplitz determinants for generating functions
with zeros of integral orders.  Math. Nachr.  {\bf 102}  (1981), 79--105

\bi{BS3}
A. B\"ottcher, B. Silbermann. Toeplitz operators and determinants
generated by symbols with one Fisher-Hartwig singularity.
Math. Nachr.  {\bf 127}  (1986), 95--123

\bi{BK} H. M. Bui and J. P. Keating. On the mean values of $L$-functions in orthogonal and
symplectic families. Proc. London Math. Soc. (3) {\bf 96} (2008) 335--366;
J. P. Keating. Private communication.

\bi{DIKadd} P. Deift, A. Its, I. Krasovsky. On asymptotics of a Toeplitz determinant
with singularities. In preparation.

\bi{DZ} P. Deift and X. Zhou. A steepest descent method for
oscillatory Riemann-Hilbert problem. Ann. Math. {\bf 137} (1993), 295--368



\bi{Dstrong} P. Deift, T. Kriecherbauer, K. T-R McLaughlin,
S. Venakides, X. Zhou. Strong asymptotics for orthogonal polynomials
with respect to exponential weights. Commun. Pure Appl. Math. {\bf 52} (1999),
1491--1552



\bi{D} P. Deift, Integrable operators.
Differential operators and spectral theory,  69--84,
Amer. Math. Soc. Transl. Ser. 2, 189, Amer. Math. Soc., Providence, RI, 1999.


\bi{ES}
T. Ehrhardt, B. Silbermann. Toeplitz determinants with
one Fisher-Hartwig singularity.  J. Funct. Anal.  {\bf 148}  (1997), 229--256

\bi{Ehr} T. Ehrhardt. A status report on the asymptotic behavior of Toeplitz
determinants with Fisher-Hartwig singularities.
Operator Theory: Adv. Appl. {\bf 124}, 217--241 (2001)


\bi{FA} F. Francini, A. G. Abanov. Asymptotics of Toeplitz Determinants and the Emptiness Formation Probability for the XY Spin Chain. J.Phys. {\bf A38} (2005), 5069-5096


\bi{FH} M. E. Fisher, R. E. Hartwig. Toeplitz determinants: Some applications,
theorems, and conjectures. Advan. Chem. Phys. {\bf 15} (1968), 333--353

\bi{FIK} A. S. Fokas, A. R. Its, A. V. Kitaev. The isomonodromy
approach to matrix models in 2D quantum gravity.
Commun. Math. Phys. {\bf 147} (1992), 395--430

\bibitem{FF} P. J. Forrester, N. E. Frankel. Applications and
  generalizations of Fisher-Hartwig asymptotics.
J. Math. Phys. {\bf 45} (2004), 2003-2028
[arXiv: math-ph/0401011].

\bi{Geronimo} J. S. Geronimo. Szeg\H o's theorem for Hankel determinants.
J. Math. Phys. {\bf 20} (1979), 484--491

\bi{GHK} S. M. Gonek, C. P. Hughes, J. P. Keating. A Hybrid Euler-Hadamard product formula for the Riemann zeta function. Duke Math. J. {\bf 136} (2007), 507--549


\bi{IK} A. Its and I. Krasovsky. Hankel determinant and orthogonal polynomials
for the Gaussian weight with a jump. Contemp. Math. {\bf 458} (2008), 215--247
[arXiv:0706.3192]


\bibitem{J2}  K. Johansson. On random matrices from the compact classical
groups.  Ann. of Math. (2)  {\bf 145}  (1997),  no. 3, 519--545


\bibitem{Keating1} J. P. Keating, Random matrices and number theory, in book:
Applications of random matrices in physics, eds. E. Br\'ezin, V. Kazakov,
D. Serban, P. Wiegmann, A. Zabrodin, NATO Science Series II. Mathematics,
Physics and Chemistry, {\bf 221}, Springer, 2006


\bi{KM} J. P. Keating, F. Mezzadri. Random matrix theory and entanglement in quantum
spin chains.  Comm. Math. Phys. {\bf 252}  (2004), 543--579


\bi{Kduke} I. V. Krasovsky. Correlations of the characteristic polynomials
in the Gaussian Unitary Ensemble or a singular Hankel determinant. Duke
Math. J. {\bf 139} (2007), 581--619 [math-ph/0411016]

\bi{KVA} A. B. J. Kuijlaars, K. T-R McLaughlin, W. Van Assche,
M. Vanlessen. The Riemann-Hilbert approach to strong asymptotics for
orthogonal polynomials on $[-1,1]$.
Adv. Math. {\bf 188} (2004), 337--398 [math.CA/011125]

\bi{L} A. Lenard. Momentum distribution in the ground state of
the one-dimensional system of impenetrable bosons. J. Math. Phys.
{\bf 5} (1964) 930--943;
A. Lenard. Some remarks on large Toeplitz determinants.
Pacific J. Math. {\bf 42} (1972), 137--145


\bi{MMS1}
A. Mart\'inez-Finkelshtein,
K. T.-R. McLaughlin, E. B. Saff. Szeg\H o orthogonal
polynomials with respect to an analytic weight: canonical representation and
strong asymptotics.  Constr. Approx. {\bf 24}  (2006),  319--363.

\bi{MMS2}
A. Mart\'inez-Finkelshtein,  K. T.-R. McLaughlin, E. B. Saff.
Asymptotics of orthogonal polynomials with respect to an analytic weight
with algebraic singularities on the circle.  Int. Math. Res. Not.  2006,
Art. ID 91426, 43 pp.



\bi{Ovchinnikov} A. A. Ovchinnikov. Fisher-Hartwig conjecture and the
correlators in the inpenetrable Bose gas.
Phys. Lett. A {\bf 373} (2009), 305--307.

\bi{Simon} B. Simon. Orthogonal polynomials on the unit circle.
AMS Colloquium Publications 2005

\bibitem{Szego} G. Szeg\H o. Orthogonal polynomials. AMS Colloquium
  Publ. {\bf 23}. New York: AMS 1959



\bi{Weyl}
H. Weyl. The classical groups, Princeton University Press, Princeton, 1946

\bi{WW} E. Whittaker, G. Watson. A course of modern analysis,
Cambridge, 1969.

\bibitem{W} H. Widom. Toeplitz determinants with singular generating
functions.
Amer. J. Math. {\bf 95} (1973), 333--383

\bibitem{Warc} H. Widom. The strong Szeg\H o limit theorem for
 circular arcs.  Indiana Univ. Math. J. {\bf 21} (1971), 277--283




\end{thebibliography}
\end{document}